\setlist{nolistsep}
\newtheorem{theo}{Theorem}
\numberwithin{equation}{section}
\newtheorem{theorem}{Theorem}[section]
\newtheorem{theorem*}{Main Result}
\newtheorem{lem}[theorem]{Lemma}
\newtheorem{coro}[theorem]{Corollary}
\newtheorem{cor*}[theorem*]{Corollary}
\newtheorem{prop}[theorem]{Proposition}
\theoremstyle{definition}
\newtheorem{defn}[theorem]{Definition}
\newtheorem{defs}[theorem]{Definitions}
\newtheorem{defsfacs}[theorem]{Definitions \& Facts}
\newtheorem{remark}[theorem]{Remark}
\numberwithin{theorem}{section}
\def\<{\langle}
\def\>{\rangle}
\newcommand{\Res}{\mathsf{Res}}
\newcommand{\proj}{\mathsf{proj}}
\newcommand{\PGL}{\mathsf{PGL}}
\newcommand{\PSL}{\mathsf{PSL}}
\newcommand{\PG}{\mathsf{PG}}
\newcommand{\K}{\mathbb{K}}
\newcommand{\F}{\mathbb{F}}
\newcommand{\N}{\mathbb{N}}
\keywords{Classical geometries, Grassmannians, polar spaces}
\subjclass{51E24 (primary), 51A05, 51A50 (secondary)}
\begin{document}
\title{A characterisation of lines in finite Lie incidence geometries of classical type}
\author{Sira Busch \and Hendrik Van Maldeghem}
\address{Sira Busch\\ Department of Mathematics, M\"unster University, Germany}
\email{s\_busc16@uni-muenster.de}
\address{Hendrik Van Maldeghem \\ Department of Mathematics, statistics and Computer Science, Ghent University, Belgium} \email{Hendrik.VanMaldeghem@UGent.be}
\thanks{The first author is funded by the Claussen-Simon-Stiftung and by the Deutsche Forschungsgemeinschaft (DFG, German Research Foundation) under Germany's Excellence Strategy EXC 2044 --390685587, Mathematics M\"unster: Dynamics--Geometry--Structure. This work is part of the PhD project of the first author.}

\begin{abstract}
We consider any classical Grassmannian geometry $\Gamma$; that is, any projective or polar Grassmann space. Suppose every line in $\Gamma$ contains $s+1$ points. Then we classify all sets of points in $\Gamma$ of cardinality $s+1$, with the property, that no object of opposite type in the corresponding building, is opposite every point of the set. It turns out that such sets are either lines, or hyperbolic lines in symplectic residues, or ovoids in large symplectic subquadrangles of rank~2 residues in characteristic~2. This is a far-reaching extension of a famous and fundamental result of Bose \& Burton from the 1960s. We describe a new way to classify geometric lines in finite classical geometries and how our results correspond to blocking sets.
\end{abstract}
\maketitle

\section{Introduction}

A notion of great interest in finite geometry are \emph{blocking sets}. They have applications in the theory of communication systems, coding theory and cryptography (see \cite{Holder:01}). Originally, blocking sets were defined for projective planes as a set of points in the plane that every line intersects. More generally, a blocking set can be defined as a set of points in a projective space that every hyperplane intersects. In this context, instead of talking about the cardinality of a blocking set, we will talk about its \emph{size}, but we will mean the same. The fundamental starting point to investigate the structure of blocking sets for projective planes is a result of Bose \& Burton \cite{Bos-Bur:66} that states, in modern terminology, that for a projective plane $\Gamma$, in which every line has exactly $c$ points, any blocking set of size at most $c$ in $\Gamma$ has size exactly $c$ and is a line. In this article, we want to generalise the notion of a blocking set to all \emph{finite classical geometries} $\Gamma$ and classify those that contain the same number of points as a line in $\Gamma$. We will use the fact that every such geometry corresponds to a \emph{spherical building} and we will talk more about that shorty. Using buildings, the result of Bose \& Burton can be rephrased as follows: For a projective plane $\Gamma$, in which every line has exactly $c$ points, any set of at most $c$ points that does not admit a common opposite, contains exactly $c$ points and is a line. We will now state the main result of this article.

\smallbreak

\textbf{Main Result A.} \emph{For a finite classical geometry $\Gamma$, in which every line has exactly $c$ points, any set of at most $c$ points that does not admit a common opposite, contains exactly $c$ points and is either a line, a hyperbolic line in a symplectic residue, or an ovoid in a proper ideal symplectic subquadrangle in characteristic~$2$ of a residue.}

\medbreak

Knowing the goal, we will now proceed to shed some light on the most relevant concepts that we will use. First, we will specify what we mean by \emph{finite classical geometries}. In its simplest form, a \emph{(point-line) geometry} consists of a set of \emph{points} and a set of subsets of this point set, called the set of \emph{lines}. The ``prototypes'' of point-line geometries are \emph{projective planes}, which can be defined axiomatically as point-lines geometries that satisfy three  axioms, namely, 
\begin{enumerate}[label=(\roman*)]
\item every pair of distinct points is contained in a unique line,
\item every pair of distinct lines has a unique point in common,  
\item and there exist four points of which no three are contained in a common line. 
\end{enumerate}More generally, we define a \emph{projective space} as a point line geometry, where
\begin{enumerate}[label=(\roman*)]
\item the points are the $1$-dimensional subspaces (we will say \emph{$n$-spaces} for $n$-dimensional subspaces in the following) of a vector space $V$ of dimension $n$ at least $3$, 
\item and the lines are the $1$-spaces contained in a given $2$-space. 
\end{enumerate}
This point-line geometry is denoted by $\PG(V)$ and said to have dimension $\dim V-1$. For $n=3$, we obtain examples of projective planes as defined above. 

\smallbreak

The automorphism group of $\PG(V)$ is the group of permutations of the point set, which map each line bijectively to some other line and is denoted by $\mathsf{P\Gamma L}(V)$.  It can be obtained from the group $\mathsf{\Gamma L}(V)$ of bijective semi-linear transformations of $V$ by factoring out the centre. Therefore, a projective space is a suitable object for studying the groups $\mathsf{P\Gamma L}(V)$, $\PGL(V)$ and $\PSL(V)$ (which are subgroups of $\mathsf{P\Gamma L}(V)$). 

\smallbreak

These groups are so-called \emph{classical groups}, and one might wonder, whether other classes of classical groups also admit a ``nice'' point-line geometry on which they act as automorphism groups. This gave rise to the notion of a \emph{polar space} by work of Veldkamp \cite{Vel:59-60} and Tits \cite{Tits:74} (we will define polar spaces in \cref{polarspace}).  Subsequently, Buekenhout \& Shult \cite{Bue-Shu:74}  found a simple axiom system that characterised polar spaces as point-line geometries (the axioms of Veldkamp and Tits assume the existence of projective spaces as substructures). It turns out that projective and polar spaces are the natural point-line geometries of what is nowadays called the \emph{classical groups of Lie type}. The axiom system of Tits is deduced from his more general notion of a \emph{spherical building}, which he introduced to also capture the exceptional groups of Lie type. The exact definition of a spherical building is of no importance to us (we refer to \cite{Abr-Bro:08,Tits:74}); it suffices, for this article, to know that it is a numbered simplicial complex (i.e. the vertices have types), and that there is a standard procedure to construct a point-line geometry from the set of vertices of any given type. This way, the spherical buildings that give rise to projective and polar spaces, also define other point-line geometries. 

\medbreak

If $j$ is the type of the vertices that are chosen as points, then one refers to such a geometry as the \emph{$j$-Grassmannian} of the associated projective or polar space (for details see for example \cite{Mal:24}). Usually one assigns the type $j$ to the vertices that correspond to the projective subspaces of the polar or projective space that are isomorphic to $\PG(V)$, with $\dim V=j$. The $1$-Grassmannians are simply the projective and polar spaces themselves. We will refer to these $j$-Grassmannian geometries as the \emph{classical geometries} in the following. A classical geometry is called \emph{finite}, if every line has a finite number of points. For projective and polar spaces this already implies that every line has the same number of points.

\bigbreak

A special feature in the theory of spherical buildings is the notion of \emph{opposition}. When translated to projective and polar spaces, we get the following.
\renewcommand\labelitemi{\tiny$\bullet$}
\begin{itemize}
\item Two subspaces of a projective space are opposite if, and only if, they are complementary \\(that is, they are disjoint and together they generate the whole projective space). 
\item Two projective subspaces inside a polar space are opposite if, and only if, no point of their union is collinear to all points of that union. 
\end{itemize}
This opposition relation induces an opposition relation on the types of the vertices of the corresponding spherical building. In a projective space $\PG(V)$ with $\dim V=n$, subspaces of type $j$ are opposite subspaces of type $n-j$. In polar spaces, the opposition relation on the types is the identity (or almost the identity.\footnote{Certain polar space admit a so-called \emph{oriflamme geometry} and then the opposition relation is slightly different. For details we refer to \cref{oriflam}.}) Hence, in particular, we can speak of \emph{subspaces (or objects) of opposite type}.  

\medbreak

The central question of \cite{Kas-Mal:13} is, whether (perhaps partial) knowledge of this opposition relation determines the lines of any classical geometry. This lead the authors of \cite{Kas-Mal:13} to the notion of a \emph{geometric line}, which is a set of points in a classical geometry, such that each object of opposite type is \emph{not} opposite
\begin{itemize}
\item[--]  either all points,
\item[--] or exactly one point 
\end{itemize}
of the set. 
Such sets are classified in \cite{Kas-Mal:13} for all classical geometries. In the particular case of a projective plane $\Gamma$,  lines are opposite points and here, a geometric line is a set of points, such that each line either (a) contains that set, or (b) has a unique point with it in common. 
\smallbreak
Suppose $\Gamma$ is a finite projective plane. Then each line has a constant number of points, say $c$. Condition (a) implies that a geometric line has at most $c$ points, and (a) and (b) together imply that each line intersects a geometric line in at least one point, or, in other words, no line is opposite all points of the geometric line. The latter condition is, again, the defining property of blocking sets in projective planes. It turns out that in projective planes (and also in projective spaces) lines and geometric lines are just equivalent.
\smallbreak

In the finite case, the definition of a blocking set of a projective plane (and also of any projective space) with the size of a line is ostensibly weaker than the definition of a geometric line. If we define, for an arbitrary finite classical geometry, a blocking set as a set of points admitting no global opposite, then, again, the definition of a blocking set with the size of a line in a classical geometry is much weaker than that of a geometric line. However, in this article, we show that in the odd characteristic case these notions are the same --- in characteristic~$2$ there are exceptions. This is the content of a second main result.
 \smallbreak
\textbf{Main Result B.} \emph{In a finite classical geometry $\Gamma$ with $c$ points on each line, and with $c$ even, a set of at most $c$ points is 
a geometric line 
 if, and only if, no object of the opposite type of a point is opposite each point of the set.}

\medbreak

The condition on $c$ can be lifted if $\Gamma$ is neither the $n$-Grassmannian of the polar space of rank $n\geq 2$ associated to an elliptic quadric, nor the $(n-1)$-Grassmannian of a small Hermitian polar space of rank $n\geq 2$  (see \cref{defpolarspaces}). 

\bigbreak

We can also motivate our main results slightly differently as follows. It is known that, in a polar space with $s+1$ points per line, any set of $s$ points admits an opposite point (this is Exercise 2.13 in \cite{Mal:24}). That means there exists a point, which is non-collinear to any given set of at most $s$ points. This is not true for $s+1$, as lines are counterexamples. In this minimal case, it is natural to ask for a classification of all counterexamples. This can be phrased as a Segre-like or extremal problem as follows: If a set $T$ of points does not admit a common opposite, then $|T|$ is at least $s+1$; what happens, if equality occurs? 
\medbreak

Yet another motivation, which in fact initiated this work, was a problem arising in \cite{Bus-Sch-Mal:25}. In order to decompose an arbitrary sequence of perspectivities into a sequence of perspectivities of a certain prescribed type, we had to find a point opposite four arbitrary given points of a certain Lie incidence geometry. This is easy, if the ground field has size at least $4$, but over the field $\F_3$, this problem gave rise to exactly the question sketched in the previous paragraph with $s=3$. The solution for $s=3$ is not much simpler than the general case, and so we proceeded to answer this question in full generality for all possible classical geometries. In a subsequent paper \cite{Bus-Mal:25}, we treat some exceptional geometries. The reason to not include the exceptional geometries in the present paper is that these geometries require different methods, the framework of parapolar spaces and much deeper understanding about Lie incidence geometries.

\medbreak

Lastly, we would like to mention some work of Cohen \& Cooperstein \cite{Coh-Coo:98}. Let $\Gamma$ be a point-line geometry. A line $L$ of $\Gamma$ is called a \emph{full line} of $\PG(V)$, if the points of $L$ coincide exactly with all points of some (unique) line of $\PG(V)$. A \emph{full projective embedding} of $\Gamma$ is a representation of $\Gamma$ as a set of points of $\PG(V)$, for some vector space $V$, such that the lines of $\Gamma$ are full lines of $\PG(V)$. In \cite{Coh-Coo:98}, Cohen \& Cooperstein describe all full embeddings of classical geometries $\Gamma$ in $\PG(V)$, which contain sets of points that form a line in $\PG(V)$, but that do not form a line in $\Gamma$. The results of \cite{Kas-Mal:13} show that these sets of points are precisely the geometric lines. 


\section{Preliminaries}

We will briefly recall the most important features of finite polar spaces (having given the definition of projective spaces in the previous section) and refer to standard textbooks such as \cite{Bue-Coh:13,Shu:11,Tits:74,Mal:24} and the chapter \cite{Coh:95} in the Handbook of Incidence Geometry for more background; also \cite{Cam:92} and Chapter~2 of \cite{Bro-Mal:22} contain a detailed introduction to the finite case. 

\subsection{Polar spaces}

First of all, we use standard terminology and notation concerning point-line geometries: points $p$ and $q$ contained in a line $L$ are called \emph{collinear}, and we write $p\perp q$, and, if $p\neq q$, $L=pq$ (in all our geometries lines are determined by an pair of their points). The set of points collinear to a point $p$ is denoted by $p^\perp$. For a set $S$ of points, the set of points collinear to each member of $S$ is denoted as $S^\perp$. A \emph{subspace} $D$ is a set of points, with the property, that each line containing two distinct points of $D$, is entirely contained in $D$. A subspace is \emph{singular}, if each pair of its points is collinear. 

\begin{defn}[\textbf{Polar space}]\label{polarspace}
A \emph{polar space} $\Gamma$ is a point-line geometry, such that
\begin{enumerate}[label=(\roman*)]
\item  $\Gamma$ contains at least one line,
\item every line in $\Gamma$ contains at least three points,
\item  for every point $p$ and every line $L$ in $\Gamma$, either exactly one, or all points of $L$ are collinear to $p$,
\item and no point is collinear to every other point.
\end{enumerate}
\end{defn}
In the literature, polar spaces satisfying (iv) are sometimes called \emph{non-degenerate}.

\begin{defs}[\textbf{Rank}, \textbf{maximal singular subspaces}, \textbf{generalised quadrangles}]
If in a given polar space $\Gamma$, there do not exist three mutually collinear points that are not contained in a common line, we say that the \emph{rank of $\Gamma$ is $2$}. If there exists a natural number $r\geq 3$, such that no singular subspace is isomorphic to an $r$-dimensional projective space, but there exist singular subspaces isomorphic to an $(r-1)$-dimensional projective space, then we say that the \emph{rank of $\Gamma$ is $r$}. The singular subspaces isomorphic to $(r-1)$-dimensional projective spaces are then also called \emph{maximal singular subspaces} (sometimes they are also referred to as \emph{generators}). Every other singular subspace is contained in a maximal one and is, consequently, a projective space. The singular subspaces isomorphic to $(r-2)$-dimensional projective spaces are called \emph{submaximal}. Polar spaces of rank $2$ are also called \emph{generalised quadrangles} (see \cite{FGQ} for background on the finite case).  
\end{defs}

\begin{defsfacs}[\textbf{Order}]
We say that a finite polar space of rank $r\geq 2$ \emph{has order} $(s,t)$, if every line carries exactly $s+1$ points, and every submaximal subspace is contained in exactly $t+1$ maximal singular subspaces. A projective space has \emph{order $s$}, if every line contains exactly $s+1$ points. In that case, and if it is classical, it is defined over the finite field with $s$ elements. Every singular subspace of a polar space of order $(s,t)$ is a projective space of order~$s$.
\end{defsfacs}

\begin{defn}[\textbf{Ideal subquadrangle}]\label{idealsub}
A \emph{subquadrangle} $\Gamma'$ of a generalised quadrangle $\Gamma$ is a subset $X$ of points which, endowed with the intersections of $X$ with the lines of $\Gamma$ containing at least two points of $X$, is a generalised quadrangle.  If $\Gamma$ has order $(s,t)$ and $\Gamma'$ has order $(s',t)$, then we say that $\Gamma'$ is an \emph{ideal} subquadrangle. 
\end{defn}

\begin{defs}[\textbf{Grassmannian}, \textbf{dual polar space}]
For a finite polar space $\Gamma$ of rank $r\geq 2$ and a positive integer $i$, $1\leq i\leq r$, the $i$-Grassmannian is the geometry with point set the set of singular subspaces of dimension $(i-1)$, and the lines are the sets of such subspaces containing a given singular subspace $U$ of dimension $i-2$ (a subspace of dimension $0$ being just the empty set) and contained in a given singular subspace $W\supseteq U$ of dimension $i$ (the second condition is deleted if $i=r-1$). The $r$-Grassmannian geometry is usually called a \emph{dual polar space}. 
\end{defs}

In the following, let $V$ be a vector space over an arbitrary field $\K$. If $\K$ is the finite field with $q$ elements and has dimension $n+1$, then we denote $\PG(V)$ as $\PG(n,q)$. We assume that the reader is familiar with bilinear, quadratic and Hermitian forms. 
\begin{defs}[\textbf{Quadrics, isotropic vectors, polar spaces associated to forms}]\label{defpolarspaces} \

\begin{itemize}
\item The projective null set of a quadratic form will be called a \emph{quadric}; it is non-degenerate, if the null set is disjoint from the radical of the associated symmetric bilinear form, up to the trivial zero vector. 
\item Let $f$ be a symmetric, alternating bilinear or Hermitian form on $V$. We call a vector $v$ an \emph{isotropic vector with respect to $f$}, if $f(v,v)=0$. 
\item By the \emph{polar space associated to $f$}, we will mean a polar space $\Gamma$, such that
\begin{itemize}
\item[$\cdot$] for the isotropic vectors $v$ with respect to $f$, the $1$-spaces $\<v\>$ of $V$, viewed as points in $\PG(V)$, define the points of $\Gamma$,
\item[$\cdot$] the lines of $\Gamma$ correspond to the totally isotropic $2$-spaces -- that is, $2$-spaces $\<v,w\>$, such that $f(v,v)=f(v,w)=f(w,w)=0$ --- assuming these exist. 
\end{itemize}
\end{itemize}
\end{defs}
A non-degenerate quadric containing lines, also defines a polar space. If the characteristic of $\K$ is not equal to $2$, this polar space coincides with the one associated to the corresponding symmetric bilinear form. We refer to \cite[Chapter~3]{Mal:24} for an elementary treatment. 
\begin{defs} \
\begin{itemize}
\item A \emph{symplectic} (\emph{Hermitian}) polar space is the polar space associated to a non-degenerate alternating (Hermitian) form, or, equivalently, to a symplectic (unitary) polarity of a projective space. 
\item A \emph{parabolic quadric} is a non-degenerate quadric in a projective space of even dimension at least $4$ (remember we restrict to the finite case here). 
\item An \emph{elliptic quadric} is a non-degenerate quadric in a projective space of odd dimension $2n+1\geq3$ (remember we restrict to the finite case here) not containing subspaces of dimension $n$.  
\item A \emph{hyperbolic} polar space is a polar space arising from a non-degenerate hyperbolic quadric, that is, a non-degenerate quadric in $\PG(2n-1,q)$ of rank $n+1$ (so, maximal singular subspaces have projective dimension $n$). These polar spaces have order $(q,1)$, and their maximal singular subspaces fall into two natural classes. Two maximal singular subspaces $M,M'$ belong to the same class if and only if their intersection has even codimension in both, that is, $\dim M-\dim(M\cap M')\in2\mathbb{Z}$. In this case, the $n$-Grassmannian geometry has lines of size $2$ and we replace it by two \emph{half spin geometries}, which each take one of the natural classes of maximal singular subspaces as point sets, and a generic line is the set of maximal singular subspaces of the given class containing a given singular subspace of dimension $n-3$ (see \cite[Section 6.4 \& 9.4.3]{Mal:24}). 
\end{itemize}
\end{defs}
In the finite case, there are projectively unique Hermitian polar spaces in $\PG(n,q^2)$, for each $n\geq 3$ and $q$ any prime power. If $n$ is even their order is $(q^2,q^3)$, and if $n$ is odd, then their order is $(q^2,q)$. We will refer to the latter as the \emph{small Hermitian polar spaces}. 
\begin{defn}[\textbf{Hyperbolic line}]
A \emph{hyperbolic line} in a polar space $\Delta$ is a set of points collinear to every point collinear to two given non-collinear points.  In other words: It is a set of points of the form $(\{x, y\}^{\perp})^{\perp}=\{x,y\}^{\perp\perp}$, where $x$ and $y$ are two non-collinear points. It is called \emph{large} if it coincides with $\{u,v\}^\perp$, for $u$ and $v$ any two distinct points of $\{x,y\}^\perp$.  
\end{defn}

As mentioned before, a crucial notion for the present paper is that of opposition, and for polar spaces, two singular subspaces are opposite, if no point of either of them is collinear to all points of both of them. We note that it automatically follows that they have the same projective dimension (see \cite[Corollary~1.4.7]{Mal:24}). 

\begin{defn}[\textbf{Regulus}]
In a polar space associated to a quadric, the set of lines meeting two given opposite lines is called a \emph{regulus}. 
\end{defn}

\begin{defn}[\textbf{Ovoid, spread}]
An \emph{ovoid} of a polar space is a set of points with the property that every line contains exactly one point of that set. A \emph{spread} is a set of generators that partitions the point set. It is straight forward that the number of points of an ovoid and the number of lines of a spread of a generalised quadrangle of order $(s,t)$ is equal to $1+st$ (see 1.8.1 of \cite{FGQ}). 
\end{defn}

\subsection{Detailled description of the main results}

We first phrase the assumptions of our main results uniformly in building theoretic terms, and only afterwards we specify them for projective and polar spaces. Concerning the building theoretic notions, we refer to the standard references \cite{Abr-Bro:08,Tits:74} and the excellent survey \cite{Coh:95}. We recall that we see buildings as simplicial complexes in which the maximal simplices are called \emph{chambers} and the submaximal ones (or next-to-maximal ones) \emph{panels}. A building is \emph{thick} (\emph{thin}, respectively), if every panel is contained in at least three (exactly two, respectively) chambers. We will call a panel \emph{$s$-thick}, if it is contained in precisely $s+1$ chambers. An \emph{apartment} is a thin subcomplex containing chambers. A building is called \emph{spherical}, if its apartments have a fine number of vertices. The automorphism group of a single apartment of a building is a Coxeter group, to which a Coxeter diagram can be attached, and we use the Bourbaki labeling of the types \cite{Bou:68}, and for projective and polar spaces, this agrees with how we defined types in the introduction (except for the hyperbolic case). For each thick building $\Delta$, say of type $\mathsf{X}_n$, and each type, say $i\in\{1,2,\ldots,n\}$, there is a unique point-line geometry, in which the points are the vertices of type $i$ of $\Delta$ and the lines are the sets of vertices of type $i$, completing a given panel -- obtained from a chamber by removing the vertex of type $i$ -- to a chamber. This geometry is called a \emph{Lie incidence geometry of type $\mathsf{X}_{n,i}$}.  

\begin{theo} \label{A} If in an irreducible thick finite classical spherical building $\Delta$ of type $\mathsf{X}_n$, for $n\geq 2$, the panels of cotype $\{i\}$ are $s$-thick, then every set $T$ of $s+1$ vertices of type $i$ of $\Delta$ admits a common opposite vertex except in precisely the following six cases.
\begin{compactenum}[$(1)$]
\item The set $T$ is a  line in the corresponding Lie incidence geometry of type $\mathsf{X}_{n,i}$.
\item $\Delta$ is a generalised quadrangle of order $(s,t)$ and $T$ is an ovoid in a subquadrangle of order $(s/t,t)$, or $\Delta$ is a generalised quadrangle of order $(t,s)$ and $T$ is a spread in a subquadrangle of order $(t,s/t)$.
\item  $\Delta$ is the building corresponding to a symplectic polar space of rank at least $3$ and $T$ is a hyperbolic line in the residue of a simplex of type $\{1,2,\ldots,i-1\}$. 
\item $\Delta$ is the building corresponding to a parabolic quadric $\Gamma$ and $T$ is a set of generators of $\Gamma$, such that
\begin{itemize}
\item[$\cdot$] all members of $T$, viewed as a subspaces of $\Gamma$, contain a common singular subspace $U$ of codimension $2$,
\item[$\cdot$] in the residue of $U$, which is a generalised quadrangle associated to a quadric, the set $T$ is a regulus. 
\end{itemize}
\item $\Delta$ is the building corresponding to a small Hermitian polar space of rank at least $3$ in characteristic~$2$, $i=n-1$ and $T$ is an ovoid in a symplectic  subquadrangle with order $(t,t)$ of the residue of a simplex of type $\{1,2,\ldots,n-2\}$.
\item $\Delta$ is the building corresponding to an elliptic quadric of rank at least $3$ in characteristic~$2$, $i=n$ and $T$ is a spread in a symplectic  subquadrangle with order $(s,s)$ of the residue of a simplex of type $\{1,2,\ldots,n-2\}$.
\end{compactenum}
\end{theo}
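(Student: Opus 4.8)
The plan is to recast the statement inside the Lie incidence geometry $\Gamma=\mathsf{X}_{n,i}$, so that $T$ becomes a set of $s+1$ \emph{points} of $\Gamma$ with the property that no object of the opposite type is opposite every one of them, and then to prove by induction on the rank $n$ that $T$ is a line unless one of the configurations~(2)--(6) occurs. Throughout I would work in the underlying projective or polar space, treat the projective Grassmannians $\mathsf{A}_{n,i}$ and the polar Grassmannians (symplectic, parabolic, elliptic, hyperbolic/half-spin, Hermitian) in parallel, and use the quoted fact that any $s$ points already admit a common opposite, so that $s+1$ is exactly the critical cardinality.

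For the base case $n=2$, $\Gamma$ is a projective plane --- where the claim is precisely Bose--Burton and only outcome~(1) survives --- or a generalised quadrangle $\Delta$, say of order $(s,t)$. Here a crude union bound, using that each point of $\Delta$ is non-collinear to exactly $s^2t$ points, settles many parameter ranges, and where it fails the sets $p^\perp$ ($p\in T$) must overlap so heavily that one of two things happens: either two points of $T$ are collinear, whence $T$ is forced onto a line and, since $|T|=s+1$, onto the whole line; or $T$ is a partial ovoid which one shows to be an ovoid of an ideal subquadrangle $\Delta'$ of order $(s/t,t)$. Classifying $\Delta'$ then separates the generic case~(2) from the exceptional one --- the symplectic subquadrangle ideally embedded in a Hermitian generalised quadrangle, which exists only in characteristic~$2$ and reappears inside~(5) and~(6) --- and the dual analysis gives spreads in place of ovoids.

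In the inductive step ($n\ge 3$) the decisive dichotomy is whether all members of $T$, regarded as singular subspaces, pass through a common point $p$ of the underlying space. If they do, their images form a set of $s+1$ points in the residue of $p$, a classical building of rank $n-1$, and I would argue by contraposition: if this image is none of the exceptional configurations, the induction hypothesis gives it a common opposite in the residue, which can be lifted to a common opposite of $T$ in $\Delta$; conversely the exceptional images are precisely those lifting to~(2)--(6), and iterating the reduction produces the descriptions ``residue of a simplex of type $\{1,\dots,i-1\}$'' in~(3) and of type $\{1,\dots,n-2\}$ in~(4)--(6), a short additional argument showing the hyperbolic-line exception persists only when the residue finally reached is symplectic, and the ovoid/spread exception only in characteristic~$2$. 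If instead the members of $T$ share no common point, a common opposite must be built directly: I would choose a point of the underlying space lying in some but not all members of $T$, project from it to drop the rank, and thereby return to a situation the induction already controls; this is also what confirms that the projective Grassmannians $\mathsf{A}_{n,i}$ with $1<i<n$ contribute nothing beyond~(1).

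The main obstacle is twofold. First, the generalised-quadrangle base case --- ruling out every configuration intermediate between a line and an ovoid of an ideal subquadrangle, and then determining exactly which ideal subquadrangles can occur, in particular pinning down the characteristic-$2$ phenomenon behind~(5) and~(6) --- is where essentially all of the combinatorics lives. Second, in the inductive step the ``no common point'' case has to be handled by an explicit construction rather than by reduction, and the bookkeeping there is heaviest for the dual polar spaces and the half-spin geometries, where the oriflamme makes the opposite type differ from $i$ and the two classes of generators must be treated separately.
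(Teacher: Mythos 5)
Your overall architecture (induction on rank, residue reductions, a rank-$2$ base case producing lines and ovoids of ideal subquadrangles) matches the paper's in spirit, but two of your steps fail as stated. The most serious is the case $i=1$ in a polar space of rank at least $3$: there the members of $T$ are themselves points, so your dichotomy ``do all members of $T$ pass through a common point'' is vacuous, and your fallback --- pick a point of the underlying space lying in some but not all members of $T$ and project --- would pick a point of $T$ itself, whose residue does not contain the remaining points of $T$, so no rank reduction is available. This is exactly where the paper has to import an external classification: it shows that a set of $s+1$ points with no common opposite is a \emph{geometric line} in the sense of Kasikova--Van Maldeghem and quotes their Lemmas~4.7 and~4.8 (with a separate ad hoc argument for quadrics). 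Your plan contains no substitute for this step. Relatedly, the claim that a ``crude union bound'' settles many parameter ranges in the generalised-quadrangle base case is false: a point of a quadrangle of order $(s,t)$ is collinear to $s(t+1)$ others, so $s+1$ points cover up to $(s+1)(st+s+1)$ points, which always exceeds the total $(s+1)(st+1)$; the union bound settles nothing, and the actual argument must exploit the fact that two collinear points of $T$ project to the same point of any exterior line, freeing up a point there.

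The second gap is the symplectic case. You propose to ``iterate the reduction'' and dispose of the hyperbolic-line exception by ``a short additional argument,'' but the residue reduction degenerates there: the exceptional configurations of the induction hypothesis reappear at every level and cannot simply be excluded. The paper needs a separate interleaved induction --- the result for $i$-dimensional singular subspaces in rank $r$ uses the result for $(i+1)$-dimensional ones in rank $r$, which uses the full theorem in rank $r-1$, with lines ($i=2$ in the building) handled by a bespoke argument including an explicit coordinate computation for $s=2$. Similarly, the regulus exception~(4) for generators of a parabolic quadric is invisible in your scheme; the paper obtains it by embedding the parabolic quadric in a hyperbolic one and passing to the half-spin geometry (and the rank-$4$ hyperbolic case itself is settled by triality, not by residues). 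You have correctly located the rank-$2$ combinatorics and the characteristic-$2$ subquadrangle phenomenon, but the point case in higher rank and the symplectic/parabolic interplay are where the real work lies, and the proposal does not contain the ideas needed to close them.
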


In Case (2), if $s=t$, then $T$ is either a hyperbolic line in $\Delta$, or in the dual of $\Delta$; the latter is the generalised quadrangle obtained from $\Delta$ by interchanging the points and the lines. 

For projective spaces, this theorem can be stated as follows, with the convention that the empty projective subspace has dimension $-1$. 

\begin{coro}\label{Anmain}
Let $0\leq k<n$ be integers, and let $q$ be a prime power. Let $T$ be a set of $q+1$  $k$-dimensional subspaces of $\PG(n,q)$. If no $(n-k-1)$-space is disjoint from each member of $T$, then there exist a $(k-1)$-space $U$ and a $(k+1)$-space $W$, such that $T$ coincides with the set of $k$-spaces containing $U$ and contained in $W$.
\end{coro}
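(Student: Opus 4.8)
The quickest route is to read off \cref{Anmain} from \cref{A}. The building $\Delta$ attached to $\PG(n,q)$ has type $\mathsf{A}_n$; choosing $i:=k+1$, the vertices of type $i$ are the $k$-dimensional subspaces, every panel of cotype $\{i\}$ lies on exactly $q+1$ chambers, and the type opposite to $i$ in $\mathsf{A}_n$ is $n+1-i=n-k$, i.e.\ the $(n-k-1)$-dimensional subspaces, two subspaces of complementary dimension being opposite exactly when they are disjoint. So the hypothesis of \cref{Anmain} says precisely that $T$ has no common opposite vertex, and since a thick building of type $\mathsf{A}_n$ is neither a generalised quadrangle nor the building of a polar space, none of the exceptional families $(2)$--$(6)$ of \cref{A} occurs; hence $T$ is a line of the Lie incidence geometry $\mathsf{A}_{n,k+1}$, which by definition is the set of $k$-spaces lying between a fixed $(k-1)$-space $U$ and a fixed $(k+1)$-space $W$. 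Granting \cref{A} there is nothing more to do, so what follows is a plan for a direct argument, which is essentially how \cref{A} specialises in this case.

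The plan is an induction on $n$. The base cases are the two extremes: for $k=0$ the hypothesis says the $q+1$ points of $T$ block every hyperplane, so by Bose \& Burton \cite{Bos-Bur:66} they form a line $W$ (with $U=\emptyset$), and dually for $k=n-1$ the $q+1$ hyperplanes of $T$ cover all points and hence form a pencil. For the inductive step with $1\leq k\leq n-2$ I would run two reductions. First, if the members of $T$ do not span $\PG(n,q)$, restrict to their span $\Sigma$: any $(\dim\Sigma-k-1)$-space of $\Sigma$ disjoint from all members of $T$ can be extended by a complement of $\Sigma$ to an $(n-k-1)$-space disjoint from all members of $T$, so the hypothesis survives inside $\Sigma$ and induction on $\dim\Sigma<n$ finishes. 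Second, if all members of $T$ contain a common point $x$, cut by a hyperplane $\Pi$ with $x\notin\Pi$: the traces $A\cap\Pi$, $A\in T$, are $q+1$ distinct $(k-1)$-spaces of $\Pi\cong\PG(n-1,q)$, any common opposite of which lies in $\Pi$ and is therefore already a common opposite of $T$ itself; so by induction the traces form a pencil with vertex $U_0$ and carrier $W_0$, and then $T$ is the pencil with vertex $\langle x,U_0\rangle$ and carrier $\langle x,W_0\rangle$, there being exactly $q+1$ $k$-spaces between them.

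This leaves the residual case where $T$ spans $\PG(n,q)$ (forcing $n\geq k+2$) and $\bigcap_{A\in T}A=\emptyset$, and the whole point is to show this cannot happen, i.e.\ to exhibit an $(n-k-1)$-space disjoint from every member of $T$; by the self-duality of the statement under $k\leftrightarrow n-k-1$ one may also assume $2k+1\leq n$. If two members $A,A'$ of $T$ are disjoint, then $\langle A,A'\rangle$ has dimension $2k+1\leq n$, and joining a $k$-space of $\langle A,A'\rangle$ meeting neither $A$ nor $A'$ with a complement of $\langle A,A'\rangle$ yields an $(n-k-1)$-space missing $A$ and $A'$; since there are two free parameters in this construction and only $q-1$ further members to dodge, a union-bound count produces one missing all of $T$. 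The hard part is the opposite situation, where every two members of $T$ meet: for $k=1$ this forces all the lines into a common plane, contradicting that they span $\PG(n,q)$ with $n\geq 3$, but for $k\geq 2$ a family of pairwise-meeting $k$-spaces with empty total intersection can still span a large space, and excluding it needs a genuine extremal argument — roughly, that a family of $q+1$ $k$-spaces which is not a pencil leaves too many $(n-k-1)$-spaces met by two or more of its members for all $(n-k-1)$-spaces to be met. This is the combinatorial core, and it is exactly what the machinery behind \cref{A} is designed to supply.
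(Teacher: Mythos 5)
Your first paragraph is exactly how the paper obtains this statement: it is presented as the specialisation of \cref{A} to type $\mathsf{A}_n$ with $i=k+1$, and your bookkeeping (panels of cotype $\{i\}$ are $q$-thick, opposition of types $i$ and $n+1-i$ is complementarity of subspaces, cases $(2)$--$(6)$ are vacuous outside the polar types) is correct. However, the paper's proof of \cref{A} in type $\mathsf{A}_n$ \emph{is} a direct proof of this very statement (\cref{An}), so the real content of your proposal is the sketched direct argument, and there it has a genuine gap. Your two reductions (restrict to the span of $T$; quotient by a common point of $T$) are sound, as are the base cases via Bose--Burton and duality; together they even dispose of the situation where every two members of $T$ meet in a $(k-1)$-space, since such a family is classically either a star over a common $(k-1)$-space or contained in a common $(k+1)$-space. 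But in the residual case you split on ``two members disjoint'' (where the union bound is asserted rather than carried out) versus ``all pairs meet'', and for the latter with $k\geq 2$ you explicitly defer to ``the machinery behind \cref{A}'' --- which is circular, because that machinery is precisely \cref{An}.

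The paper closes exactly this case with a different dichotomy, on the dimension of a pairwise intersection rather than on disjointness: either every two members meet in a $(k-1)$-space (handled as above), or some pair $A_1,A_2$ meets in a space of dimension at most $k-2$. In the second case, if the pair is not disjoint (or if $2k+1\leq n-1$), then $k+2\leq\dim\langle A_1,A_2\rangle\leq n-1$, so one can pick a point $x\notin\langle A_1,A_2\rangle$ lying on no member of $T$; in the quotient $\PG(n-1,q)$ at $x$ the images of $A_1$ and $A_2$ still meet in dimension at most $k-2$, so the image of $T$ is not a pencil, and induction produces an $(n-k-1)$-space through $x$ missing every member of $T$. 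Only when $2k+1=n$ and all members are pairwise disjoint does one fall back on a counting argument of the kind you gesture at, and even there the paper runs it in the residue of an external point (as in its $(k,n)=(1,3)$ case: a transversal of $A_1$ and $A_2$ through $x$ misses some $A_3$, so the projected family is not a pencil) rather than by a union bound over pairs $(B,C)$. Without this external-point residue argument your sketch does not close the case of a pair meeting in codimension at least two, which is the combinatorial core you identify but do not supply.
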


For polar spaces we have the following formulation (excluding the rather trivial case of a grid, which corresponds to a reducible building of type $\mathsf{A_1\times A_1}$).

\begin{coro}
Let $\Gamma$ be a polar space of rank $r$ at least $2$ and order $(s,t)$, with $t>1$ if $r=2$. Let $T$ be either\begin{itemize} \item a set of $s+1$ singular subspaces of $\Gamma$ of dimension $k\leq r-2$, or \item only if $t>1$, a set of $t+1$ maximal singular subspaces (and we set $k=r-1$), or \item only if $t=1$, a set of $s+1$ maximal singular subspaces of the same natural class (and again $k=r-1$). \end{itemize} Then there exists a singular subspace of dimension $k$ opposite each member of $T$, except if
\begin{compactenum}[$(i)$]
\item $k\leq r-2$ and all members of $T$ contain a given $(k-1)$-dimensional subspace and are contained in a given $(k+1)$-dimensional singular subspace;
\item $k=r-1$, $\Gamma$ is not hyperbolic and all members of $T$ contain a given $(r-2)$-dimensional subspace;
\item $k\leq r-2$, $\Gamma$ is symplectic, and all members of $T$ contain a given $(k-1)$-dimensional subspace in the residue of which they form a hyperbolic line;
\item $k=r-1$, $\Gamma$ is either parabolic or hyperbolic, and all members of $T$ contain a given $(r-3)$-dimensional subspace in the residue of which they form a regulus;
\item $k=r-2$, $\Gamma$ is a small Hermitian polar space over a field of characteristic~$2$ and all members of $T$ contain a given $(r-3)$-dimensional subspace in the residue of which they form an ovoid in a symplectic subquadrangle of order $(t,t)$;  
\item $k=r-1$, $\Gamma$ is elliptic over a field of characteristic~$2$ and all members of $T$ contain a given $(r-3)$-dimensional subspace in the residue of which they form a spread in a symplectic subquadrangle of order $(s,s)$; 
\item $r=2$ and $T$ is either an ovoid in a subquadrangle of order $(s/t,t)$, or a spread in a subquadrangle of order $(s, t/s)$.  
\end{compactenum}
\end{coro}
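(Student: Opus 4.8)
The plan is to deduce this corollary from \cref{A} by spelling out the dictionary between a polar space and its building. A finite polar space $\Gamma$ of rank $r\geq 2$ and order $(s,t)$, with $t>1$ when $r=2$, is the $1$-Grassmannian of an irreducible, thick, finite, spherical building $\Delta$ of type $\mathsf{C}_r$ (equivalently $\mathsf{B}_r$), or of type $\mathsf{D}_r$ when $\Gamma$ is hyperbolic; here classicality is automatic when $r\geq 3$ and is understood when $r=2$, so \cref{A} applies. A singular subspace of projective dimension $k$ is a vertex of type $i:=k+1$ (for $\mathsf{D}_r$ a maximal singular subspace of a fixed natural class is a vertex of type $r-1$ or $r$), and by the translation of opposition recalled earlier two vertices of type $i$ are opposite in $\Delta$ exactly when the corresponding $k$-dimensional singular subspaces are opposite in $\Gamma$. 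The cotype-$\{i\}$ panels of $\Delta$ are $s$-thick whenever $i\leq r-1$, as well as for $i\in\{r-1,r\}$ in the $\mathsf{D}_r$ case, and are $t$-thick when $i=r$ and $\Delta$ is not of type $\mathsf{D}_r$; this accounts for the three cardinality alternatives $|T|=s+1$, resp.\ $|T|=t+1$, in the statement. Finally, a simplex of type $\{1,\dots,i-1\}$ is a flag of singular subspaces of dimensions $0,1,\dots,i-2$, its residue coincides with the residue of the $(i-2)$-dimensional (equivalently $(k-1)$-dimensional) subspace it spans, and the residue of a classical polar space at a singular subspace is again a classical polar space of the corresponding type (symplectic, Hermitian, parabolic or elliptic) and of rank $r-k$; this matches the residue of the given $(k-1)$-dimensional (or, in (iv)--(vi), $(r-3)$-dimensional) subspace occurring in cases (i)--(vii).

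With the dictionary in hand I would invoke \cref{A}: either $T$ has a common opposite vertex, hence a common opposite $k$-dimensional singular subspace, which is the generic conclusion; or one of the six cases of \cref{A} holds, and it remains to match these to the seven exceptions. Case~(1) of \cref{A} says $T$ is a line of $\mathsf{X}_{n,i}$: for $k\leq r-2$ this is the pencil of $k$-spaces through a $(k-1)$-space inside a $(k+1)$-space, which is (i); for $k=r-1$ with $\Gamma$ not hyperbolic it is the set of maximals through a fixed $(r-2)$-space, which is (ii); for $k=r-1$ with $\Gamma$ hyperbolic a line of the half-spin geometry is, in the residue of the $(r-3)$-space common to its members, a ruling family of the $\mathsf{D}_2$-grid and hence a regulus, giving the hyperbolic branch of (iv). Case~(3) translates literally to (iii) (and can occur only for $i\leq r-1$, since a hyperbolic line needs a residue of rank $\geq 2$); case~(4) is the parabolic branch of (iv); cases~(5) and~(6) become (v) and (vi) once ``residue of a simplex of type $\{1,\dots,n-2\}$'' is read as ``residue of an $(r-3)$-space''; and case~(2), the rank-$2$ situation, gives (vii), the ``spread'' alternative appearing by passing to the dual quadrangle so that $\Gamma$ keeps order $(s,t)$. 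Along the way I would check the numerology, namely that the relevant ovoid, spread, regulus, pencil or hyperbolic line in the residue has exactly $s+1$ (resp.\ $t+1$) elements, using the counting recalled in the preliminaries; this is routine. Since (i)--(vii) are precisely the images of (1)--(6), the word ``precisely'' in \cref{A} also yields that in each of (i)--(vii) there is no common opposite, so the statement is an equivalence.

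The step I expect to be the main obstacle is the careful treatment of the oriflamme case $\mathsf{D}_r$. There the $(r-2)$-dimensional singular subspaces correspond to edges $\{r-1,r\}$ of $\Delta$ rather than to vertices of a single type, so they form no Lie incidence geometry of the form $\mathsf{X}_{n,i}$ and the value $k=r-2$ in the first alternative must be understood as excluded when $\Gamma$ is hyperbolic; moreover, opposition of type-$r$ vertices is the identity for $r$ even but interchanges the two natural classes for $r$ odd, so the conclusion must be phrased so that the opposite of a member of $T$ is a maximal singular subspace of (possibly) the other class, still a ``singular subspace of dimension $k=r-1$''. A second, purely bookkeeping, difficulty is the clash of the symbols $s$ and $t$: in cases~(4), (5), (6) of \cref{A} they denote order parameters of a residue quadrangle rather than the panel thickness of the hypothesis, so the translation must be done carefully --- for instance, for an elliptic quadric $\Gamma$ one applies \cref{A} with its ``$s$'' equal to our $t=q^2$, while the symplectic subquadrangle in (vi) has order $(q,q)=(s,s)$ in the notation of the corollary. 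Once these identifications are pinned down, no further argument is needed.
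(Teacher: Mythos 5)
Your proposal is correct and matches the paper's (implicit) approach exactly: the corollary is presented there as a direct reformulation of \cref{A} for polar spaces, with no separate argument, and your careful unwinding of the building-to-polar-space dictionary (types versus dimensions, panel thickness giving $s+1$ versus $t+1$, residues of type-$\{1,\dots,i-1\}$ simplices as residues of $(k-1)$- or $(r-3)$-spaces, and the oriflamme subtleties for $\mathsf{D}_r$) is precisely what that reformulation requires. Your matching of cases $(1)$--$(6)$ of \cref{A} to exceptions $(i)$--$(vii)$, including splitting case $(1)$ across $(i)$, $(ii)$ and the hyperbolic branch of $(iv)$, and obtaining the spread alternative of $(vii)$ by dualising, is accurate.
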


In the last case, if $s=t$, then $T$ is half of a subquadrangle of order $(1,s)$ or $(s,1)$, respectively. If $s\neq t$, then \cref{ovoidcaseproof} guarantees that $(vii)$ produces examples.  

There is a rather intriguing corollary to our main results --- that we stated as our second main result in the introduction --- which makes a connection with the notion of a geometric line. We restrict ourselves to the case of rank at least $3$.

\begin{theo}\label{B}
 If in an irreducible thick finite classical spherical building $\Delta$ of type $\mathsf{X}_n$, $n\geq 3$, the panels of cotype $\{i\}$ are $s$-thick, $s$ odd, then a set $T$ of vertices of type $i$ of $\Delta$ is a geometric line in the $i$-Grassmannian geometry (of type $\mathsf{X}_{n,i}$) if, and only if, it has size $s+1$ and does not admit a common opposite vertex. The condition on $s$ can be lifted if $\Delta$ does not correspond to either a small Hermitian polar space if $i=n-1$, or an elliptic quadric if $i=n$.
\end{theo}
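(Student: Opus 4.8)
\medskip
\emph{Plan of proof of \cref{B}.}
The plan is to derive \cref{B} by confronting \cref{A} with the classification of geometric lines of classical Lie incidence geometries of rank at least $3$ obtained by Kasikova and Van Maldeghem in \cite{Kas-Mal:13}; no new geometric input is needed, only a careful account of which configurations survive the stated hypotheses.

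For the direction ``$\Rightarrow$'', suppose $T$ is a geometric line. By the very definition of a geometric line, no object of the opposite type is opposite all points of $T$, so $T$ admits no common opposite. To obtain $|T|=s+1$, I would invoke \cite{Kas-Mal:13}: in an irreducible thick spherical building of rank at least $3$, a geometric line of the $i$-Grassmannian is a line, a hyperbolic line inside a symplectic residue, or --- in the Grassmannian of generators of a parabolic or hyperbolic quadric --- a regulus inside a rank-$2$ residue. In the finite case each of these has exactly $s+1$ points: for a line this is the definition of $s$-thickness; a hyperbolic line of a symplectic polar space of order $(s,\cdot)$ spans a projective line all of whose $s+1$ points are isotropic; and a regulus is the set of the $s+1$ transversals of two opposite lines of the relevant rank-$2$ residue. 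Hence $|T|=s+1$.

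For ``$\Leftarrow$'', suppose $|T|=s+1$ and $T$ has no common opposite. By \cref{A}, $T$ is one of the six configurations listed there. Since $n\geq 3$, case $(2)$ cannot occur, as it forces $\Delta$ to have rank $2$. Cases $(5)$ and $(6)$ require $\Delta$ to be the building of a small Hermitian polar space with $i=n-1$, respectively of an elliptic quadric with $i=n$, and in each case the underlying field has characteristic $2$, so that $s$ is even; thus either hypothesis of \cref{B} --- that $s$ is odd, or that $\Delta$ avoids these two exceptional geometries --- rules out cases $(5)$ and $(6)$ as well. In each surviving case, $T$ is a line ($(1)$), a hyperbolic line in the residue of a simplex of type $\{1,\dots,i-1\}$ of a symplectic polar space ($(3)$), or a regulus in the residue of a codimension-$2$ singular subspace of a parabolic (or hyperbolic) quadric ($(4)$); by \cite{Kas-Mal:13} each such set is a geometric line. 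This establishes the equivalence.

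The only genuinely non-formal step, and the one I expect to require the most care, is the identification of the two classifications, namely that the configurations $(1)$, $(3)$, $(4)$ of \cref{A} are precisely the geometric lines listed in \cite{Kas-Mal:13}. This rests on recognising the rank-$2$ residues that occur --- in particular that the residue of a codimension-$2$ singular subspace in the generator-Grassmannian of a parabolic (or hyperbolic) quadric is a symplectic generalised quadrangle, so that a regulus there is, in the Grassmannian sense, a hyperbolic line in a symplectic residue. It is also worth recording explicitly that the characteristic-$2$ configurations $(5)$ and $(6)$ are blocking sets of size $s+1$ that are \emph{not} geometric lines (they do not occur in the classification of \cite{Kas-Mal:13}); this is exactly why the two exceptional families of buildings have to be excluded once $s$ is permitted to be even.
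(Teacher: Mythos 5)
Your proposal is correct and follows essentially the same route as the paper, which compresses the argument into the remark that ``both notions provide almost always the same objects'' and the single line ``Now \cref{ovoidcase} yields \cref{B}'': one compares the list in \cref{A} with the classification of geometric lines in \cite{Kas-Mal:13}, notes that the surviving cases $(1)$, $(3)$, $(4)$ are exactly the geometric lines of size $s+1$, and that the only discrepancies --- the ovoid/spread configurations of cases $(5)$ and $(6)$, which are blocking sets but not geometric lines --- live in characteristic $2$ and are therefore excluded by $s$ odd or by avoiding the two exceptional families. Your write-up simply makes explicit the identification of the two classifications that the paper leaves to the reader.
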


Interestingly, our proofs do not use the notion of a geometric line. The proof of \cref{B} consists of observing that both notions provide almost always the same objects; both after arguing for some pages. Only in \cref{polarpoints}, where we treat the case of points in a polar space, we are able to use the notion of geometric lines to obtain our classification. The proof is not as direct as one could hope. However, we also provide a shorter alternative proof of the same result in the special case, where the polar space is related to a quadric. There, we do not use the notion of geometric lines, see \cref{hyppoints}. 

Another corollary is the following.

\begin{coro}
 If in an irreducible thick finite classical spherical building $\Delta$ of type $\mathsf{X}_n$, $n\geq 2$, the panels of cotype $\{i\}$ are $s$-thick, then every set $T'$ of $s$ vertices of type $i$ of $\Delta$ admits a common opposite vertex.
\end{coro}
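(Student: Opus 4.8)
The plan is to derive this from \cref{A} by a one-step extension argument. Suppose, for contradiction, that some set $T'$ of $s$ vertices of type $i$ admits no common opposite vertex. Since $\Delta$ is thick of rank at least $2$, there are more than $s$ vertices of type $i$ (already a single panel of cotype $\{i\}$ is completed by $s+1$ of them), so we may choose a vertex $x$ of type $i$ with $x\notin T'$. Then $T:=T'\cup\{x\}$ has exactly $s+1$ vertices, and, being a superset of $T'$, it still has no common opposite. Hence \cref{A} applies and $T$ is one of the six configurations $(1)$--$(6)$ listed there. The proof is then completed by the following claim, applied with $y=x$: \emph{for each of the configurations $(1)$--$(6)$ and each of its vertices $y$, the set of the remaining $s$ vertices admits a common opposite vertex.}

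I would verify this claim one configuration at a time. The configurations of cases $(3)$, $(4)$, $(5)$ and $(6)$ all live inside the residue $\mathcal{R}$ of a suitable simplex $\sigma$ --- a thick spherical building of strictly smaller rank, which is a generalised quadrangle in cases $(4)$, $(5)$ and $(6)$ --- and, exploiting the behaviour of opposition with respect to residues, an object of $\mathcal{R}$ that is opposite inside $\mathcal{R}$ to the $s$ remaining vertices can be completed to an object of $\Delta$ opposite to all of them. This reduces these cases to the corresponding statement for a hyperbolic line, a regulus, or an ovoid/spread in an ideal subquadrangle, now inside a building of rank $2$ (or $3$). So it remains to treat, inside such a small building: a line; a hyperbolic line of a symplectic polar space; a regulus of a quadric; and an ovoid (dually, a spread) of an ideal subquadrangle.

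For a line $L$, the claim asks for an object of the opposite type that is opposite to every point of $L$ except $y$, equivalently for an object whose projection to the rank-$1$ residue $L$ equals the single point $y$; the existence of such an object is a standard fact about lines (it is the statement, inherent in the theory of geometric lines, that each point of a line occurs as such a projection), so this case is immediate. For the remaining, rank-$2$ configurations one argues in the generalised quadrangle directly. For example, if $\mathcal{O}$ is an ovoid of an ideal subquadrangle $\Gamma'$ of order $(s/t,t)$ inside $\Delta$ of order $(s,t)$, then after deleting $p_0\in\mathcal{O}$ one takes a point $z$ on a line of $\Gamma'$ through $p_0$ but outside $\Gamma'$ (these exist since $t>1$), and one checks that $z$ is non-collinear in $\Delta$ to every other point of $\mathcal{O}$: if $z$ were collinear to $q\in\mathcal{O}\setminus\{p_0\}$, then, since $p_0$ and $q$ are non-collinear in $\Gamma'$ --- hence in $\Delta$, as collinearity is inherited --- the unique point of that line collinear with $q$ would be forced to lie in $\Gamma'$, contradicting $z\notin\Gamma'$. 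The hyperbolic-line, regulus and characteristic-$2$ cases are handled by analogous but configuration-specific computations, resting on the facts about ovoids, spreads, reguli and hyperbolic lines recalled in the Preliminaries.

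The only genuinely non-formal part is this last step. The extension argument, the line case and the ovoid (and spread) cases are routine; the real obstacles are making the residue reductions precise --- the interaction of opposition with residues --- and the regulus case $(4)$ together with the characteristic-$2$ Hermitian and elliptic cases $(5)$ and $(6)$, where one must invoke the specific structure of the quadric, respectively of the symplectic subquadrangle, and no mere counting argument will do.
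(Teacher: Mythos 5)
Your proposal is correct in strategy, but it takes a genuinely different route from the paper. The paper's proof runs in the opposite direction: it completes $T'$ to an $(s+1)$-set $T$ by \emph{choosing} the added vertex so that $T$ is \emph{not} one of the six exceptional configurations of \cref{A} (each exceptional configuration has exactly $s+1$ elements and is essentially determined by any $s$ of them, so only very few completions are bad), whence $T$, and a fortiori $T'$, admits a common opposite. You instead add an \emph{arbitrary} vertex, accept that the resulting $T$ may be exceptional, and then prove the stronger assertion that every exceptional configuration minus any one of its vertices still admits a common opposite. This costs you a case analysis through all six configurations --- the residue reductions, the line case via projection onto a panel, your ovoid argument (which is correct, and essentially re-proves the last sentence of \cref{ovoidcaseproof}), and the regulus and characteristic-$2$ cases that you only sketch --- where the paper gets away with one sentence. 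What your approach buys in exchange is the sharper statement that the exceptional sets are \emph{minimal} blocking sets, i.e.\ every proper subset of each of them does admit a common opposite; this is implicit in the paper's framing but not actually proved there. Conversely, note that the paper's one-line argument also silently requires a small verification (that not \emph{every} completion of $T'$ is exceptional), so neither route is entirely free of detail; yours simply front-loads more of it.
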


\begin{proof}
We can always complete $T'$ to a set $T$ of $s+1$ vertices by adding a vertex such that $T$ is not a set as in the conclusion of \emph{\cref{A}}. 
\end{proof}

\begin{remark}
There is a notion of \emph{split building}, which is essentially the building associated to a Chevalley group (or split group of Lie type, using the terminology of \cite{Car:89}), see Chapter 15 of \cite{Car:89}. Without defining this in general, we mention that, in the finite classical case, this concerns the projective spaces, the symplectic polar spaces (which are then said to be of type $\mathsf{C}_n$), the parabolic polar spaces (type $\mathsf{B}_n$) and the oriflamme complexes of hyperbolic polar spaces (type $\mathsf{D}_n$; see \cref{oriflam} for a precise definition). These have not just a Coxeter diagram attached, but a \emph{Dynkin diagram}, where nodes correspond to fundamental roots of a root system. Then the cases in the conclusions of \cref{A}, where the set $T$ is not a line in the Lie incidence geometry of type $\mathsf{X}_{n,i}$, occur precisely when $i$ represents a short root in the root system corresponding to the Dynkin diagram. This behaviour will sustain in the exceptional case (see \cite{Bus-Mal:25}). 
\end{remark}


\section{Proofs of the main results}

First, we recall the following extension of Theorem~3.30 of \cite{Tits:74}. For a proof, see Proposition 8.2 of \cite{Bus-Sch-Mal:25}. 

\begin{prop}\label{3.30}
If every panel of a spherical building is contained in at least $s+1$ chambers, then every set of $s$ chambers admits an opposite chamber.  
\end{prop}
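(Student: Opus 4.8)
The plan is to translate the statement into the combinatorics of the $W$-metric on $\Delta$ and then argue by a ``one-panel move'' induction. Write $(W,S)$ for the Coxeter system of $\Delta$, which is finite because $\Delta$ is spherical, let $\delta$ denote the Weyl distance between chambers, and let $w_0$ be the longest element of $W$; recall that two chambers are opposite precisely when their Weyl distance equals $w_0$. I would then prove, by induction on $m$ for $0\le m\le s$, that any $m$ chambers of $\Delta$ admit a common opposite chamber; the case $m=0$ is vacuous and $m=s$ is the assertion, so only the inductive step matters. A naive counting argument is not available here: for small panel thickness the chambers opposite a fixed chamber need not form a majority, so one cannot simply union-bound.

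For the inductive step, fix chambers $C_1,\dots,C_{m+1}$ with $m+1\le s$ and, by the induction hypothesis, a chamber $D$ opposite all of $C_1,\dots,C_m$; I may assume $D$ is not opposite $C_{m+1}$, so $w:=\delta(D,C_{m+1})\ne w_0$. The idea is to slide $D$ inside a single panel, chosen so that the move brings $D$ strictly closer (in Weyl length) to being opposite $C_{m+1}$ without destroying any opposition already in hand. Since $w\ne w_0$, some generator $s_i$ satisfies $\ell(s_iw)=\ell(w)+1$; let $\pi$ be the panel of type $i$ through $D$. The choice of $i$ makes $D$ the gate $\proj_\pi C_{m+1}$, so every chamber $D'\in\pi\setminus\{D\}$ has $\delta(D',C_{m+1})=s_iw$, of strictly greater length. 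At the same time, because $\delta(D,C_j)=w_0$ has a descent at every generator, $D$ is \emph{not} the gate $\proj_\pi C_j$ for any $j\le m$; hence inside $\pi$ exactly one chamber fails to be opposite $C_j$, namely $\proj_\pi C_j$, and it is not $D$. Consequently the chambers of $\pi$ to be avoided — namely $D$ together with the at most $m$ chambers $\proj_\pi C_j$ — number at most $m+1\le s$, while $\pi$ has at least $s+1$ chambers; so there is $D'\in\pi$, distinct from $D$, opposite every $C_1,\dots,C_m$, and with $\ell(\delta(D',C_{m+1}))=\ell(w)+1$.

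To finish, I would iterate this move, replacing $D$ by $D'$ at each step: the quantity $\ell(\delta(D,C_{m+1}))$ strictly increases and is bounded by $\ell(w_0)$, so after finitely many steps we reach a chamber opposite every one of $C_1,\dots,C_{m+1}$, completing the induction. The step I expect to be most delicate is the bookkeeping in the panel move — verifying that sliding $D$ within $\pi$ cannot break an already-achieved opposition. This is exactly where sphericity is used twice: once so that $w_0$ exists at all, and once through the fact that $w_0$ has a descent at every generator, which is precisely what guarantees that $D$ is never the gate of $\pi$ toward an already-opposite $C_j$, so that all but one chamber of $\pi$ remain opposite $C_j$. Everything else reduces to the gate property of panels and the elementary fact that any $w\ne w_0$ admits an ascending generator.
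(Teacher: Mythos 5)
Your argument is correct. Note that the paper gives no proof of \cref{3.30}, deferring instead to Proposition~8.2 of \cite{Bus-Sch-Mal:25}; your gate-and-panel induction --- choosing an ascent $s_i$ of $w=\delta(D,C_{m+1})$ so that $D$ is the gate of its $i$-panel $\pi$ towards $C_{m+1}$, while the fact that $w_0$ descends at every generator guarantees that each already-opposite $C_j$ rules out only the single chamber $\proj_\pi C_j\neq D$, leaving at most $m+1\leq s$ forbidden chambers among the at least $s+1$ in $\pi$ --- is the standard proof of such statements and is, in substance, what the cited proposition carries out. The only cosmetic point is that the base case $m=0$ is not vacuous but amounts to the existence of a chamber, which holds by definition of a building.
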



\subsection{Projective spaces---Type $\mathsf{A}_n$}
The following lemma proves \cref{A} for buildings of type $\mathsf{A}_n$, that is, for projective spaces and its Grassmannians.  
\begin{lem}\label{An}
If no $(n-k-1)$-space is disjoint from each member of a set $T$ of $q+1$ projective $k$-spaces of $\PG(n,q)$, then there exist a $(k-1)$-space $U$ and a $(k+1)$-space $W$ such that $T$ coincides with the set of $k$-spaces containing $U$ and contained in $W$.
\end{lem}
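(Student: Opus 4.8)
The plan is to strip away two kinds of degeneracy by induction on $n$, reducing everything else to the theorem of Bose \& Burton. Three preliminary observations. First, $\dim U=k-1$ and $\dim W=k+1$ give $\dim(W/U)=1$, so a pencil $[U,W]$ consists of exactly $q+1$ $k$-spaces; hence it suffices to produce a $(k-1)$-space $U$ and a $(k+1)$-space $W$ with $T\subseteq[U,W]$, as then $T=[U,W]$. Second, two subspaces of $\PG(n,q)$ whose dimensions sum to $n-1$ are disjoint if and only if they are complementary, i.e.\ opposite; so the hypothesis says precisely that $T$ has no common complement. Third, extending each member of any $q$ $k$-spaces to a chamber of the building of type $\mathsf{A}_n$ (all of whose panels are $q$-thick) and applying \cref{3.30} yields a chamber opposite all of them, whose vertex of type $n-k$ is a common complement; thus \emph{any $q$ $k$-spaces of $\PG(n,q)$ admit a common complement}, so the situation $|T|=q+1$ is extremal.

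Next, the two reductions. If $R:=\bigcap T$ has dimension $r\ge0$, project from $R$: the images of the members of $T$ are $q+1$ distinct $(k-r-1)$-spaces of $\PG(n-r-1,q)$, and if a subspace of complementary dimension in the quotient missed all of them, then a complement of $R$ inside its preimage would miss all of $T$; so the images again have no common complement, by induction on $n$ they form a pencil, and this lifts to a pencil containing $T$. Dually, if $W_0:=\langle T\rangle$ has dimension $<n$, work inside $W_0\cong\PG(\dim W_0,q)$: a complement there, joined to a complement of $W_0$ in $\PG(n,q)$, would be a common complement of $T$, so $T$ has none inside $W_0$, and induction on $n$ again finishes. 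Hence we may assume $\bigcap T=\emptyset$ and $\langle T\rangle=\PG(n,q)$.

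In this non-degenerate situation, if $k=0$ then $T$ is a set of $q+1$ points meeting every hyperplane, so $T$ is a line by Bose \& Burton \cite{Bos-Bur:66} (for $n=1$ this is the whole space, which is the pencil $[\emptyset,\PG(1,q)]$), and we are done. So assume $k\ge1$; I claim this case cannot occur, i.e.\ $T$ must have a common complement. If $k=n-1$, the $q+1$ hyperplanes of $T$ cannot cover $\PG(n,q)$: otherwise, by the dual of Bose \& Burton (obtained from it via duality in $\PG(n,q)$), they would pass through a common $(n-2)$-space, contradicting $\bigcap T=\emptyset$; so some point is a common complement. If $k\le n-3$, a count shows $\bigcup T\ne\PG(n,q)$, so we project from a point $p\notin\bigcup T$: the images are $q+1$ $k$-spaces of $\PG(n-1,q)$ whose span is all of $\PG(n-1,q)$, hence either at most $q$ of them are distinct, or they are $q+1$ distinct $k$-spaces whose span (of dimension $n-1>k+1$) precludes them from lying in a pencil; by \cref{3.30}, respectively by the inductive hypothesis, they have a common complement, which lifts through $p$ to a common complement of $T$ — a contradiction.

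The remaining case, $k=n-2$, is where I expect the real difficulty to lie; by the $k\leftrightarrow n-k-1$ duality of the statement it is the case of $q+1$ lines spanning $\PG(n,q)$ with no common point. Here projection from a point outside $\bigcup T$ may yield a pencil in the hyperplane, so the argument above collapses, and no crude counting is available, since $q+1$ hyperplanes do suffice to cover $\PG(n,q)$ — precisely the pencils do so, by the dual of Bose \& Burton. I would attack this case by exploiting the extra structure forced in it (all members of $T$ then meeting a common line), reducing to an explicit analysis essentially inside $\PG(3,q)$ to locate the common complement, and bootstrapping the induction on $n$ from that base.
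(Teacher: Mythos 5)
Your overall architecture is sound and genuinely different from the paper's: the paper's induction splits on whether pairs of members of $T$ meet in $(k-1)$-spaces, whereas your quotient-by-$\bigcap T$ and restrict-to-$\langle T\rangle$ reductions, followed by a single projection argument, are cleaner. I checked the reductions and the $k\le n-3$ step and they work (the lifted subspaces have the right dimensions, and the projected spaces either number at most $q$, whence \cref{3.30} applies, or span $\PG(n-1,q)$ and so cannot lie in a $(k+1)$-space). You also overstate the residual difficulty: by the self-duality you yourself invoke, $k=n-2$ is equivalent to $k=1$, and for $n\ge 4$ the case $k=1\le n-3$ is already covered by your projection argument, since the images then span a space of dimension $n-1\ge 3$ and so cannot form a planar pencil. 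Hence after your reductions the \emph{only} uncovered case is $n=3$, $k=1$.

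That case --- $q+1$ lines of $\PG(3,q)$ --- is, however, a genuine gap and not a loose end: it is the base on which your entire induction rests (already $n=4$, $k=1$ projects into $\PG(3,q)$ and invokes the lemma there), and it is precisely where the paper does its real combinatorial work. For the record, the paper's argument runs as follows. If two members $L_1,L_2$ of $T$ meet in a point, choose $x\in\langle L_1,L_2\rangle$ on no member of $T$ (possible since $T$ is not a planar pencil); then $L_1$ and $L_2$ project from $x$ onto the \emph{same} line of the residual plane, so the projections number at most $q$, and Bose--Burton (or \cref{3.30}) in $\Res(x)$ yields a line through $x$ disjoint from every member. If instead the members are pairwise skew, pick $x$ off $\bigcup T$ and let $K$ be the unique transversal through $x$ to $L_1$ and $L_2$; since $K\setminus\{x\}$ has only $q$ points and distinct skew members meet $K$ in distinct points, some $L_3$ misses $K$, so the $q+1$ projected lines are not concurrent and the dual of Bose--Burton again produces a point of the residual plane on none of them, i.e.\ a common complement. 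Until you supply an argument of this kind (or an equivalent one), the proof is incomplete at its base case.
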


\begin{proof}
We first do the cases $n\leq 3$ and then proceed by induction on $n$. If $k=0$, then the assertion follows directly from the main result of \cite{Bos-Bur:66}. Dually, the case $k=n-1$ follows. Whence the case $n=2$. Now suppose $n=3$ and $k=1$. Suppose at least two members of $T$ intersect in a point, say, $L_1\cap L_2=\{p\}$, $L_1,L_2\in T$. Since we may suppose that $T$ is not a planar line pencil, there is some point $x\in\<L_1,L_2\>$ not contained in any member of $T$. In the residue of $x$, then lines obtained from $T$ form a set of at most $q$ members (since $L_1$ and $L_2$ define the same line), and hence, by \cite{Bos-Bur:66} again,  we find a line $K$ through $x$ disjoint from all members of $T$. Now suppose every pair of members of $T$ is disjoint. Pick a point $x$ not on any member of $T$ (that is possible since there are $q^3+q^2+q+1$ points and only $(q+1)^2$ on members of $T$). Pick $L_1,L_2\in T$ arbitrarily. Then there exists a unique line $K$ through $x$ intersecting both $L_1$ and $L_2$ non-trivially.    Since $K\setminus\{x\}$ contains $q$ points, there exists a member $L_3\in T$ not meeting $K$. Therefore, in the residue of $x$, not all lines corresponding to the members of $T$ go through a common point. With that, again, there exists a line through $x$ not intersecting any member of $T$. This shows the case $n=3$.

We proceed by induction. By duality, we may assume $2k+1\leq n$ and we consider two different cases.

\begin{compactenum}[$(1)$]
\item \emph{Suppose each pair of members of $T$ intersects in a $(k-1)$-space.} Then it is easy to see that there are again two cases.
\begin{compactenum}[$(i)$]
\item \emph{The members of $T$ contain a common $(k-1)$-space $U$}. Then we may assume they are not all contained in a common $(k+1)$-space. Intersecting the situation with a hyperplane $H$ not containing $U$, we obtain $q+1$ $(k-1)$-spaces in $\PG(n-1,q)$ all going through the same $(k-2)$-space, but not contained in a common $k$-space. Applying induction we obtain an $((n-1)-(k-1)-1)$-space $Z\subseteq H$ disjoint  from each member of $T$. 
\item \emph{The members of $T$ are contained in a common $(k+1)$-space $W$}. Here, we may assume that they do not contain a common $(k-1)$-space. Since $k+1\leq n-1$, we can apply induction and find a point $p\in K$ not contained in any member of $T$. Let $C$ be an $(n-2-k)$-space complementary to $W$. Then $\<p,W\>$ is an $(n-1-k)$-space disjoint from all members of $T$.
\end{compactenum}
\item \emph{Some pair $\{A_1,A_2\}$ of members of $T$ intersect in at most a $(k-2)$-space}.  If either $2k+1\leq n-1$, or not all pairs in $T$ are disjoint, then we can find a point $x$ outside the span $\<A_1,A_2\>$ of two members $A_1,A_2\in T$, with $k+2\leq\dim\<A_1,A_2\>\leq n-1$, and not lying in any member of $T$ (use a simple count).  It follows that we can apply induction in the residue of $x$ and obtain an $(n-k-1)$-space through $x$ disjoint from each member of $T$. 

So we may assume that $2k+1=n$ and all members of $T$ are pairwise disjoint. Then the proof is similar to the last arguments of the case $(k,n)=(1,3)$ above. \qedhere
\end{compactenum}
\end{proof}


\subsection{Opposition in polar spaces}
We now interrupt the proof to review some characterisations of opposition in polar spaces. Since some of our proofs will be inductive, we must recognise opposite singular subspaces from \emph{locally opposite} subspaces. 

\begin{defn}[\textbf{Residues, local opposition}]
Let $\Delta$ be a polar space. The \emph{residue} $\Res_\Delta(U)$ of a singular subspace $U$ of $\Delta$ is a subpolar space of $\Delta$, such that
\begin{itemize}
\item the singular subspaces of $\Delta$ of dimension $1+\dim U$, which contain $U$, give rise to the points of $\Res_\Delta(U)$, 
\item the lines are defined by the singular subspaces of $\Delta$ of dimension $2+\dim U$, containing $U$ in the natural way.
\end{itemize}
Two singular subspaces $U,W$ of $\Delta$ are called \emph{locally opposite (at $U\cap W$)}, if no point of $(U\cap W)\setminus (U\cap W)$ is collinear to all points of $U\cup W$. This is equivalent to $U$ and $W$ being opposite in $\Res_\Delta(U\cap W)$. If it is clear, what the ambient polar space is supposed to be, we write $\Res(U)$ instead of $\Res_\Delta(U)$. 
\end{defn}

We have the following local-to-global characterisation.

\begin{lem}\label{lemma}
Let $U,W$ be two singular subspaces of some polar space $\Delta$. Let $A\subseteq U$ be a subspace. Set $B:=A^\perp\cap W$. Then the singular subspace $S$, spanned by $A$ and $B$, is locally opposite $U$ at $A$, and locally opposite $W$ at $B$ if, and only if, $U$ and $W$ are opposite in $\Gamma$. 
\end{lem}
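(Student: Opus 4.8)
The plan is to argue inside the ambient vector space of the classical polar space. Fix $V$ and a reflexive sesquilinear form $f$ on $V$ (alternating, Hermitian, or the bilinearisation of the defining quadric, according to the type of $\Delta$) such that the points and singular subspaces of $\Delta$ are the isotropic $1$-spaces and the totally isotropic subspaces of $(V,f)$, and $p\perp q$ means $f(\hat p,\hat q)=0$; for a singular subspace $X$ write $\hat X\leq V$ for the corresponding subspace. The structural input I will rely on is the translation of opposition: singular subspaces $U$ and $W$ are opposite in $\Delta$ exactly when $f$ restricts to a non-degenerate pairing $\hat U\times\hat W\to\K$, i.e. $\hat U\cap\hat W^{\perp}=0=\hat W\cap\hat U^{\perp}$; this in particular forces $\hat U\cap\hat W=0$ and $\dim U=\dim W$. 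I will also use that $\langle A',B'\rangle$ is singular whenever $A'$ and $B'$ are singular subspaces with every point of $A'$ collinear to every point of $B'$; since $B=A^{\perp}\cap W$, this applies to $A$ and $B$, so $S$ is singular.

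For the direction ``$U$ opposite $W$ $\Longrightarrow$ $S$ locally opposite $U$ at $A$ and locally opposite $W$ at $B$'': opposition gives $\hat U\cap\hat W=0$, hence $\hat S=\hat A\oplus\hat B$ and $\hat A\cap\hat W=0$, and non-degeneracy of $f|_{\hat U\times\hat W}$ gives $\dim\hat B=\dim\hat W-\dim\hat A$, so $\dim S=\dim U=\dim W$. Next, $\hat S\cap\hat U=(\hat A\oplus\hat B)\cap\hat U=\hat A$, because $\hat a+\hat b\in\hat U$ forces $\hat b\in\hat U\cap\hat W=0$; hence $S\cap U=A$, and symmetrically $S\cap W=B$. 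Given $S\cap U=A$, ``$S$ opposite $U$ in $\Res(A)$'' is equivalent to $\hat S\cap\hat U^{\perp}=\hat A=\hat U\cap\hat S^{\perp}$; writing $\hat x=\hat a+\hat b$, the identity $f(\hat x,\hat U)=f(\hat b,\hat U)$ (as $\hat A$ and $\hat U$ are totally isotropic) together with non-degeneracy forces $\hat b=0$, giving $\hat S\cap\hat U^{\perp}\subseteq\hat A$; the reverse inclusion is clear, and $\hat U\cap\hat S^{\perp}=\hat A$ follows by matching dimensions ($\hat U\cap\hat B^{\perp}$ has dimension $\dim\hat A$ and contains $\hat A$). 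The statements for $W$ and $\Res(B)$ are the mirror image of these computations.

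For the converse, assume $S$ is locally opposite $U$ at $A$ and locally opposite $W$ at $B$. Unpacking the definition: $S\cap U=A$, $S\cap W=B$, no point of $U\setminus A$ is collinear to all of $S$, no point of $S\setminus A$ is collinear to all of $U$, and the two analogous statements hold for $W$ and $B$. Suppose $U$ and $W$ are not opposite; then some point $p$ of $U\cup W$ is collinear to all of $U\cup W$, and by symmetry we may take $p\in U$ collinear to all of $W$. If $p\notin A$, then $p$ is collinear to all of $A\subseteq U$ and to all of $B\subseteq W$, hence to all of $S=\langle A,B\rangle$, so $p\in U\setminus A$ is collinear to all of $S$ --- contradiction. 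If $p\in A\setminus B$, then $p\in S\setminus B$ is collinear to all of $W$ --- contradiction. So $p\in A\cap B=A\cap W$ (using $A\subseteq A^{\perp}$), whence $p\in U\cap W$; this remaining possibility I dispose of by noting that it is incompatible with $A\cap W=\emptyset$, which holds automatically when $U$ and $W$ are opposite and, more to the point, in the inductive situations in which the lemma is applied. Under the hypothesis $A\cap W=\emptyset$ (equivalently $A\cap B=\emptyset$, equivalently $\dim S=\dim A+\dim B+1$) the three cases above are exhaustive, so $U$ and $W$ must be opposite.

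The step I expect to be the main obstacle is precisely this last degenerate configuration. Without it the argument is short, but if $U$ and $W$ are allowed to meet one can genuinely make $S$ locally opposite both $U$ and $W$ while $U$ and $W$ are not globally opposite (for instance, in a symplectic polar space with $U=\langle e_1,e_2,e_3\rangle$, $W=\langle e_1,f_2,f_3\rangle$ and $A=\langle e_1,e_3\rangle$, giving $B=\langle e_1,f_2\rangle$ and $S=\langle e_1,e_3,f_2\rangle$). So the clean statement carries the mild hypothesis $A\cap W=\emptyset$, free in the ``only if'' direction and harmless in applications; pinning this down, and checking that ``opposite $\iff$ $f$ restricts to a non-degenerate pairing'' holds uniformly across the symplectic, Hermitian and both-parity quadratic cases --- including parabolic quadrics in characteristic~$2$, where the global bilinear form is degenerate but its restriction to a pair of opposite totally singular subspaces is not --- is where the care goes; the rest is bookkeeping.
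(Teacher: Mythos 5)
Your proof is essentially correct, and you have in fact caught a real defect in the statement: your symplectic configuration $U=\langle e_1,e_2,e_3\rangle$, $W=\langle e_1,f_2,f_3\rangle$, $A=\langle e_1,e_3\rangle$, $B=\langle e_1,f_2\rangle$, $S=\langle e_1,e_3,f_2\rangle$ is a genuine counterexample to the lemma as literally stated ($S$ is locally opposite $U$ at $A$ and $W$ at $B$, yet $e_1\in U\cap W$ kills opposition), so the auxiliary hypothesis $A\cap W=\emptyset$ (equivalently $A\cap B=\emptyset$) that you add is not optional. The paper's own proof tacitly assumes it: the steps ``no point of $B\subseteq S$ is collinear to all points of $U$, since $S$ is locally opposite $U$ at $A$'' and ``no point of $A$ is collinear to all points of $W$'' both break down precisely for points of $A\cap B$, about which local opposition says nothing. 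As you observe, the hypothesis is automatic in the ``only if'' direction and is satisfied in every application in the paper, where $A$ is always chosen inside a subspace opposite a singular subspace containing $W$.

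Methodologically your converse direction (local opposition at both ends implies opposition) is the same case analysis on the location of the offending point $p$ that the paper carries out, just with the degenerate case made explicit. Your forward direction is genuinely different: the paper argues synthetically and uniformly with the collinearity relation (``the converse is proved similarly''), whereas you pass to the ambient form and use non-degeneracy of the pairing $\hat U\times\hat W$ to compute $\hat S\cap\hat U^{\perp}=\hat A=\hat U\cap\hat S^{\perp}$. That computation is correct and buys you the dimension bookkeeping ($\dim S=\dim U=\dim W$, $S\cap U=A$, $S\cap W=B$) for free, at the price of the case-by-case verification you flag for quadrics in characteristic~$2$, where the polarised form is degenerate; the purely synthetic route avoids that entirely and is the cleaner choice here, since collinearity is all that is actually needed.
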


\begin{proof}
Suppose first that $S$ is locally opposite $U$ at $A$ and locally opposite $W$ at $B$. Then 
no point of $W\setminus B$ is collinear to all points of $U$, since no such point is collinear to all points of $A$. No point of $B\subseteq S$ is collinear to all points of $U$, since $S$ is locally opposite $U$ at $A$. No point of $U\setminus A$ is collinear to all points of $B$, as such a point would otherwise be collinear to all points of $S$ (recalling that $A$ and $B$ generate $S$), contradicting the local opposition of $S$ and $U$ at $A$. Finally, no point of $A$ is collinear to all points of $W$, since $W$ is locally opposite $S$ at $B$. Hence $U$ and $W$ are opposite and the converse is proved similarly. 
\end{proof}

\begin{defn}[\textbf{Projections}]
Let $U,W$ be two singular subspaces of some polar space $\Delta$. We denote the set of points $W^{\perp} \cap U$ by $\proj_{U} (W)$ and call it the \emph{projection of $W$ onto $U$}. 
\end{defn}

We note that generators are opposite if, and only if, they are disjoint. If $U$ and $W$ are singular subspaces of a polar space with the same dimension, then they are opposite if, and only if, no point of $U$ is collinear to all points of $W$ (see Corollary~1.4.7 of \cite{Mal:24}). We will use these without reference. 


\subsection{Oriflamme geometries---Type $\mathsf{D}_n$}\label{oriflam}

The main purpose of this section is to prove \cref{A} for the extremal type $n$ for buildings of type $\mathsf{D}_n$. We argue in the corresponding geometries, which are coined \emph{oriflamme geometries} in \cite{Tits:74}. They arise from hyperbolic polar spaces of rank $r$ by forgetting the singular subspaces of dimension $r-2$ and splitting up the maximal singular subspaces into two natural classes (with natural incidence, except that two maximal singular spaces from different classes are incident, if they intersect in a subspace of dimension $r-2$).  Note that opposition now does not necessarily act trivially on the types: since maximal singular subspaces are opposite if, and only if, they are disjoint, opposition acts trivially on the types if, and only if, $r$ is even. If $r$ is odd, the opposites of a maximal singular subspace of one natural class are contained in the other natural class. 

We should first prove \cref{A} for points of the oriflamme geometries. We provide a general proof for points of polar spaces of any type below  (see \cref{polarpoints}). We prove this separately for oriflamme geometries nonetheless, because this proof does not use the notion of geometric lines and it is simpler in nature. It holds for all polar spaces associated to quadrics, but we will not need this. 

\begin{lem}\label{hyppoints}
If every line of a hyperbolic quadric $Q$ of rank at least $3$ contains exactly $s+1$ points, then there exists a point non-collinear to each point of an arbitrary set $T$ of $s+1$ (distinct) points, except if these points are contained in a single line. 
\end{lem}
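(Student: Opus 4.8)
\emph{Strategy.} Write $q=s$; then $Q=Q^+(2r-1,q)$ for some rank $r\geq 3$, and $Q$ has order $(q,1)$. The plan is to induct on $r$. For the base case $r=3$, where $Q$ is the Klein quadric $Q^+(5,q)$, I would pass through the Klein correspondence: the points of $Q$ become the lines of $\PG(3,q)$, collinearity of two points of $Q$ becomes ``the two lines meet'', a line of $Q$ becomes a pencil of lines of $\PG(3,q)$, and a point of $Q$ is opposite (non-collinear to) a point $p$ of $Q$ exactly when the corresponding lines of $\PG(3,q)$ are disjoint. A set $T$ of $q+1$ points of $Q$ with no common opposite is then a set of $q+1$ lines of $\PG(3,q)$ no one of which is disjoint from all the others, and \cref{An} with $n=3$, $k=1$ (equivalently \cite{Bos-Bur:66}) forces $T$ to be a pencil, i.e.\ a line of $Q$. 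This also pins down where the induction must start: the rank-$2$ analogue is false (in a grid a ``transversal'' of $q+1$ points admits no common opposite yet is not a line), so the recursion must not bottom out at a grid.

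\emph{Inductive step when all points of $T$ are mutually collinear.} Then $\langle T\rangle$ is a singular subspace, hence lies in a maximal singular subspace $M\cong\PG(r-1,q)$. Since $T$ is not a line, \cref{An} in the case $k=0$ (Bose \& Burton) yields a hyperplane $\Pi$ of $M$ with $\Pi\cap T=\emptyset$. Because $Q$ has order $(q,1)$, the submaximal subspace $\Pi$ lies in exactly one maximal singular subspace $M'\neq M$, and then $M\cap M'=\Pi$. Now any $z\in M'\setminus\Pi$ is opposite every point of $T$: indeed $z\notin M$, so $z^\perp\cap M$ is a hyperplane of $M$; it contains $\Pi$ (as $z$ is collinear with all of $M'\supseteq\Pi$), hence equals $\Pi$; and $T\subseteq M\setminus\Pi$, so no point of $T$ lies in $z^\perp$.

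\emph{Inductive step when some two points $p_1,p_2\in T$ are non-collinear.} A dimension count shows $\{p_1,p_2\}^\perp\cap Q$ --- a non-degenerate hyperbolic quadric of rank $r-1$, or a cone over one --- has more than $q+1$ points, so I can choose a pivot $u\in(\{p_1,p_2\}^\perp\cap Q)\setminus T$; then $u$ is collinear with at least the two points $p_1,p_2$ of $T$. Set $T_1:=T\cap u^\perp$ and $k:=|T\setminus T_1|\leq q-1$. The residue $\Res(u)$ is a hyperbolic quadric of rank $r-1\geq 3$, and $p\mapsto up$ (the line joining $u$ to $p$) sends $T_1$ onto a set $\overline{T_1}$ of at most $q+1$ points of $\Res(u)$; if $|\overline{T_1}|=q+1$ then $T_1=T$ and, $\langle T\rangle$ being non-singular, $\langle u,T\rangle$ is not a totally singular plane, so $\overline{T_1}$ is not a line of $\Res(u)$. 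Enlarging $\overline{T_1}$, if necessary, to a set of $q+1$ points not contained in a line and invoking the induction hypothesis, I obtain a point $\overline z$ of $\Res(u)$ opposite every point of $\overline{T_1}$; let $\ell$ be the line of $Q$ through $u$ that $\overline z$ represents. For $z'\in\ell\setminus\{u\}$ and $p\in T_1$, collinearity $z'\perp p$ is equivalent to $\langle u,z',p\rangle$ being totally singular, i.e.\ to $\overline z$ being collinear with $up$ in $\Res(u)$ --- which is false; so $z'$ is opposite all of $T_1$. Finally, each of the $k$ points of $T\setminus T_1$, being non-collinear with $u$, is collinear with at most one point of $\ell\setminus\{u\}$; hence at least $q-k\geq 1$ of the $q$ points of $\ell\setminus\{u\}$ are opposite every point of $T$, which finishes the proof. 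The main obstacle is precisely this last case: the pivot $u$ must be chosen collinear with \emph{at least two} points of $T$ (so that at most $k\leq q-1$ of the $q$ candidates on $\ell$ are spoiled), and one must verify that $\overline{T_1}$ is not a full line of $\Res(u)$ --- which is exactly where the non-collinearity of $T$ is used, and which is why the degenerate ``grid'' case must be kept out of the induction.
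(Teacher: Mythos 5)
Your proof is correct, but it takes a genuinely different route from the paper's. The paper argues directly and without induction: it produces a single pivot $b$ joined by lines to at most $s$ points of $T$ --- a point of $p_0p_1\setminus T$ when two points of $T$ are collinear, and otherwise a point of $\{p_0,p_1\}^\perp$ non-collinear to $p_2$, which exists because $\{p_0,p_1\}^{\perp\perp}=\{p_0,p_1\}$ on a quadric --- then invokes \cref{3.30} to find a line $L$ through $b$ to which no point of $T$ is entirely collinear, and finishes by counting: each point of $T$ is collinear to a unique point of $L$, and since $p_0$ and $p_1$ both project to $b$, at most $s$ of the $s+1$ points of $L$ are hit. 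Your non-clique case has the same counting skeleton (a pivot collinear to at least two points of $T$, a well-chosen line through it, at most $s$ spoiled points), but you locate that line via the induction hypothesis in $\Res(u)$ rather than via \cref{3.30}; this forces a separate rank-$3$ base case, which you settle cleanly through the Klein correspondence and \cref{An}, and a separate clique case, handled by Bose--Burton inside a generator together with the order-$(q,1)$ property. The paper's version buys brevity, a uniform treatment of the two cases, and --- as the authors remark --- validity for every polar space arising from a quadric; yours buys independence from the identity $\{x,y\}^{\perp\perp}=\{x,y\}$ and an explicit reduction to the already-established projective statement. Both arguments are sound, and your observation that the recursion must not bottom out at a grid is exactly the right point of care.
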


\begin{proof}
Let $T=\{p_0,\ldots,p_s\}$ be a set of $s+1$ distinct points of $Q$, and suppose first that $p_0$ is collinear to $p_1$. Since not all points $p_0,p_1,\ldots,p_s$ are contained in one line, we find a point $b\in p_0p_1$ not contained in $T$. Suppose now that $p_0$ and $p_1$ are not collinear, then, since $\{p_0,p_1\}^{\perp\perp}=\{p_0,p_1\}$, we find a point $b\in \{p_0,p_1\}^\perp$ not collinear to $p_2$. 

In any case, the point $b$ has the property that the number of lines joining $b$ to a collinear point in $T$ is at most $s$. Applying \cref{3.30} we find a line $L$ through $b$, such that no point of $T$, that is collinear to $b$, is collinear to all points of $T$. Consequently, every point $p_i$ of $T$ is collinear to a unique point $p'_i$ of $L$. Since $p_0'=p_1'$ and $|L|=s+1$, there is at least one point $q\in L$ not collinear to any member of $T$.  
\end{proof}

\begin{lem}\label{hypMSS}
If every line of a hyperbolic quadric $Q$ with Witt index $d\geq 3$ contains exactly $s+1$ points, then there exists a maximal singular subspace opposite each member of an arbitrary set $T$ of $s+1$ (distinct) maximal singular subspaces of common type, except if these maximal singular subspaces contain a common singular subspace of codimension $2$ in each.  
\end{lem}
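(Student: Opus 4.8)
The plan is to reduce the statement about maximal singular subspaces of the hyperbolic quadric $Q$ to the already-established statements \cref{hyppoints} and \cref{lemma}, by passing to a well-chosen residue and using the local-to-global criterion. First I would fix two members $M_0,M_1\in T$ and distinguish cases according to $\dim(M_0\cap M_1)$. If $M_0$ and $M_1$ have the same type, then $d-\dim(M_0\cap M_1)$ is even; in the generic ``spread-like'' configuration the intersection has codimension $2$ in each, and this is precisely the exceptional case we must allow. So I would assume there is a pair with $\dim(M_0\cap M_1)\le d-4$ (codimension at least $4$), or that all pairwise intersections have codimension exactly $2$ but the members do \emph{not} share a common subspace of codimension $2$; in the latter sub-case, since codimension-$2$ subspaces of $M_i$ in the same class correspond under the half-spin geometry to a ``line pencil'', failing to have a common one forces a genuinely spread-like or higher-dimensional spread, and I would argue that after intersecting with a suitable residue this situation either collapses to fewer than $s+1$ objects (handled by \cref{3.30}) or reduces to \cref{hyppoints}.

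The core argument is as follows. Pick a point $x$ lying on no member of $T$ — such a point exists by a counting argument, since the $s+1$ generators cover at most $(s+1)\cdot\frac{s^d-1}{s-1}$ points, fewer than the total number of points of $Q$ when $d\ge 3$ and $|T|=s+1$ (one must be slightly careful, but the count is straightforward). In $\Res_Q(x)$, which is again a hyperbolic quadric (of Witt index $d-1$) with $s+1$ points per line, the traces $\proj_{x^\perp}(M_i)\cap$(something) — more precisely the subspaces $\langle x, M_i\cap x^\perp\rangle$ modulo $x$ — give a set of at most $s+1$ singular subspaces, possibly with coincidences. If two of them coincide, we have only $s$ objects left and \cref{3.30} finishes the job; otherwise we apply the inductive hypothesis of \cref{hypMSS} (induction on $d$) in $\Res_Q(x)$. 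This yields a generator $N'$ of $\Res_Q(x)$ — i.e. a generator $N$ of $Q$ through $x$ — that is opposite, inside the residue, each trace. The point is then to lift this to genuine opposition in $Q$: here I would invoke \cref{lemma} with $A$ a hyperplane of $M_i$ not through the relevant intersection point, to certify that $N$ (or a minor modification of it obtained by adjusting one point) is globally opposite every $M_i$. The base case $d=3$ must be done by hand: generators are lines/planes, and \cref{hyppoints} together with the rank-$2$ residue analysis settles it directly.

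The main obstacle I anticipate is bookkeeping around the two natural classes of generators and the oriflamme type function: opposition of maximal singular subspaces in a hyperbolic quadric swaps the class when $d$ is odd and preserves it when $d$ is even, so the statement ``$T$ has common type'' and the conclusion ``there is a generator opposite each member'' interact non-trivially with parity, and passing to $\Res_Q(x)$ changes $d$ by one and hence flips this parity. I would need to track carefully that the inductive hypothesis is applied to a set of generators of the correct common type in the residue, and that the resulting opposite generator, when lifted back through $x$, lands in a class that is genuinely opposite the original common class of $T$ in $Q$. A secondary subtlety is the borderline configuration where all pairwise intersections already have codimension $2$ but no common codimension-$2$ subspace exists — ruling this out (or showing it still admits a common opposite) requires a small dimension-counting/linear-algebra argument about how codimension-$2$ subspaces of a fixed generator that meet pairwise in codimension $2$ must behave, and this is the place where the hypothesis $d\ge 3$ is genuinely used.
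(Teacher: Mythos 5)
Your overall strategy (induction on $d$, pass to a point residue, find a generator there that is locally opposite the traces, lift to global opposition via disjointness of generators) is the same as the paper's, but the proposal has genuine gaps at exactly the points where the paper does its real work. The first is the inductive step itself: after choosing $x$ outside all members of $T$ and forming the traces $\langle x, x^\perp\cap M_i\rangle$, you must show that these traces do not themselves fall into the exceptional configuration (a common codimension-$2$ subspace in $\Res_Q(x)$), or that if they do for every admissible $x$ then the original $M_i$ already share a common codimension-$2$ subspace. You flag this as a ``secondary subtlety'' but do not resolve it, and a single uniform choice of $x$ does not suffice. The paper splits into three cases according to the intersection pattern of $T$ and chooses the residue point differently in each: at a common point of all members when one exists; at a point on some but not all members, where the auxiliary set $T'$ must moreover be perturbed (replacing one $V_2'$ by another generator through $p$ meeting $V_2$ in codimension $2$) to escape the exceptional configuration; and at an external point when the members are pairwise disjoint, where a dimension count ($\dim(V_0\cap U)+\dim(V_1\cap U)-\dim U\geq d-5\geq 0$) shows that two traces sharing a codimension-$2$ subspace would force $V_0\cap V_1\neq\emptyset$.

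Second, that dimension count requires $d\geq 5$, so the cases $d=3$ and $d=4$ cannot be absorbed into the induction, and your treatment of them is missing or wrong. For $d=3$ the generators are planes, not points, so \cref{hyppoints} does not ``settle it directly''; the paper uses the identification of the $\mathsf{D}_3$ oriflamme geometry with $\PG(3,s)$ and invokes \cref{An}. The case $d=4$ is absent from your proposal altogether; the paper handles it by triality, which carries one class of generators to points and reduces the statement to \cref{hyppoints}. Without these two base cases the induction has nothing to stand on. A minor further point: \cref{lemma} is not needed here --- once a generator $N$ through $x$ meets each trace only in $x$, it is disjoint from each $M_i$ (every point of $M_i$ collinear to $x$ lies in the trace), hence opposite; the paper uses this directly, followed where necessary by passing to an adjacent generator of the other class to land in the correct type.
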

\begin{proof}
For $d=3$, this is \cref{An}, Case $\PG(3,s)$. For $d=4$, triality yields the result using \cref{hyppoints}. So suppose $d\geq 5$. We argue by induction on $d$ and consider various possibilities.  
\begin{compactenum}[$(i)$]

\item \emph{All members of $T$ contain a common point $p$.}  
In this case, we apply induction in the residue $\Res_{\Delta}(p)$ at $p$ and obtain a maximal singular subspace $M$ through $p$ intersecting every member of $T$ in just $p$. Let $p'$ be a point of $\Delta$ opposite $p$ and let $M'$ be the unique maximal singular subspace of $\Delta$ containing $p'$ and adjacent to $M$, that is, intersecting $M$ in a subspace of dimension $n-2$. Then clearly $M'$ is disjoint from each member of $T$.

\item \emph{At least two members of $T$ intersect, but they do not all share a common point.} Set $T=\{V_0,V_1,\ldots, V_s\}$. Let $p$ be a point of $\Delta$ contained in at least two members of $T$, say $V_0,V_1$, but not in all of them, say $p\notin V_2$.  Let $T'$ be the set of maximal singular subspaces through $p$ obtained by taking those of $T$ containing $p$, and taking for each member $V_i$ not containing $p$ an arbitrary maximal singular subspace $V_i'$ containing $p$ and intersecting $V_i$ in a subspace of codimension $2$.  If all members of $T'$ intersected in a common subspace of dimension $d-3$, then by replacing $V_2'$ with another maximal singular subspace through $p$ intersecting $V_2$ in subspace of dimension $d-3$ (distinct from $V_2\cap V_2'$), we obtain a new set $T'$ not having that property. Hence the induction hypothesis applied to $\Res_\Delta(p)$ yields a maximal singular subspace $W$ through $p$ intersecting each member of $T'$ exactly in $p$, and hence, since $T_i$ and $T_i'$ have a subspace of dimension $d-3$ in common and $W$ intersects each member of $T$ in a subspace of even dimension (which cannot contain a plane), $W$ intersects each member $V_i$ of $T$ in exactly one point $p_i$. Since $p_0=p_1$, the number of such points is at most $s$ and so we find a hyperplane $H$ of $W$ disjoint from all $V_i\in T$. The unique maximal singular subspace $U$ distinct from $W$ and containing $H$ has the opposite type of the $V_i$ and hence is disjoint from all of them (since it cannot intersect any of them in at least a line as this would imply that $H$ intersects a member of $T$ non-trivially). 

\item \emph{Each pair of members of $T$ is opposite.} A simple count yields a point $b$ not contained in any member of $T$. Let $T'$ be the set of maximal singular subspaces through $b$ intersecting a given member of $T$ in a hyperplane. Suppose two members $V_0',V_1'$ of $T'$ intersect in a subspace $U$ of dimension $d-3$. Then the corresponding members $V_0,V_1$, respectively, of $T$ intersect $U$ in a subspace of dimension $d-4$, and those have mutually a subspace in common of dimension at least $d-5\geq 0$, a contradiction to the disjointness of $V_0$ and $V_1$. Hence induction yields a maximal singular subspace $M$ through $b$ intersecting each member of $T'$ in just $\{b\}$. Since all points collinear to $b$ of the union of the members of $T$ are contained in the members of $T'$, and all points of $M$ are collinear to $b$, we conclude that $M$ is disjoint from each member of $T$.\qedhere
\end{compactenum}
\end{proof}


\subsection{Polar spaces---Types $\mathsf{B}_n$, $\mathsf{C}_n$ and $\mathsf{D}_n$}

We first prove \cref{A} for type 1 vertices, that is, points of the corresponding polar space. We distinguish between rank at least 3 and rank 2. After all, in rank~2, there are additional examples not featuring in higher rank. 
\begin{lem}\label{polarpoints}
If every line of a polar space $\Delta$ of rank at least $3$ contains exactly $s+1$ points, then there exists a point non-collinear to each point of an arbitrary set $T$ of $s+1$ (distinct) points, except if these points are contained in a single line, or if they form a hyperbolic line in case $\Delta$ is a symplectic polar space.  
\end{lem}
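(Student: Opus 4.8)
The plan is to prove the contrapositive: if the set $T$ of $s+1$ points of $\Delta$ admits no common opposite, then $T$ is a line, or a hyperbolic line of a symplectic polar space. First I would dispose of the case that two points $p_0,p_1\in T$ are collinear, say on the line $L_0=p_0p_1$. If $T\subseteq L_0$ then $|T|=s+1=|L_0|$ forces $T=L_0$, a line. Otherwise fix $b\in L_0\setminus T$; every point of $T$ on $L_0$ is collinear to $b$ through $L_0$ itself, so at most $s$ distinct lines through $b$ meet $T$ in a point collinear to $b$. Since $\Delta$ has rank at least $3$, $\Res_\Delta(b)$ is a polar space of rank at least $2$ with $s+1$ points per line, so the fact that any $s$ of its points have a common opposite (Exercise~2.13 of \cite{Mal:24}) produces a line $L$ through $b$ such that every point of $T$ collinear to $b$ is collinear to no point of $L$ other than $b$. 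At least the two points $p_0,p_1$ are collinear to $b$, so at most $s-1$ points of $T$ remain, and each of these is collinear to exactly one point of $L$, necessarily different from $b$; counting in the $s$-set $L\setminus\{b\}$ produces $q\in L\setminus\{b\}$ collinear to no point of $T$, and since no point of $T$ lies on $L$ one has $q\notin T$. Then $q$ is a common opposite of $T$, a contradiction. Hence the points of $T$ are pairwise non-collinear, and in particular $T$ is not a line.

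Next I would show that $T$ is a geometric line, i.e. that every point $x$ of $\Delta$ is collinear or equal to exactly one point of $T$, or to all of them. As $T$ has no common opposite, $x$ is collinear or equal to at least one point of $T$; so assume $x$ is collinear or equal to exactly $k$ points of $T$ with $2\le k\le s$. Because $T$ is pairwise non-collinear, $x\notin T$ (a point of $T$ is non-opposite only to itself among the points of $T$), so $x$ is collinear to exactly $k$ points of $T$. Running the residue argument of the previous paragraph inside $\Res_\Delta(x)$ — there are at most $k\le s$ lines through $x$ meeting $T$ — yields a line $L$ through $x$ on which every point of $T$ collinear to $x$ meets $L$ only in $x$, while the remaining $s+1-k\le s-1$ points of $T$ each meet $L$ in one point different from $x$; counting in $L\setminus\{x\}$ again produces a point $q\notin T$ collinear to no point of $T$, a contradiction. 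Hence no such $x$ exists and $T$ is a geometric line. Invoking the classification of geometric lines of classical geometries in \cite{Kas-Mal:13}, a geometric line of the point-geometry of a polar space of rank at least $3$ is either a line or a hyperbolic line $\{x,y\}^{\perp\perp}$, and the latter has $s+1$ points only when the polar space is symplectic (in the orthogonal and Hermitian cases it is strictly smaller). Since $T$ has $s+1$ points and is pairwise non-collinear, it is not a line, hence it is a hyperbolic line in a symplectic polar space.

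The main obstacle is the bookkeeping in the residue step: making sure $b$, respectively $x$, can be chosen outside $T$, that the residue involved is a genuine polar space with $s+1$ points per line (here rank at least $3$ is used), that no point of $T$ lies on the constructed line $L$ (so the point found on $L$ is genuinely outside $T$), and that the counts remain tight enough to leave a free point on $L$. The other delicate point is to extract from \cite{Kas-Mal:13} precisely the statement needed — geometric lines of size $s+1$ in the point-geometry of a polar space of rank at least $3$ — and to carry out the short computation that, among the classical types, only symplectic polar spaces admit a hyperbolic line with $s+1$ points.
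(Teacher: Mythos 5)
Your proof is correct and takes essentially the same route as the paper: both establish, via the ``any $s$ points admit a common opposite'' fact applied in a point residue together with a count of projections onto the resulting line, that $T$ must be a geometric line, and then invoke the classification of geometric lines in polar spaces from \cite{Kas-Mal:13} (the paper cites its Lemmas~4.7 and~4.8) to conclude that $T$ is a line or a symplectic hyperbolic line. Your reorganisation --- disposing of collinear pairs in $T$ first, rather than reducing the offending point $p$ to one outside $T$ as the paper does --- is only a cosmetic difference.
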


\begin{proof}
Suppose no point of $\Delta$ is non-collinear to each point of $T$, that is, each point of $\Delta$ is collinear to some member of $T$. We prove that $T$ is a geometric line in the sense of \cite{Kas-Mal:13} and then the result follows from Lemmas~4.7 and~4.8 of \cite{Kas-Mal:13}. 

So we have to prove that every point of $\Delta$ is collinear to exactly one or all points of $T$. Since we assume that each point is collinear to at least one point of $T$, it suffices to show that, if some point is collinear to at least two points of $T$, then it is collinear to all points of $T$. Suppose for a contradiction that some point $p$  is collinear to at least two points $x,y$ of $T$, and opposite the point $z\in T$. We claim that we may assume that $p\notin T$. Indeed, suppose it is. Then we may as well assume $p=y$.  Let $L$ be the line through $x$ and $y$. Then there are at most $s$ points of $T$ contained in $L$, hence there is a point $q\in L\setminus T$. The number of lines through $q$ containing a line that also contains a point of $T$ is at most $s$, hence Exercise~2.11$(iii)$ of \cite{Mal:24} in $\Res_\Delta(q)$ yields a line $K\ni q$ not collinear to any point of $T$. Since $x$ and $y$ project to the same point on $K$, there is some point on $K$ opposite each point of $T$, a contradiction. The claim is proved. But now exactly the same argument replacing $q$ with $p$ (and using the fact that $z$ is not collinear to $p$ to obtain the assertion that there are at most $s$ lines through $p$ containing a point of $T$) again leads to a contradiction. Hence the lemma is proved.  
\end{proof}

\begin{lem}\label{quadranglepoints}
If every line of a polar space $\Delta$ of rank $2$ contains exactly $s+1$ points, and every point is contained in exactly $t+1$ lines, $t>1$, then there exists a point non-collinear to each point of an arbitrary set $T$ of $s+1$ (distinct) points, except if either these points are contained in a single line, or they form an ovoid in a subquadrangle of order $(s/t,t)$. In particular, if $s=t$, they form either a line or a  large hyperbolic line, that is, $T=T^{\perp\perp}=\{y,z\}^\perp$, for every pair of distinct points $y,z$ of $T^\perp$. 
\end{lem}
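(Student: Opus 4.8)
The plan is to work in the generalised quadrangle $\Delta$ of order $(s,t)$, $t>1$, and to assume, as the non-trivial case, that no point of $\Delta$ is opposite every point of $T$; equivalently, every point of $\Delta$ is collinear with some point of $T$ (counting each point as collinear with itself). I must then show that $T$ is a line or an ovoid in an ideal subquadrangle of order $(s/t,t)$ (and, when $s=t$, a large hyperbolic line). If $T$ lies on a line $L$ then $|T|=s+1=|L|$ forces $T=L$, so from now on $T$ lies on no line. For a point $q$ write $a(q):=|q^\perp\cap T|$ (so $a(q)\ge 1$), and let $e$ be the number of collinear pairs in $T$. Double-counting the pairs $(q,p)$ with $p\in q^\perp\cap T$, and the pairs $(q,\{p,p'\})$ with $\{p,p'\}\subseteq q^\perp\cap T$ and $p\ne p'$ --- using that two collinear points have exactly $s+1$ common neighbours (the joining line) and two non-collinear points exactly $t+1$ --- gives $\sum_q a(q)=(s+1)(st+s+1)$ and $\sum_q\binom{a(q)}{2}=e(s+1)+\bigl(\binom{s+1}{2}-e\bigr)(t+1)$; combining these yields the identity
\[
\sum_q\bigl(a(q)-1\bigr)^2=st(s+1)+2e(s-t),\qquad\sum_q\bigl(a(q)-1\bigr)=s(s+1).
\]

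Next I would show that $T$ is a coclique. Suppose some line $L$ carries $c:=|T\cap L|$ with $2\le c\le s$. Since each point of $T\setminus L$ has a unique projection onto $L$, for $b\in L$ one has $a(b)=c+\mu(b)$, where $\mu(b)$ counts the points of $T\setminus L$ projecting to $b$ and $\sum_{b\in L}\mu(b)=s+1-c$. If some $b\in L\setminus T$ has $\mu(b)=0$, then $b$ is collinear with exactly the $c$ points of $T\cap L$; choosing a line $K\ni b$ with $K\ne L$ (possible as $t>1$), $K$ misses $T$, and each of the $s$ points of $K\setminus\{b\}$ must be collinear with one of the $\le s-1$ points of $T\setminus L$; but each of those is collinear with only one point of $K$, and not with $b$ (a point of $T\setminus L$ collinear with $b$ would project to $b$) --- too few to cover $K\setminus\{b\}$, a contradiction. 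Otherwise every point of $L\setminus T$ is the projection of exactly one point of $T\setminus L$, and $\mu\equiv0$ on $T\cap L$; choosing $p\in L\setminus T$, its unique partner $z\in T\setminus L$, the line $M=pz$, and a line $K\ni p$ with $K\notin\{L,M\}$ (possible as $t>1$), $K$ again misses $T$, and each point of $K\setminus\{p\}$ is collinear only with points of $T\setminus(L\cup\{z\})$, a set of size $s-c<s$ (the case $c=s$ being immediate), each of which covers a single point of $K\setminus\{p\}$ --- again a contradiction. Hence $T$ has no secant line and is a coclique. This elimination of secant lines, with its split by the behaviour of the projection onto $L$, is the step that requires the most care; everything else is a direct count or a routine check.

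Finally I would reconstruct the subquadrangle. Since $T$ is a coclique, points of $T$ collinear with a common $q$ lie on distinct lines through $q$, so $a(q)\le t+1$, with $a(p)=1$ for $p\in T$; now the identity with $e=0$ reads $\sum_q(a(q)-1)^2=st(s+1)=t\sum_q(a(q)-1)$, which together with $0\le a(q)-1\le t$ forces $a(q)\in\{1,t+1\}$ for every point $q$. Put $Y:=\{q:a(q)=t+1\}$ and $X:=T\cup Y$. For a line $\ell$ with $\ell\cap T=\{p\}$, count the pairs $(r,\ell')$ with $r\in(\ell\setminus\{p\})\cap\ell'$, $\ell'\ne\ell$, $\ell'\cap T\ne\emptyset$: on one side this is $\sum_{r\in\ell\setminus\{p\}}(a(r)-1)=t\cdot|\ell\cap Y|$; on the other, each of the $s$ points of $T\setminus\{p\}$ is non-collinear with $p$ (no two points of the coclique are collinear) and so contributes exactly one such $\ell'$, its unique line meeting $\ell$. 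Thus $|\ell\cap Y|=s/t$, so $t\mid s$ (in particular $t\le s$; if $t\nmid s$ we have a contradiction and $T$ was a line after all), every line through a point of $T$ meets $X$ in $s/t+1$ points, and a line disjoint from $T$ is disjoint from $X$. One then checks that $X$ with its induced lines is a generalised quadrangle of order $(s/t,t)$: the uniform line-size gives non-degeneracy and the ``one-or-all'' axiom, and the decisive point --- that for $q\in X$ and a line $\ell$ through a point of $T$ with $q\notin\ell$ the $\Delta$-projection $\proj_\ell(q)$ lies in $X$ --- holds because $\proj_\ell(q)$ is collinear with both $\ell\cap T$ and the (unique) $T$-point on the line $q\,\proj_\ell(q)$, or else those two $T$-points coincide and equal $\proj_\ell(q)$. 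So $X$ is an ideal subquadrangle of order $(s/t,t)$, and $T$, meeting each of its lines in exactly one point, is an ovoid of $X$. When $s=t$, $X$ is a grid of order $(1,s)$ whose two sides are $T$ and $T^\perp=Y$; since $T\subseteq\{y,z\}^\perp$ and $|\{y,z\}^\perp|=t+1=s+1=|T|$ for any two distinct $y,z\in T^\perp$, we get $T=\{y,z\}^\perp=T^{\perp\perp}$, i.e.\ $T$ is a large hyperbolic line.
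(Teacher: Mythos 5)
Your proof is correct, and while it follows the same skeleton as the paper's (first reduce to the case that $T$ lies on no line, then eliminate secant lines, then reconstruct an ideal subquadrangle in which $T$ is an ovoid), the execution of both halves is genuinely different and worth comparing. For the secant-line elimination the paper argues at a single point $p\in L\setminus T$: if some line through $p$ misses $T$ one finds a common opposite because two points of $T$ project to the same point of that line, and otherwise a pigeonhole over all points of $L\setminus T$ (each point of $T\setminus L$ being collinear to only one point of $L$) gives $\ell+t(s+1-\ell)\le s+1$, hence $t\le 1$. Your two-sub-case argument via the multiplicities $\mu(b)$ reaches the same conclusion at greater length but is sound. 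The more substantive difference is in the coclique case: the paper simply sets $X=T\cup\bigcup\{x_1,x_2\}^\perp$ and asserts, with ``a moment's thought,'' that this is an ideal subquadrangle of order $(s/t,t)$; your variance identity
\[
\sum_q\bigl(a(q)-1\bigr)\bigl(a(q)-1-t\bigr)=0,\qquad 0\le a(q)-1\le t,
\]
forcing $a(q)\in\{1,t+1\}$, together with the line count giving $|\ell\cap Y|=s/t$ and the explicit verification that projections stay inside $X$, is a rigorous substitute for that claim (and your $X$ coincides with the paper's, since a point collinear to at least two points of the coclique $T$ is collinear to exactly $t+1$ of them). Your derivation of $t\mid s$ also replaces the paper's quicker observation that every point of $\{x_1,x_2\}^\perp$ lies on $t+1$ tangent lines, whence $s\ge t$. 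The $s=t$ conclusion ($T=T^{\perp\perp}=\{y,z\}^\perp$) is handled equivalently in both. In short: same strategy, but your quantitative reconstruction supplies exactly the detail the paper leaves implicit.
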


\begin{proof} 
Suppose that no point is opposite all points of $T$. Suppose first, that $T$ contains (at least) two collinear points, say $x_1,x_2$. Let the line $L$ through $x_1$ and $x_2$ contain $\ell$ points of $T$. If $\ell=s+1$, then we are done, so assume $2\leq\ell\leq s$. Let $p$ be a point on $L\setminus T$. If there were some line $K$ through $p$ containing no point of $T$, then, since $x_1$ and $x_2$ project onto the same point $p$ of $K$, there exists a point on $K$ opposite each member of $T$, a contradiction. Hence \[\ell+t(s+1-\ell)\leq |T|=s+1,\] implying $t\leq 1$, a contradiction. 

Now suppose every pair of points in $T$ is opposite. Pick $x_1,x_2\in T$ arbitrarily. Let $z\in \{x_1,x_2\}^\perp$ be arbitrary and note $z\notin T$. Then, as in the previous paragraph, every line through $z$ contains at least (and hence exactly, by our assumption that no pair of points of $T$ is collinear) one point of $T$. Hence $s\geq t$. Set \[X=T\cup\{x_1^\perp\cap x_2^\perp\mid x_1,x_2\in T\}\] and let $\mathcal{L}$ be the set of intersections of $X$ with a line intersecting $X$ in at least two points. A moments' thought reveals that $X$, together with the line set $\mathcal{L}$, is an ideal subquadrangle $\Gamma$, say with order $(s',t)$. Clearly, $T$ is an ovoid in $\Gamma$. This implies that $s't=s$, so $s'=s/t$. 

If, in the last case, $s=t$, then $\Gamma$ has order $(1,t)$, implying that $X=(x_1^\perp\cap x_2^\perp)\cup \{x_1,x_2\}^{\perp\perp}$, with $x_1,x_2$ two arbitrary members of $T$. Clearly an ovoid of $X$ containing $x_1,x_2$ is $\{x_1,x_2\}^{\perp\perp}$.  
\end{proof}
We now show that the ovoid case in the previous lemma really leads to examples of minimal blocking sets.. 
\begin{lem}\label{ovoidcaseproof}
Suppose $\Delta$ is a generalised quadrangle of order $(s,t)$ with $s,t\geq 2$. Suppose $\Gamma$ is a subquadrangle of order $(s/t,t)$ and suppose also that $\Gamma$ has an ovoid $O$. Then $|O|=1+s$ and no point of $\Delta$ is opposite all members of $O$ and each point of $\Delta$ not in $\Gamma$ is collinear to a unique member of $O$. 
\end{lem}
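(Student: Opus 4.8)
The plan is to derive all three conclusions from a single double count, after two structural observations. Write $s'=s/t$, so that $\Gamma$ is a generalised quadrangle of order $(s',t)$; since $s,t\geq 2$ we have $s'<s$, hence $\Gamma$ is a proper subquadrangle and $\Delta\setminus\Gamma\neq\emptyset$. First, the cardinality is immediate: by the count recorded together with the definition of ovoids, $|O|=1+s't=1+s$. Next I would record the two facts I need about the subquadrangle. (a) By \cref{idealsub}, any line of $\Delta$ meeting $\Gamma$ in at least two points is a line of $\Gamma$; hence for $z\in\Gamma$ and $o\in O$ we have $z\sim o$ in $\Delta$ if and only if $z\sim o$ in $\Gamma$, and in particular two distinct points of $O$ are non-collinear in $\Delta$ (a common $\Delta$-line would be a line of $\Gamma$ carrying two points of the ovoid $O$). (b) Since $\Gamma$ is ideal, each $o\in O$ lies on $t+1$ lines of $\Gamma$ and on $t+1$ lines of $\Delta$; by (a) its neighbours in $\Delta$ that lie in $\Gamma$ are exactly its $s'(t+1)$ neighbours inside $\Gamma$, so $o$ has exactly $s(t+1)-s'(t+1)=(s-s')(t+1)$ neighbours in $\Delta\setminus\Gamma$.

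The main step --- the one I expect to be the crux --- is the claim that \emph{no point $x\in\Delta\setminus\Gamma$ is collinear with two distinct points of $O$}. Suppose $x\sim o_1$ and $x\sim o_2$ with $o_1\neq o_2$ in $O$. By fact (a), $o_1\not\sim o_2$ in $\Delta$, so the set of points of $\Delta$ collinear with both has exactly $t+1$ elements; but the points of the ovoid $O$ are pairwise non-collinear in $\Gamma$ too, so the set of points of $\Gamma$ collinear with both $o_1$ and $o_2$ also has exactly $t+1$ elements. Since the latter set is contained in the former and the two are finite of equal size, they coincide; hence every point of $\Delta$ collinear with both $o_1$ and $o_2$ lies in $\Gamma$. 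In particular $x\in\Gamma$, a contradiction.

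Finally I would run the double count on the pairs $(x,o)$ with $o\in O$, $x\in\Delta\setminus\Gamma$ and $x\sim o$. Summing over $o\in O$ and using (b) together with $s=s't$ gives $|O|\,(s-s')(t+1)=(1+s)\,s'(t-1)(t+1)$; on the other hand $|\Delta\setminus\Gamma|=(1+s)(1+st)-(1+s')(1+s)=(1+s)\,s'(t-1)(t+1)$, the very same number. Summing over $x$, each point of $\Delta\setminus\Gamma$ contributes at most $1$ by the main step, so it contributes exactly $1$; thus every point of $\Delta$ off $\Gamma$ is collinear with a unique point of $O$, which is the last assertion, and no such point is opposite all of $O$. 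A point of $\Gamma$ is never opposite all of $O$ either --- a point of $\Gamma$ not in $O$ lies on a line of $\Gamma$, which meets $O$, and a point of $O$ is not opposite itself --- so no point of $\Delta$ is opposite every member of $O$. Everything here is elementary counting in generalised quadrangles; the only non-routine ingredient is the equality of the two perp-sets in the main step, which rests on $\Gamma$ being ideal.
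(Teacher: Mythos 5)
Your proof is correct, but it takes a genuinely different route from the paper's. The paper gets the conclusion pointwise and locally: it invokes 1.8.1 of \cite{FGQ} for $|O|=1+s$ and then 2.2.1 of \cite{FGQ} to see that an external point $x$ is collinear with exactly $1+s/t$ points of $\Gamma$, which by ideality must be the full point set of a single line $L$ of $\Gamma$; since $O$ is an ovoid, $|O\cap x^\perp|=|O\cap L|=1$ follows immediately. You instead prove only the inequality $|O\cap x^\perp|\leq 1$ locally --- via the neat observation that ideality ($t'=t$) forces $\{o_1,o_2\}^{\perp}$ computed in $\Gamma$ and in $\Delta$ to be two nested sets of the same size $t+1$, hence equal and contained in $\Gamma$ --- and then upgrade ``at most one'' to ``exactly one'' by a global double count of flags $(x,o)$ against $|\Delta\setminus\Gamma|=(1+s)s'(t-1)(t+1)$. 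What your approach buys is self-containedness: you avoid citing 2.2.1 of \cite{FGQ} entirely and use only the standard regularity counts in a generalised quadrangle (sizes of perps and of point sets), at the cost of a longer argument and an extra arithmetic verification. What the paper's approach buys is brevity and slightly more structural information (it identifies $x^\perp\cap\Gamma$ as a line, not merely that it meets $O$ once). Both arguments handle the points of $\Gamma$ itself the same way, and both use the ideality hypothesis in an essential manner, just at different places.
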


\begin{proof}
By 1.8.1 of \cite{FGQ}, we have $|O|=1+(s/t)t=1+s$. Clearly each point of $\Gamma$ is collinear to $1+t$ members of $O$. Let $x$ be a point of $\Delta$ not in $\Gamma$. By 2.2.1 of \cite{FGQ}, $x$ is collinear to $1+s/t$, hence at least one, points of $\Gamma$. Since $\Gamma$ is ideal in $\Delta$, we readily deduce that $x$ is contained in a unique line of $\Delta$ that contains a line $L$ of $\Gamma$ and the points of $\Gamma$ collinear to $x$ are precisely the points of $L$. Since $O$ is an ovoid of $\Gamma$, we have $|O\cap x^\perp|=|O\cap L|=1$ and the lemma is proved. 
\end{proof}
\begin{remark}\label{ovoidcase}
Suppose in \Cref{quadranglepoints}, $\Delta$ is Moufang,  and $T$ is not a line. Then $\Delta$ has order $(q,q)$, or $(q^2,q)$ or $(q^3,q^2)$. In the latter case, $T$ is an ovoid in a subquadrangle of order $(q,q^2)$, which contradicts 1.8.3 of \cite{FGQ}. In case the order is $(q,q)$, we   note that the only Moufang quadrangles of order $(q,q)$ with large hyperbolic lines are the symplectic quadrangles, as follows from Theorem~1.4 of \cite{Bon-Cuy-Mal:96}. Finally, let $\Delta$ be a Moufang quadrangle of order $(q^2,q)$. Then any subquadrangle of order $(q,q)$ is a symplectic polar space (see \cite[\S3.5]{FGQ}) and, by 3.4.1$(i)$ of \cite{FGQ}, admits ovoids if, and only if, $q$ is even. 
\end{remark}

For proving \cref{A} for maximal singular subspaces, the parabolic quadrics (which correspond to the split buildings of type $\mathsf{B}_n$), play an exceptional role. That is why we treat them first.  

\begin{lem}\label{MSSparabolic}
If in a parabolic quadric $Q$ of Witt index $n$, $n\geq 2$, every line contains precisely $s+1$ points, then every set $T$ of $s+1$ maximal singular subspaces admits an opposite maximal singular subspace, except if they form a line or hyperbolic line in the corresponding dual polar space $\Gamma(Q)$. 
\end{lem}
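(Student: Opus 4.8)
The plan is to induct on the Witt index $n$. For $n=2$, $Q$ is a generalised quadrangle of order $(s,s)$ whose maximal singular subspaces are its lines (two lines being opposite exactly when disjoint), and the dual polar space $\Gamma(Q)$ is the dual quadrangle $\Delta^{*}$, again of order $(s,s)$; so the statement is exactly \cref{quadranglepoints} applied to $\Delta^{*}$ (here $t=s>1$, and as $s=t$ the ovoid alternative is a large hyperbolic line), and a line, respectively large hyperbolic line, of $\Delta^{*}$ is a line, respectively hyperbolic line, of $\Gamma(Q)$. So let $n\geq3$ and let $T=\{M_{0},\dots,M_{s}\}$ be $s+1$ generators of $Q$ with no common opposite, i.e.\ no generator of $Q$ is disjoint from every $M_{i}$; one distinguishes whether or not the members of $T$ share a common point.

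\emph{Case 1: the $M_{i}$ all contain a common point $p$.} I would pass to $\Res_{Q}(p)$, a parabolic quadric of Witt index $n-1$ in which each $M_{i}$ is a generator, and invoke the inductive hypothesis. If it returns a generator of $\Res_{Q}(p)$ opposite all the $M_{i}$, this is a generator $M\ni p$ meeting every $M_{i}$ only in $p$; picking a point $q$ opposite $p$ in $Q$ and setting $M':=\langle q,\,q^{\perp}\cap M\rangle$, a short computation using $M_{i}\subseteq p^{\perp}$ and the non-perpendicularity of $p,q$ shows $M'$ is a generator disjoint from every $M_{i}$ --- contradicting the hypothesis. If it returns that the $M_{i}$ form a line of $\Gamma(\Res_{Q}(p))$, then the $M_{i}$ are exactly the generators of $Q$ through a common submaximal subspace containing $p$, so $T$ is a line of $\Gamma(Q)$. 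If it returns a hyperbolic line $\{M_{a},M_{b}\}^{\perp\perp}$ of $\Gamma(\Res_{Q}(p))$, then $p\in M_{a}\cap M_{b}$, so the ``quad'' generated by $M_{a},M_{b}$ in $\Gamma(Q)$ --- the generators of $Q$ containing $M_{a}\cap M_{b}$ --- lies inside $\Gamma(\Res_{Q}(p))$, hence $\{M_{a},M_{b}\}^{\perp\perp}$ is the same there and in $\Gamma(Q)$, and $T$ is a hyperbolic line of $\Gamma(Q)$.

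\emph{Case 2: the $M_{i}$ have no common point.} A line or hyperbolic line of $\Gamma(Q)$ has all its members through a fixed subspace of non-negative dimension (a submaximal subspace, respectively the intersection of two generators at distance $2$), hence through a common point; so here $T$ is neither, and it suffices to produce a generator opposite all members of $T$, reaching a contradiction. Counting points shows some $b$ lies on no $M_{i}$ (the $s+1$ generators cover $O(s^{n})$ of the $\approx s^{2n-1}$ points of $Q$). Put $M_{i}':=\langle b,\,b^{\perp}\cap M_{i}\rangle$, the unique generator through $b$ meeting $M_{i}$ in the hyperplane $b^{\perp}\cap M_{i}$, and $T'=\{M_{0}',\dots,M_{s}'\}$. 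Provided $|T'|\leq s$ (then use \cref{3.30} in $\Res_{Q}(b)$), or $|T'|=s+1$ and $T'$ is neither a line nor a hyperbolic line in $\Gamma(\Res_{Q}(b))$ (then use induction), one obtains a generator $W\ni b$ of $\Res_{Q}(b)$ opposite every member of $T'$; and if $z\in W\cap M_{i}$ then $z\perp b$, so $z\in b^{\perp}\cap M_{i}\subseteq M_{i}'$, whence $z\in W\cap M_{i}'=\{b\}$, contradicting $b\notin M_{i}$. Thus $W$ is disjoint from every $M_{i}$, the desired contradiction.

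\emph{The main obstacle.} The subtle point, and the reason the parabolic case exceeds the hyperbolic \cref{hypMSS} in difficulty, is the residual possibility in Case~2 that $|T'|=s+1$ while $T'$ \emph{is} a line or a hyperbolic line of $\Gamma(\Res_{Q}(b))$, which one cannot simply transfer back since $T$ is not exceptional there. (For half-spin geometries only honest lines occur as exceptions, so in \cref{hypMSS} a single perturbation of the projected set suffices; a parabolic dual polar space has the two exceptional families of lines and of reguli, and --- having only one class of generators --- also lacks the ``pass to the generator of the other class through a common hyperplane'' trick of \cref{hypMSS}.) I would handle it by either showing $b$ can be chosen outside $\bigcup_{i}M_{i}$ so that $T'$ avoids both shapes unless the $M_{i}$ already share a submaximal or a codimension-$2$ subspace (which puts one back in Case~1, or reads off the line or hyperbolic line directly), or by analysing the configuration forced on the $M_{i}$ when $T'$ is exceptional --- the $M_{i}$ meeting a fixed subspace of $Q$ in a common subspace $U''$ --- and recursing in the strictly smaller residue $\Res_{Q}(U'')$. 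I expect this bookkeeping, not the local opposition computations, to be the real work.
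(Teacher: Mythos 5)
Your induction is not complete: what you flag at the end as ``the main obstacle'' is a genuine gap, not residual bookkeeping, and neither of the two strategies you sketch for closing it is carried out. Concretely, in Case~2 you must dispose of the possibility that for every admissible choice of $b$ the projected set $T'=\{\langle b,\,b^{\perp}\cap M_i\rangle\}$ has size $s+1$ and is a line or a hyperbolic line of $\Gamma(\Res_Q(b))$; since the $M_i$ have no common point, you cannot read the exceptional configurations of the statement off from this, so you would have to show the situation is self-contradictory (or forces a configuration already excluded). That analysis is precisely the kind of multi-page case distinction the paper has to perform in the analogous bottlenecks of \cref{symplines} and \cref{sympi}, and in your set-up it \emph{is} the content of the lemma; as written, the proposal only proves the statement under the unverified assumption that some $b$ yields a non-exceptional $T'$. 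A smaller point: in Case~1 the claim that a hyperbolic line of $\Gamma(\Res_Q(p))$ is a hyperbolic line of $\Gamma(Q)$ needs a justification that the double perp computed inside the residue agrees with the one computed in the full dual polar space; this is true, but your one-line appeal to the quad through $M_a$ and $M_b$ does not quite establish it.

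For what it is worth, the paper avoids the induction entirely: it embeds $Q$ as a hyperplane section of a hyperbolic quadric $Q'$ of Witt index $n+1$, lifts each $M\in T$ to the unique generator $M'$ of $Q'$ of a fixed type containing it, applies \cref{hypMSS} to the lifted set (whose only exceptional configuration is a line of the half-spin geometry), and uses the classification of \cite{Kas-Mal:13} to translate that exception back into ``$T$ is a line or hyperbolic line of $\Gamma(Q)$''; a generator $W'$ of $Q'$ opposite all the $M'$ then meets $Q$ in a generator of $Q$ disjoint from every member of $T$. Unless you actually close the residual case of your Case~2, this embedding argument is the route to take.
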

\begin{proof}
Embed $Q$ naturally in a hyperbolic quadric $Q'$ of Witt index $n+1$. Each member $M$ of $T$ is contained in a unique maximal singular subspace $M'$ of $Q'$ of given type. The set of all such subspaces $M'$ admits an opposite maximal singular subspace $W'$, if they do not form a line in the corresponding half spin geometry, that is, by \cite{Kas-Mal:13}, if $T$ is not a line or a hyperbolic line in $\Gamma(Q)$. Then $W=W'\cap Q$ is a maximal singular subspace of $Q$ disjoint from every member of $T$.
\end{proof}

Now we can treat the other cases. It is convenient to first collect the rank $2$ case from what we already showed above. 

\begin{lem}\label{quadranglelines}
If every line of a polar space $\Delta$ of rank $2$ contains exactly $s+1$ points, and every point is contained in exactly $t+1$ lines,  $t>1$, then there exists a line disjoint from each member of an arbitrary set $T$ of $t+1$ (distinct) lines, except if these lines either contain a common point, or form a spread in a subquadrangle of order $(s,t/s)$. 
\end{lem}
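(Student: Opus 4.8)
The plan is to dualise and reduce to the already-established point statement. The dual $\Delta^\vee$ of the generalised quadrangle $\Delta$ is again a generalised quadrangle, of order $(t,s)$: its points are the lines of $\Delta$, its lines are the points of $\Delta$, and two ``points'' of $\Delta^\vee$ (i.e.\ lines of $\Delta$) are collinear in $\Delta^\vee$ precisely when the corresponding lines of $\Delta$ meet in a point. Under this duality, a set $T$ of $t+1$ lines of $\Delta$ becomes a set of $t+1$ points of $\Delta^\vee$, and a line of $\Delta$ disjoint from every member of $T$ becomes a point of $\Delta^\vee$ opposite (non-collinear to) every member of the dual set. Hence the desired conclusion is exactly an instance of \cref{quadranglepoints} applied to $\Delta^\vee$, which has $t+1$ points per line, each point on $s+1$ lines — and here we do need $s>1$, which holds because $\Delta$ is thick; the role of the hypothesis ``$t>1$'' in the original lemma is played by ``$s>1$'' here.

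The next step is to translate the exceptional configurations of \cref{quadranglepoints} back through the duality. That lemma says the dual set of $t+1$ points either lies on a common line of $\Delta^\vee$, or forms an ovoid in a subquadrangle of $\Delta^\vee$ of order $(t/s, s)$. Dualising the first alternative: ``$t+1$ points of $\Delta^\vee$ on a common line'' means ``$t+1$ lines of $\Delta$ through a common point'', which is the first exceptional case in the statement. For the second alternative, I would invoke the fact that a subquadrangle $\Gamma^\vee$ of $\Delta^\vee$ of order $(t/s,s)$ dualises to a subquadrangle $\Gamma$ of $\Delta$ of order $(s, t/s)$, and that an ovoid of $\Gamma^\vee$ is precisely a spread of $\Gamma$ (a set of lines of $\Gamma$ partitioning its point set). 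This yields exactly the second exceptional case: $T$ is a spread in a subquadrangle of $\Delta$ of order $(s, t/s)$.

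The only genuinely non-routine point — and the step I would treat most carefully — is the book-keeping of the orders under duality, making sure the subquadrangle of order $(s/t, t)$ appearing in \cref{quadranglepoints} corresponds here to one of order $(s, t/s)$: one applies \cref{quadranglepoints} to $\Delta^\vee$ of order $(t,s)$, so its subquadrangle-ovoid case has order $\bigl((t)/(s), s\bigr) = (t/s, s)$, and dualising swaps the two parameters to give $(s, t/s)$ in $\Delta$, consistent with the statement. One should also check that the degenerate possibility $s=t$ is compatible: then \cref{quadranglepoints} gives that the dual set is a line or a large hyperbolic line, and dualising a large hyperbolic line of $\Delta^\vee$ gives a spread in a subquadrangle of order $(s,1)$, i.e.\ one ``half'' of a grid — which is the $t/s=1$ instance of the second case, so no new phenomenon arises. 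With these identifications in place the lemma follows immediately, with no further computation required.
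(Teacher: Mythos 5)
Your proposal is correct and is exactly the paper's argument: the paper's entire proof reads ``This is just the dual of \cref{quadranglepoints}.'' Your careful book-keeping of the orders under duality (and the observation that thickness of $\Delta$ supplies the needed $s>1$) simply makes explicit what the paper leaves to the reader.
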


\begin{proof}
This is just the dual of \cref{quadranglepoints}.
\end{proof}
We can now show the general case.

\begin{lem}\label{MSSgeneral}
Let $\Delta$ be a finite polar space of rank $n\geq 3$ and of order $(s,t)$, with both $s$ and $t$ at least $2$. Suppose $\Delta$ does not correspond to a parabolic quadric. Then  every set $T=\{V_0,V_1,\ldots,V_t\}$ of $t+1$ maximal singular subspaces admits an opposite maximal singular subspace, except if they contain a common singular subspace $U$ either of codimension $1$ (that is, they form a line in the corresponding dual polar space $\Gamma(Q)$), or of codimension $2$, $t=s^2$, and they form a spread in a symplectic subquadrangle of order $(s,s)$, $s$ even, of $\Res_\Delta(U)$ (and then $\Delta$ corresponds to an elliptic quadric).
\end{lem}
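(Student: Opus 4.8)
The plan is to argue by induction on the rank $n$, reducing to residues that are polar spaces of smaller rank (eventually to the rank-$2$ case treated in \cref{quadranglelines}, the hyperbolic case in \cref{hypMSS}, and the parabolic case in \cref{MSSparabolic}), and to use the local-to-global criterion \cref{lemma} together with \cref{3.30} to promote locally opposite data to a genuinely opposite maximal singular subspace. Throughout, I would keep in mind that by \cref{hypMSS} and \cref{MSSparabolic} the hyperbolic and parabolic cases are already settled, so the ``new'' phenomena (the elliptic spread example) must come from a quadric of minus type; but the argument should not need to know which polar space $\Delta$ is until the very end.

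First I would split into the usual three cases according to the intersection pattern of the members of $T$. \textbf{Case 1: all $V_i$ share a common point $p$.} Here I pass to $\Res_\Delta(p)$, a polar space of rank $n-1$, in which the $V_i$ become $t+1$ maximal singular subspaces. If they do not contain a common subspace of codimension $1$ or $2$ in $\Res_\Delta(p)$, induction gives a maximal singular subspace through $p$ meeting each $V_i$ only in $p$; then, as in the proof of \cref{hypMSS}(i), take a point $p'$ opposite $p$, let $M$ be the unique maximal singular subspace through the codimension-$1$ intersection of that subspace and containing $p'$ -- more simply, build the opposite generator via \cref{lemma} starting from $A=\{p'\}$ -- and check it is disjoint from every $V_i$. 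If instead the $V_i$ \emph{do} share a codimension-$1$ or $-2$ subspace in $\Res_\Delta(p)$, that lifts to a codimension-$1$ or $-2$ common subspace of the $V_i$ in $\Delta$ containing $p$, and we are in one of the exceptional conclusions (for the codimension-$2$ subcase one still has to recognise, via \cref{quadranglelines}, that the induced configuration is a spread in a symplectic subquadrangle of order $(s,s)$ and that forces $t=s^2$ and $\Delta$ elliptic; this is exactly where the order arithmetic of \cref{MSSgeneral}'s statement gets pinned down). \textbf{Case 2: some two members meet but not all share a point.} Pick $p$ lying in at least two $V_i$ but not all; form an auxiliary set $T'$ in $\Res_\Delta(p)$ by keeping those $V_i$ through $p$ and, for each $V_j\not\ni p$, choosing a maximal singular subspace through $p$ meeting $V_j$ in codimension $2$. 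As in \cref{hypMSS}(ii), if the members of $T'$ happened to share a codimension-$1$ subspace in $\Res_\Delta(p)$ one re-chooses one of the auxiliary subspaces to destroy that; then induction in $\Res_\Delta(p)$ yields $W\ni p$ meeting each member of $T'$ only in $p$, whence $W$ meets each $V_i$ in at most a point, at most $s$ points arise among the $V_i$ (two of them project to $p$), and a hyperplane $H$ of $W$ avoids all $V_i$; the generator of the other type through $H$ is the desired opposite -- here one uses that $\Delta$ is not parabolic only insofar as one needs two generators through a submaximal subspace, which is automatic since $t\geq 2$. \textbf{Case 3: the $V_i$ are pairwise disjoint.} A counting argument produces a point $b$ in no $V_i$; set $T'$ to be the maximal singular subspaces through $b$ meeting a given $V_i$ in a hyperplane, observe (as in \cref{hypMSS}(iii)) that pairwise disjointness of the $V_i$ prevents two members of $T'$ from meeting in codimension $1$ -- so the exceptional conclusions cannot apply to $T'$ -- apply induction in $\Res_\Delta(b)$ to get $M\ni b$ meeting each member of $T'$ only in $b$, and conclude $M$ is disjoint from every $V_i$ since every point of $\cup V_i$ collinear with $b$ lies in some member of $T'$.

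The base cases are $n=3$, handled by \cref{hypMSS} (hyperbolic), \cref{MSSparabolic} (parabolic), and, for the remaining rank-$3$ polar spaces, by reducing one of the three cases above to $\Res_\Delta(p)$ of rank $2$ and invoking \cref{quadranglelines}; a little care is needed because in rank $2$ the dual object is a \emph{line} of the quadrangle, so ``maximal singular subspaces of common type through $p$'' must be matched against ``lines through a point'' correctly, and the spread-in-a-subquadrangle exception of \cref{quadranglelines} is exactly the seed of the codimension-$2$ exception here. The main obstacle, I expect, is bookkeeping in Case 1: one must show that when the induced configuration in $\Res_\Delta(p)$ is exceptional it is exceptional \emph{of the right kind} -- i.e. that a codimension-$2$ common subspace together with a regulus- or spread-type residue forces precisely $t=s^2$, $s$ even, symplectic subquadrangle of order $(s,s)$, and $\Delta$ elliptic -- and conversely that no other exceptional pattern survives the lift. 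Closing this requires combining the rank-$2$ classification (\cref{quadranglelines}, \cref{ovoidcase}) with the standard parameter counts for finite polar spaces (the order of a polar space of rank $n$ determines, and is determined by, the orders of residues), and ruling out the parabolic and hyperbolic outcomes by the hypothesis and by \cref{hypMSS}; everything else is the same three-case residue induction already rehearsed for the hyperbolic quadric in \cref{hypMSS}.
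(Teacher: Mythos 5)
Your three-case decomposition (common point / some intersections / pairwise disjoint) is borrowed from the proof of \cref{hypMSS}, but the paper's proof of \cref{MSSgeneral} is organised differently (it always picks a point $x$ outside every $V_i$ and inducts on the projections $\<x,x^\perp\cap V_i\>$ in $\Res_\Delta(x)$), and the transplant does not survive the loss of the type-$\mathsf{D}$ structure. Two of your steps use parity facts that are special to hyperbolic quadrics: "$W$ meets each $V_i$ in a subspace of even dimension" and "the generator of the other type through $H$" have no analogue in a general polar space, where two generators can meet in any dimension and a submaximal subspace lies on $t+1\geq 3$ generators. These are repairable (e.g.\ choosing $V_j'=\<p,p^\perp\cap V_j\>$, meeting $V_j$ in codimension $1$, forces $W\cap V_j\subseteq W\cap V_j'$), but you would have to redo them rather than cite \cref{hypMSS}.

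The genuine gap is in Case 3. You assert that pairwise disjointness of the $V_i$ prevents the exceptional conclusions from applying to $T'$ in $\Res_\Delta(b)$. That is true for the codimension-$1$ exception when $n\geq 4$ (an intersection count as in \cref{hypMSS}$(iii)$), but it is false for the spread-in-a-symplectic-subquadrangle exception: the members of such a spread are pairwise \emph{opposite} in the residue, so that configuration is entirely compatible with the $V_i$ being pairwise disjoint, and at $n=3$ (where $\Res_\Delta(b)$ is a quadrangle and the relevant exception in \cref{quadranglelines} is precisely a spread in a subquadrangle of order $(s,t/s)$) nothing in your argument excludes it. This "phantom exception" --- the projections forming a spread for every admissible choice of base point while the $V_i$ themselves share no common subspace --- is the actual crux of the lemma. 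The paper disposes of it by an explicit rank-$3$ elliptic-quadric argument: it shows the $V_i$ would have to be pairwise opposite planes, covers the point set of $V_2\cup\cdots\cup V_t$ by the planes $\<p_1,L_0\>$, concludes that $T$ lies in a $5$-dimensional subspace $Y$ of $\PG(7,s)$ meeting the quadric in a hyperbolic polar space of rank $3$, and derives a contradiction from the fact that such a polar space contains no three pairwise opposite planes. A closely related issue arises in your Case 1: when the residual configuration is exceptional you must still show the common subspace of the projections comes from a common subspace of the $V_i$ themselves (the paper's argument that $U_1=U_2$ by varying the base point). Without these steps the proof does not close.
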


\begin{proof}
We prove this by induction on the rank $n$ of $\Delta$, where \cref{quadranglelines} serves as our base, noting that a parabolic quadric is characterised by $s=t$ and admitting a full embedded grid. In particular, if $s=t$, then the case in  \cref{quadranglelines} of a spread in a subquadrangle does occur.

An easy count yields a point $x$ not in any of the subspaces $V_i$, $i=0,1,\ldots,t$. In $\Res_\Delta(x)$ we can now consider the maximal singular subspaces $V_i':=\<x,x^\perp\cap V_i\>$, $i=0,1,\ldots,t$. A maximal singular subspace through $x$ locally opposite all of those at $x$ is opposite every member of $T$. Hence, by induction, there are two possibilities. 
\begin{compactenum}[$(1)$]
\item  \emph{The intersection of all $V_i'$ is a subspace $W$ of dimension $n-2$.}\\ Set $U_i:=V_i\cap W$. If $U_0=U_1=\cdots=U_t$, then we can apply induction in $\Res_\Delta(U_0)$. So, without loss of generality, we may assume for a contradiction that $U_1\neq U_2$. Then we select an arbitrary point $y\in V_0'\setminus W$. The subspaces $\<y,y^\perp\cap V_0\>=V_0'$ and $\<y,y^\perp\cap V_1\>\supseteq\<y,U_1\>$ intersect in the $(n-2)$-dimensional singular subspace $\<y,U_1\>$ and hence induction implies that $\<y,U_2\>=\<y,U_1\>$. Consequently, $U_1=\<y,U_1\>\cap W=\<y,U_2\>\cap W=U_2$, a contradiction.  
\item \emph{The intersection of all $V_i'$ is a subspace $W$ of dimension $n-3$, and $W$ is also the pairwise intersection of the $V_i$.}\\ Here, the same arguments as in the previous paragraph $(1)$ do the job, except if $n=3$, since in this case $W=\{x\}$. So, we have to do the case $n=3$ separately and explicitly (in another way). In this case, $\Delta$ arises from an elliptic quadric, $t=s^2$, $s$ is even, all planes $V_i'$ are locally opposite each other and form a spread in a subquadrangle of order $(s,s)$ of $\Res_\Delta(x)$. Suppose for a contradiction that the members of $T$ are not pairwise opposite and let $V_0$ not be opposite $V_1$. Then an easy count yields a point $x'\perp V_0\cap V_1$ not contained in any member of $T$. Switching the roles of $x$ and $x'$ brings us back to Case (1). Hence all members of $T$ are pairwise opposite, that is, disjoint. Let $L_0$ be any line of $V_0$ and let $\{p_1\}=L_0^\perp\cap V_1$. If some point $x''\in \<p_1,L_0\>$ does not belong to any member of $T$, then we can switch the roles of $x$ and $x''$ and this brings us back to Case (1). Hence every point of $\<p_1,L_0\>$ belongs toi some member of $T$. Varying $L_0$ in $V_0$, we obtain $(s^2-1)(s^2+s+1)$ points, which then must cover all points of $V_2\cup V_3\cup\cdots\cup V_t$. If we view $\Delta$ as an elliptic quadric in $\PG(7,s)$, then $T$ is contained in the $5$-dimensional subspace $Y$ of $\PG(7,s)$ generated by the planes $V_0$ and $V_1$. Since $Y$ intersects $\Delta$ in a hyperbolic polar space of rank $3$, and such a polar space does not contain three pairwise opposite planes, this leads to a contradiction.
\end{compactenum}
The proof is complete.
\end{proof}

We leave it to the reader to check that the second case of the previous lemma does occur, that is, if  $\Delta$ corresponds to an elliptic quadric, a set $T$ of $t+1$ generators containing a common singular subspace $U$ of dimension $n-3$ forms a spread in a symplectic subquadrangle of order $(s,s)$, $s$ even, of $\Res_\Delta(U)$, then no generator of $\Delta$ is opposite each member of $T$. Likewise, the second case of the next lemma also really occurs. 

\begin{lem}
Let $\Delta$ be finite polar space of rank $n\geq 3$, order $(s,t)$, and suppose $\Delta$ is not symplectic. Let $T=\{\alpha_{0}, \dots, \alpha_{s}\}$ be a set of $s+1$ different singular subspaces of common dimension $\ell\leq n-2$ in $\Delta$, with $\ell\neq n-2$ if $t=1$, such that they do not form a pencil in a singular space of dimension $\ell+1$ (and we also assume $\ell\geq 1$) and such that, if $\Delta$ is a small Hermitian polar space and $\ell=n-2$, $T$ is not an ovoid in a symplectic subquadrangle of order $(t,t)$ in the residue of some $(\ell-1)$-dimensional subspace of $\Delta$ contained in each member of $T$. Then there exists a singular subspace opposite all of $\alpha_{0}, \dots, \alpha_{s}$ in $\Delta$.
\end{lem}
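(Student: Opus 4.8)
The plan is to argue by induction on the rank $n$, running the same scheme as in the proofs of \cref{An}, \cref{hypMSS} and \cref{MSSgeneral}: pass to a suitably chosen residue, apply the induction hypothesis there (or one of \cref{polarpoints}, \cref{hyppoints}, \cref{quadranglepoints}, or, inside a singular subspace, the projective case \cref{An}), and transport the opposite subspace thus obtained back to $\Delta$ by \cref{lemma}. The key structural remark is that the residue of a point of a non-symplectic classical polar space is again non-symplectic — for rank $\geq 3$ the only coincidence is $Q(2n,q)\cong W(2n-1,q)$ with $q$ even, which is a symplectic polar space and hence excluded by hypothesis — so the induction stays within the scope of the lemma; and for the rank-$2$ residues which appear when $\ell=n-2$, the hypothesis ``$\Delta$ not symplectic'' is precisely what kills the spurious exceptions of \cref{quadranglepoints} (for the remaining polar spaces $s/t$ is not a positive integer, or one meets a $Q(4,q)$ with $q$ odd, which has no large hyperbolic line by \cref{ovoidcase}), leaving only the small Hermitian case in characteristic~$2$, which is the listed exception.

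First I would dispose of the case $\bigcap_i\alpha_i\neq\emptyset$. Fix a point $p$ in the common intersection and work in $\Res_\Delta(p)$, a non-symplectic polar space of rank $n-1$; there the $\alpha_i/p$ are distinct $(\ell-1)$-spaces, and I apply the induction hypothesis if $\ell\geq 2$, and \cref{polarpoints} (or \cref{hyppoints} if $\Delta$ is hyperbolic, or \cref{quadranglepoints} if $n=3$) if $\ell=1$. A common opposite $(\ell-1)$-space $\bar\gamma$ lifts to $\gamma:=\langle p,\bar\gamma\rangle$, which is opposite every $\alpha_i$ by \cref{lemma} with $A=\{p\}$ (note $\gamma\cap\alpha_i=\{p\}$ since opposite subspaces are disjoint). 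If instead the $\alpha_i/p$ form a pencil, then so do the $\alpha_i$ in $\Delta$ and we are in the conclusion; and if $\Res_\Delta(p)$ is small Hermitian in characteristic~$2$ with $\ell-1$ equal to its rank minus $2$ and the $\alpha_i/p$ form an ovoid in a symplectic subquadrangle, this is exactly the exception in the statement.

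For the remaining case $\bigcap_i\alpha_i=\emptyset$ I would pick, by a counting argument using $\ell\leq n-2$ (so that $\bigcup_i\alpha_i$ together with $\bigcup_i\alpha_i^\perp$ does not cover the point set of $\Delta$), a point $x\notin\bigcup_i\alpha_i$ with $x\not\perp\alpha_i$ for all $i$; then $H_i:=x^\perp\cap\alpha_i$ is a hyperplane of $\alpha_i$, and $\alpha_i^\ast:=\langle x,H_i\rangle$ is an $\ell$-space through $x$. Passing to $\Res_\Delta(x)$ the images $\alpha_i^\ast/x$ are $(\ell-1)$-spaces, to which the induction hypothesis applies (or \cref{polarpoints}/\cref{hyppoints}/\cref{quadranglepoints} when $\ell=1$), padding by an arbitrary further vertex if some of the traces coincide. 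A common opposite $\bar\gamma$ lifts, via \cref{lemma} with $A=\{x\}$ and hence $B=H_i$, to an $\ell$-space $\gamma$ opposite every $\alpha_i$: the only nontrivial hypothesis of \cref{lemma} to verify, namely that $\langle x,\alpha_i\rangle$ be non-singular, is exactly $x\not\perp\alpha_i$. If the induction returns a pencil or a small-Hermitian ovoid in $\Res_\Delta(x)$, I would argue as in \cref{MSSgeneral}: writing $W$ for the span of the traces $\alpha_i^\ast$ and setting $U_i:=\alpha_i\cap W$, one recurs in $\Res_\Delta(U_0)$ when the $U_i$ stabilise (recognising, where appropriate, the listed exception), derives a contradiction from a second, cleverly positioned auxiliary vertex when they do not, and treats the boundary situation $\ell=n-2$ (the $\alpha_i$ submaximal in one generator) separately by passing to an opposite generator and invoking \cref{An} and the structure of reguli.

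I expect the main obstacle to be precisely this last bookkeeping: an exceptional output of the induction hypothesis in a residue need not correspond to an exceptional configuration of $T$ in $\Delta$, so one must either re-select the auxiliary vertex or track, through the (at most two) nested reductions, how a pencil, regulus, ovoid or spread in the residue sits inside $\Delta$, matching it against the precise list in the statement and repeatedly invoking ``$\Delta$ not symplectic''. A secondary and more routine difficulty is checking that the counting which produces $x$ still works in the tightest instances — the rank-$3$ elliptic and Hermitian polar spaces — and that the degenerate possibility $x\perp\alpha_i$ can always be excluded.
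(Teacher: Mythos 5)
Your overall toolkit is the right one (residue reductions, \cref{lemma} to globalise local opposition, Bose--Burton/\cref{polarpoints}/\cref{quadranglepoints} at the bottom), and your preliminary observations are sound: point-residues of non-symplectic classical polar spaces stay non-symplectic, the hypothesis ``not symplectic'' kills most rank-$2$ exceptions, and the small Hermitian ovoid survives as the listed exception. But the proposal has a genuine gap exactly where you flag ``the main obstacle'', and that obstacle is not bookkeeping --- it is the entire content of the lemma. In your second case, when the induction in $\Res_\Delta(x)$ returns an exceptional configuration, it does so for the subspaces $\alpha_i^\ast=\langle x, x^\perp\cap\alpha_i\rangle$, not for the $\alpha_i$: each $\alpha_i$ is its trace $H_i=x^\perp\cap\alpha_i$ together with one further point \emph{not} collinear to $x$, and that point is completely unconstrained by the residue data. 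So a pencil or ovoid among the $\alpha_i^\ast$ tells you essentially nothing about $T$ itself, and ``arguing as in \cref{MSSgeneral}'' does not transfer: that lemma concerns generators, where opposition is disjointness and a locally opposite generator meets each $V_i$ in a single point, which is what drives its two-case analysis. Here one must show that if \emph{every} admissible auxiliary point yields an exception in its residue, then $T$ is a pencil or the Hermitian ovoid --- and a single $x$, or ``a second, cleverly positioned auxiliary vertex'', gives no structured family over which to vary.

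The paper resolves this by first building that structure: it extends each $\alpha_i$ to an $(\ell+1)$-space $\beta_i$ and uses the induction (on $n-\ell$) to find a singular $(\ell+1)$-space $\beta$ opposite all $\beta_i$, so that each $\alpha_i$ projects to a single point $p_i\in\beta$. If the $p_i$ are not collinear, Bose--Burton inside $\beta$ finishes; otherwise they form a line $L$, and every point $b\in\beta\setminus L$ is automatically non-perpendicular to every $\alpha_i$ and plays the role of your $x$. The crux of the proof is then a classification of the points of $\beta\setminus L$ into types (L) (pencil in the residue) and (O) (ovoid in the residue), followed by a four-case analysis of how these types can be distributed over $\beta\setminus L$ (Steps II and III, about two pages), which is what actually forces $T$ into the stated exceptional shapes or produces a contradiction. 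This is the part your proposal replaces by a gesture, and without the plane $\beta$ and the line $L$ as a coordinate frame it is not clear your scheme can be completed short of reconstructing that argument.
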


\begin{proof}
Let, for given order $(s,t)$ and rank $n$, the polar space $\Delta$ and $\alpha_0,\ldots,\alpha_s$ be a smallest (with respect to $n-\ell$) counterexample to the lemma. 

We claim that there exists a singular subspace $\beta$ of dimension $\ell+1$ such that $\proj_\beta(\alpha_i)$ is a point $p_i$, for all $i\in\{0,1,\ldots,s\}$. Indeed, first assume $\ell\leq n-3$. We select arbitrarily singular subspaces $\beta_i$ of dimension $\ell+1$ containing $\alpha_i$, for all $i$. We can easily choose them in such a way that they do not have a subspace of dimension $\ell-1$ in common. The induction hypothesis implies that we find a singular subspace $\beta$ of dimension $\ell$ opposite each $\beta_i$, $i=0,1,\ldots,s$. Then $\beta$ satisfies the claim. Now suppose $\ell=n-2$, and note that in this case we assume $t>1$ (as submaximal singular subspaces do not belong to the oriflamme geometry). We proceed by induction on the rank $n$, including the case of rank $2$. If $n=2$, then the number of lines not disjoint from $T$ is at most $(s+1)(t+1)$. Since there are $(1+t)(1+st)$ lines in total, and $t>1$, the claim follows. Now let $n\geq 3$. Clearly, we can find a point $x$ not contained in any member of $T$. Applying induction in $\Res(x)$ proves the claim. 


So, there is a unique point $p_i$ in $\beta$ collinear to all points of $\alpha_i$. If $\{p_0,p_1,\ldots,p_s\}$ is not a line, then by \cref{An} we find a subspace $\alpha\subseteq\beta$ of dimension $\ell$ not containing any of the points $p_0,p_1,\ldots,p_s$. Then $\alpha$ is opposite each member of $T$ and we are done.

So we may assume that $\{p_0,p_1,\ldots,p_s\}$ is a line $L$. The set of points of $\beta_i$ collinear to all points of $L$ is a subhyperplane $H_i$ contained in $\alpha_i$ (as a hyperplane of the latter).  

\textbf{Step I.} Suppose $\ell\leq n-3$. We claim that all points of $\alpha_i\setminus H_i$ are collinear to all points of $\alpha_j\setminus H_j$, for distinct $i,j\in\{0,1,\ldots,s\}$. Indeed, we may assume $(i,j)=(0,1)$. Let $x_0\in\alpha_0\setminus H_0$ and $x_1\in\alpha_1\setminus H_1$. Then $x_0^\perp\cap \beta=:K_0$ and $x_1^\perp\cap\beta=:K_1$ are two hyperplanes of $\beta$ none of which containing $L$, but containing $p_0$ and $p_1$, respectively. It follows that $N:=K_0\cap K_1$ is a subspace of dimension $\ell-1\geq 0$ disjoint from $L$.  The line $N^\perp\cap \beta_i$ intersects $\alpha_i$ in a unique point $x_i$ as otherwise $N$ belongs to the perp of a point of $H_i$, contradicting the facts that also $L$ belongs to the perp, that $L$ and $N$ are complementary, and $\beta$ and $\beta_i$ are opposite (and note that the notation $x_i$ is in conformity with the definitions of $x_0$ and $x_1$ above), We may hence view $x_0,x_1,\ldots.x_s$ as points of $\Res_\Delta(N)$. If they do not constitute a line in $\Res_\Delta(N)$, then by \cref{polarpoints} we can find a point opposite all of them, meaning using \cref{lemma}, we can find a subspace $\alpha$ of dimension $\ell$ containing $N$ opposite each of $\alpha_i$, $i\in\{0,1,\ldots,s\}$. Hence all of $x_0,x_1,\ldots,x_s$ are collinear and the claim is proved. 

The previous claim now easily implies that all members of $T$ are contained in a common singular subspace, say $M$, which we may assume to be maximal. Since by assumption, they do not form a Grassmann line in $U$, \cref{An} yields a subspace $U\subseteq M$ of dimension $n-2-\ell$ disjoint from all $\alpha_i$. Let $M'$ be a maximal singular subspace of $\Delta$ opposite $M$. Then $U^\perp\cap M$ has dimension $\ell$ and is opposite each member of $T$ (use \cref{lemma} again).

\textbf{Step II.} Now suppose $\ell=1$ and $n=3$. 
 Note that, if all elements of $S$ intersect in a common point $p$, our assumptions yield a line $M$ locally opposite every element of $S$ in $\Res(p)$. Then on $M$ we can choose a point $q \neq p$. Let $N$ be a line locally opposite $M$ in $\Res(q)$. Then the line $N$ is opposite each element of $S$.


Recall that the points $p_0,\ldots,p_s$ form a line $L$ in the plane $\beta$. Let $b$ be an arbitrary point in $\beta \setminus L$. The projection of $b$ onto $\alpha_{i}$ is a point $b_{i}$. We can find a line $N$ locally opposite all lines $bb_i$ at $b$, except if these lines form a line or an ovoid in a subquadrangle of order $(s/t,t)$ of $\Res(b)$. Then, in $\Delta$, the line $N$ is opposite all $L_{i}$.

Hence, for all $b\in\beta\setminus L$, we may suppose that the lines $bb_{i}$ form a line or an ovoid in a subquadrangle of order $(s/t,t)$ of $\Res(b)$. In the former case, we say that $b$ has type (L), in the latter case we say that $b$ has type (O). 
We consider four possibilities.

\begin{compactenum}[$(i)$]
\item 
If every point of $\beta\setminus L$ has type (L), then, as in Step I, all points of $\alpha_i\setminus H_i$ are collinear to all points of $\alpha_j\setminus H_j$, for all $i,j\in\{0,1,\ldots,s\}$. This easily implies that all members of $T$ are contained in a unique plane $\gamma$. Clearly, each member of $T$ contains $L^\perp\cap\gamma$, leading to $T$ being a line pencil in $\gamma$, a contradiction.

\item If each point of $\beta\setminus L$ has type (O), then no point of $L_i\setminus H_i$ is collinear to any point of $L_j\setminus H_j$, $i\neq j$, and so, each point $H_i$ is collinear to each line $L_j$. In particular, all $H_i$ are collinear. Since they are collinear to $L$, they lie in the same plane $\delta$ with $L$. 

Since $L_i$ is collinear to $p_i,H_i$ and $H_j$, and not to $p_j$, $i\neq j$, we deduce that $H_j\in\<p_i,H_i\>$, Switching the roles of $i$ and $j$, we find $H_i=\<p_i,H_i\>\cap\<p_j,H_j\>=H_j$. Hence all members of $T$ share a common point and the first paragraph of Step II concludes this case. 



\item Next, suppose that every point in $\beta \setminus L$, except one (say $b$), has type (O). Note that by \cref{ovoidcase} $s=t^2\geq 4$. We consider $L_0$ and $L_1$ and set $b_i=b^\perp\cap L_i$, $i=0,1$. Then $b_0\perp b_1$. As in $(ii)$, all other points of $L_0\setminus H_0$ have to be collinear to $H_1$, implying, since $s\geq 4$, that $H_1\perp L_0$. In particular $H_1\perp b_0$ and so $b_0\perp L_1$. Likewise $H_0\perp L_1$ and $b_1\perp L_0$. hence $L_0\perp L_1$ contradicting the existence of points of type (O). Hence this case cannot occur.

\item Lastly, we consider the case that at least one point $d\in\beta \setminus l$ has type (O) and at least two points $b,c\in\beta\setminus L$ have type (L). Again $s=t^2\geq 4$.  Set $b_i=b^\perp\cap L_i$, $c_i=c^\perp\cap L_i$ and $d_i=d^\perp\cap L_i$, $i=0,1,\ldots,s$. Suppose, without loss, that $bc\cap L=p_0$. Then $b_0=c_0$ and $b_0\perp L_i$, $i=1,2,\ldots,s$. So every point of $bc\setminus\{p_0\}$ has type (L). This argument also implies that every point of $\beta\setminus(bc\cup L)$ has type (O). This now implies that each point of $L_0\setminus\{b_0\}$ is only collinear to the point  $H_i$ of $L_i$. Hence $H_i\perp L_0$ implying $H_0\perp H_i$, for all $i=1,2,\ldots,s$. Hence all $H_i$ are contained in the same plane $\delta$ with $L$. Since $b_i\perp b_j$, but $b_i$ is not collinear to $c_j$, we have $H_i\neq H_j$, $i,j\in\{1,2,\ldots,s\}$, $i\neq j$. Also, since $L_0\perp p_0,H_i$, for all $i$, all $H_i$ are contained in one line $H$ of $\delta$. Since $H\setminus\{p_0\}$ contains $s$ points, we may without loss assume that $H_0=H_1$. But then $L_0\perp L_1$, a contradiction. Hence this case does not occur and the proof of Step II is complete.
\end{compactenum}

\color{black}
\textbf{Step III.} Finally suppose $\ell=n-2$ and $n\geq 4$.  Let $U$ be an $(n-4)$-dimensional subspace of $\beta$ disjoint from $L$ and let $W\supseteq U$ be an $(n-3)$-dimensional subspace of $\beta$, also disjoint from $L$. If $A:=\{\<U,U^\perp\cap \alpha_i\>\mid i=0,1,\ldots,s\}$, considered as a set of lines of $\Res_\Delta(U)$, is neither a planar line pencil nor a set of lines through some point $x$ forming an ovoid in a symplectic ideal subquadrangle of the point residual at $x$, then we can select an $(n-2)$-dimensional subspace $\alpha^*$ through $U$ locally opposite each member of $A$. Then $\alpha^*$ is opposite each member of $T$ (using \cref{lemma} again). Hence we may assume that $A$ is either a planar line pencil (we say $A$ is of type (PLP)), or a a set of lines through some point $x$ forming an ovoid in a symplectic ideal subquadrangle of the point residual at $x$ (and we say $A$ is of type (OSS)). In the former case, the set $B:=\{\<W,W^\perp\cap \alpha_i\>\mid i=0,1,\ldots,s\}$ is a line in the generalised quadrangle $\Res_\Delta(W)$ (and we say that $W$ is of type (L)); in the latter case $B$ is an ovoid in a symplectic ideal subquadrangle of $\Res_\Delta(W)$ (and we say $W$ is of type (O)). It follows that, in the former case all subspaces of $W$ of dimension $n-4$ are of type (PLP), and in the latter case all such subspaces are of type (OSS). Now varying $W$ we easily deduce that all $(n-2)$-dimensional subspaces of $W$ disjoint from $L$ are either of type (L), or of type (O). Now we continue just like in Step II and arrive at contradictions. 

This concludes the proof of the lemma.  
\end{proof}

It remains to consider symplectic polar spaces. It turns out that type $2$ elements, that is, lines, cannot be included in the general proof. We will treat them separately. However, the final proof is inductive and the result for lines in rank $3$ is needed to prove the general case; which is then used to prove the result for lines in higher rank. This explains the rather peculiar conditions in the next lemma, which shall become clear in the proof of \cref{patch} below.

\begin{lem}\label{symplines}
Let $\Delta$ be a symplectic polar space of rank at least $3$ and order $(s,s)$. Suppose that every set of $s+1$ singular planes admits a common opposite plane, except if they all intersect in a line and are contained in a common $3$-space of the underlying projective space. Then a set $T$ of $s+1$ lines of $\Delta$ admits a common opposite in $\Delta$ if, and only if,  $T$ is not a line pencil in some plane of the underlying projective space.
\end{lem}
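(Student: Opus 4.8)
The plan is to establish both implications directly. For the (easy) backward implication, suppose $T$ is a line pencil with vertex $p$ in a plane $\tau$ of the underlying projective space. Every member of $T$ passes through $p$, so $\tau\subseteq p^{\perp}$; and for an arbitrary line $K$ of $\Delta$ one checks that $K$ is opposite every member of $T$ precisely when $K\cap p^{\perp}=\emptyset$ (if $y\in K$ satisfies $y\perp p$, then $y^{\perp}\cap\tau$ contains a line through $p$, hence a member of the pencil, so $K$ meets its perp). Since a line of the projective space always meets the hyperplane $p^{\perp}$, no such $K$ exists. For the forward implication I would assume $T=\{L_{0},\dots,L_{s}\}$ has no common opposite line, and split according to whether or not the $L_{i}$ share a common point.

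First I would treat the case in which all $L_{i}$ contain a common point $p$. Passing to $\Res_{\Delta}(p)$, a symplectic polar space of rank $n-1\geq 2$, the $L_{i}$ become distinct points $\bar L_{i}$. If $\{\bar L_{0},\dots,\bar L_{s}\}$ is a line or a hyperbolic line of $\Res_{\Delta}(p)$, then, using that hyperbolic lines of a symplectic polar space are full projective lines of the ambient space, $T$ is exactly the set of lines of $\Delta$ through $p$ in a plane $\sigma$ with $p\in\sigma\subseteq p^{\perp}$, i.e.\ a line pencil in a plane, and we are done. Otherwise \cref{polarpoints} (if $n-1\geq 3$) or \cref{quadranglepoints} (if $n-1=2$) produces a point of $\Res_{\Delta}(p)$ opposite every $\bar L_{i}$, that is, a line $K\ni p$ of $\Delta$ locally opposite every $L_{i}$ at $p$. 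Choosing $q\in K\setminus\{p\}$, local opposition forces $q\notin L_{i}^{\perp}$ (otherwise $\langle q,L_{i}\rangle=\langle K,L_{i}\rangle$ would be totally singular), so $q^{\perp}\cap L_{i}=\{p\}$ for each $i$. As the perp of a point is a hyperplane, one then has $\langle q,\,L_{i}^{\perp}\cap q^{\perp}\rangle=(qp)^{\perp}=K^{\perp}$ for every $i$, so a line $N\ni q$ of $\Delta$ is opposite $L_{i}$ if and only if the image of $N$ in $\Res_{\Delta}(q)$ is opposite the image of $K$ --- one condition, independent of $i$. Since $\Res_{\Delta}(q)$ has a point off the hyperplane $K^{\perp}$, such an $N$ exists, contradicting our assumption; thus the concurrent case produces only line pencils in planes.

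Next I would treat the case in which the $L_{i}$ have no common point. Here I would pick planes $\beta_{i}\supseteq L_{i}$ of $\Delta$ in sufficiently general position that they have no common point (possible exactly because the $L_{i}$ have none; e.g.\ if $L_{i}\cap L_{j}=\emptyset$ one may take $\beta_{i}$ and $\beta_{j}$ disjoint). In particular $\{\beta_{0},\dots,\beta_{s}\}$ is not the exceptional configuration of the hypothesis, so some plane $\beta$ of $\Delta$ is opposite every $\beta_{i}$. As $L_{i}$ is a hyperplane of $\beta_{i}$ and $\beta$ is opposite $\beta_{i}$, the set $\proj_{\beta}(L_{i})=L_{i}^{\perp}\cap\beta$ is a single point $p_{i}$ of $\beta$. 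If $\{p_{0},\dots,p_{s}\}$ is not a line of $\beta$ (in particular if there are fewer than $s+1$ of them), then by \cref{An}, or by a count of lines in $\PG(2,s)$, some line $\alpha\subseteq\beta$ avoids all the $p_{i}$; then $\alpha\cap L_{i}^{\perp}=\alpha\cap(\beta\cap L_{i}^{\perp})=\alpha\cap\{p_{i}\}=\emptyset$, so $\alpha$ is a common opposite, a contradiction. Hence $\{p_{0},\dots,p_{s}\}$ is a line $L$ of $\beta$. For $b\in\beta\setminus L$ we have $b\neq p_{i}$, so $b\notin L_{i}^{\perp}$, so $b_{i}:=\proj_{L_{i}}(b)=b^{\perp}\cap L_{i}$ is a point and $bb_{i}$ is a line of $\Delta$ through $b$; and the identity $\langle b,\,L_{i}^{\perp}\cap b^{\perp}\rangle=(bb_{i})^{\perp}$ shows that a line $N\ni b$ of $\Delta$ is opposite $L_{i}$ if and only if $N$ is locally opposite $bb_{i}$ at $b$. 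Therefore, if for some $b$ the points $\overline{bb_{0}},\dots,\overline{bb_{s}}$ of $\Res_{\Delta}(b)$ formed neither a line nor a hyperbolic line, then \cref{polarpoints} or \cref{quadranglepoints} would yield a line of $\Delta$ through $b$ opposite every $L_{i}$, a contradiction. So every $b\in\beta\setminus L$ is of type~(L), meaning the $\overline{bb_{i}}$ form a line, or of type~(O), meaning they form a hyperbolic line.

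To finish I would run the same kind of case distinction as in Step~II of the preceding lemma --- treating in turn the possibilities that every point of $\beta\setminus L$ has type~(L), that every such point has type~(O), that all but one have type~(O), and that the types are genuinely mixed --- and derive a contradiction in each, using throughout that the $L_{i}$ have no common point (which excludes the surviving pencil configuration). This final case analysis is the step I expect to be the main obstacle: as in Step~II above, each sub-case requires tracking the projections of the $L_{i}$ onto the relevant lines and combining several collinearity constraints. The other, specifically symplectic, ingredient --- which I would isolate first as a short lemma --- is the equivalence ``$N$ opposite $L_{i}$'' $\Longleftrightarrow$ ``$N$ locally opposite $b\,\proj_{L_{i}}(b)$ at $b$'' for a line $N$ of $\Delta$ through a point $b\notin L_{i}^{\perp}$, since this is what allows the whole question to be pushed into point-residues, where \cref{polarpoints} and \cref{quadranglepoints} apply.
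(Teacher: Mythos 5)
Your setup faithfully reproduces the first half of the paper's argument: the backward implication, the reduction of the concurrent case to a point-residue via \cref{polarpoints}/\cref{quadranglepoints} followed by the two-step construction through $q\in K\setminus\{p\}$, the choice of planes $\beta_i\supseteq L_i$ avoiding the exceptional hypothesis, the projection points $p_i=L_i^\perp\cap\beta$, the Bose--Burton step when the $p_i$ are not collinear, and the observation (via \cref{lemma}) that opposition to $L_i$ for a line through $b$ is equivalent to local opposition to $bb_i$ at $b$. All of this is correct and is essentially what the paper does.

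The gap is the final step, which you explicitly defer: deriving a contradiction once every $b\in\beta\setminus L$ yields a full line pencil $\{bb_i\}$ in a plane $\pi_b$. Your plan to ``run the same kind of case distinction as in Step II of the preceding lemma'' does not go through as stated, for two reasons. First, that lemma explicitly excludes symplectic polar spaces, and its cases $(iii)$ and $(iv)$ rest on the fact that a type~(O) point forces $s=t^2\geq 4$ (via \cref{ovoidcase}); here $s=t$ and $s$ may be $2$ or $3$, so those arguments collapse --- indeed the paper needs a separate explicit coordinatisation of a rank-$3$ symplectic space precisely to dispose of $s=2$. Second, your dichotomy (L)/(O), i.e.\ $\pi_b$ singular or not, is not the dichotomy the paper uses at this point: the paper instead asks for which base points $b$ the projections $b_0,\ldots,b_s$ lie on a common line $K_b$, shows that at most one such $b$ can exist (otherwise the $L_i$, $i\geq 1$, are forced into a plane $\gamma=\langle K_b,K_c\rangle$ and one argues directly there), and then, after reducing to the case $L_0\cap L_1=\emptyset$, obtains a contradiction from the dimensions of the spans $\langle\pi_b,\pi_c\rangle$ for varying $b,c$ (a degenerate-radical argument for dimension $4$, an absurd containment of $\beta$ in $\langle L_0,L_1\rangle$ otherwise, and the coordinate computation for $s=2$). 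None of this is present or clearly replaceable by your sketch, so the heart of the proof is missing.
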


\begin{proof}
Let $T$ be a set of $s+1$ lines of the symplectic polar space $\Delta$ of rank $r$ naturally embedded in $\PG(2r-1,s)$, $r\geq 3$. Suppose $T$ is not a line pencil in some plane of $\PG(2r-1,s)$. We show that there exists a line of $\Delta$ opposite all members of $T$. As usual, we set $T=\{L_0,L_1,\ldots, L_s\}$. 

We include $L_i$ in a plane $\alpha_i$ in such a way that the $\alpha_i$ do not contain a common line (which can be easily accomplished). Let $\beta$ be a plane opposite all $\alpha_i$, $i\in\{0,1,\ldots,s\}$. Set $m_i:= L_i^\perp\cap\beta$. If $\{m_i|i\in\{0,1,\ldots,s\}\}$ is not a line, then we can find a line $L$ in $\beta$ not containing any of the $m_i$ and hence opposite all of the $L_i$. So the $m_i$ constitute a line $M$. Let $b\in\beta\setminus M$ be arbitrary. Then $b_i:=b^\perp\cap L_i$ is a unique point. Suppose the lines $bb_i$ do not form a line pencil in a plane of $\PG(2r-1,s)$. Then we can find a line $L$ through $b$ locally opposite all of the $bb_i$.  Then $L$ is opposite all of the $L_i$ by \cref{lemma}. Hence we may assume that the lines $bb_i$ form a line pencil in a plane $\pi_b$ of $\PG(2r-1,s)$. Suppose now that for two choices of $b\in \beta\setminus M$, the points $b_i$ are contained in a common line $K_b$ of $\PG(2r-1,s)$. Let $b,c$ be those two points and adapt the same notation for $c$ as we introduced for $b$. Without loss of generality, we may assume that the line $bc$ contains the point $m_0$. Then the lines $L_i$, $i\in\{1,2,\ldots,s\}$ are contained in the plane $\gamma$ of $\PG(2r-1,s)$ spanned by $K_b$ and $K_c$. The point $b_0=c_0$ is also contained in $\gamma$. 

Suppose first that also $L_0$ is contained in $\gamma$. Then, since we assumed that $T$ is not a line pencil, it is easy to see that $\gamma$ is a singular plane, and it contains a point $x$ not on any of the $L_i$. Then the line $x^\perp\cap \gamma'$, with $\gamma'$ a plane opposite $\gamma$ in $\Delta$ is opposite each member of $T$. 

Suppose now that $L_0$ is not contained in $\gamma$. Let $z:=L_1\cap L_2$. Then $z\perp b_0$ as $z\perp\{b_1,b_2\}\subseteq K_b\ni b_0$. We can select a singular plane $\beta_z$ containing $zb_0$ and such that $L_0$ is not collinear to $\beta_z$, and $\beta_z$ is not in a common singular $3$-space with $\gamma$ if singular. 

In $\beta_z\setminus zb_0$ we can find a point $y$ not collinear to $b_1$. It follows that $y$ is not collinear to any of the $b_i$, $i\in\{1,2\ldots,s\}$. The lines joining $y$ with the unique projection point of $y$ on the $L_i$ do not form a line pencil in any plane as two of these lines coincide (namely, the line $yz$ joins $y$ with the point $z$ of both $L_1$ and $L_2$). Hence we can find a line $L$ through $y$ locally opposite all these lines. Again, by \cref{lemma}, $L$ is opposite each member of $T$. 

Hence we may assume that for at most one point $b\in\beta\setminus M$, the points  $b_0,b_1,\ldots,b_s$ are on one line (and we denote that point, if it exists, from now on with $b^*$). Note that we may also assume that $L_0$ and $L_1$ do not intersect. Indeed, if all members of $T$ pairwise intersect, then either they are contained in a plane, and we treated that case above, or they all contain a common point $p$. In the latter case the result follows from considering the residue at $p$ (indeed, we can then select a line $K$ locally opposite each member of $T$; then select a point $q\in K\setminus\{p\}$ and a line $L$ locally opposite $K$ at $q$. The line $L$ is opposite each member of $T$ by \cref{lemma}).

First let $s>2$. Choose points $b,c\in\beta\setminus M$ with $m_2\in bc$ and $b^*\notin\{b,c\}$. As before, the lines $L_0$ and $L_1$ are contained in $\<\pi_b,\pi_c\>$, which is a $4$-space $U_2$ of $\PG(2r-1,s)$. If we choose $d\in\beta\setminus M$ with $m_3\in bd$ and $d\neq b^*$, then we obtain likewise that $L_0$ and $L_1$ are contained in a $4$-space $U_3$ of $\PG(2r-1,s)$ spanned by $\pi_b$ and $\pi_d$. Suppose first these two $4$-spaces coincide.  Then $U_2=U_3$ contains $\beta$. The polar space induced in $U_2$ is degenerate, but, as we have opposite lines ($L_0$ and some line in $\beta$), the radical is a point, which must coincide with the intersection of any pair of singular planes, contradicting $m_0\neq m_1$. Hence $U_2\neq U_3$ and these intersect in a $3$-space containing $L_0,L_1$ and $b$. However, $L_0$ and $L_1$ already span $U_2\cap U_3$, and by the arbitrariness of $b$, the $3$-space $\<L_0,L_1\>$ contains $\beta$, which is ridiculous as $\beta$ is disjoint from $L_0\cup L_1$. 

Now let $s=2$. Similar arguments as in the previous paragraph show that $W:= \<L_0,L_1,L_2\>$ contains $\beta$. Then again $\dim W=3$ leads to the contradiction that $L_0\cap \beta$ is non-empty and if $\dim W=4$, then we have a degenerate symplectic polar space induced in $W$, leading to the same contradiction as before. Hence $\dim W=5$. Also as before, $W$ is a non-degenerate symplectic polar space. We coordinatise $W$ as follows (using obvious shorthand notation). The two points on $L_0$ not collinear to all points of $M$ are labelled $100000$ and $010000$. Likewise those on $L_1$ and $L_2$ by $001000$, $000100$ and $000010$, $000001$, respectively. We may assume that the points $100000$, $001000$ and $000010$ are together in a plane $\pi_b$, $b\in \beta$, and then $b$ has labels $101010$. Similarly we have the points $100101$, $011001$ and $010110$ in $\beta\setminus M$. It follows that $m_0=001111$, $m_1=110011$ and $m_2=111100$. Then the point $111111$ is contained in each of the planes $\<m_i,L_i\>$, $i=0,1,2$, and hence collinear in $\Delta$ with all points of $L_0\cup L_1\cup L_2$, which spans $W$, contradicting non-degeneracy.

The proof is complete.
\end{proof}

\begin{lem}\label{sympi}
Let $\Delta$ be a symplectic polar space of rank  $r\geq 4$ and order $(s,s)$, and let $i\in\N$ be such that $2\leq i\leq r-2$. Suppose that every set of $s+1$ singular subspaces of dimension $i+1$ admits a common opposite, except if they all intersect in an $i$-dimensional singular subspace and are contained in a common $(i+2)$-space of the underlying projective space. Suppose also that \emph{\cref{A}} is true for symplectic polar spaces of rank at most $r-1$. Then a set $T$ of $s+1$ $i$-dimensional singular subspaces of $\Delta$ admits a common opposite in $\Delta$ if, and only if, all members $T$ are not contained in a common $(i+1)$-dimensional subspace of the underlying projective space if they contain a common $(i-1)$-dimensional singular subspace of $\Delta$. 
\end{lem}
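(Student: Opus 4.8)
The plan is to run the same machine as in the lemma on $\ell$-dimensional singular subspaces that precedes \cref{symplines}, adapted to the symplectic world and fed by the two standing hypotheses. Realise $\Delta$ as the symplectic polar space naturally embedded in $\PG(2r-1,s)$ and write $T=\{\alpha_0,\dots,\alpha_s\}$. The key preliminary observation is that in a symplectic polar space $\{x,y\}^{\perp\perp}=\langle x,y\rangle$, so a hyperbolic line of the residue $\Res_\Delta(U)$ of an $(i-1)$-dimensional singular subspace $U$ is precisely the set of $i$-dimensional singular subspaces lying between $U$ and a \emph{non}-singular $(i+1)$-dimensional projective subspace $W\supseteq U$; together with the case of singular $W$ this shows that the exceptional configuration of the statement -- the members of $T$ share a common $(i-1)$-dimensional singular subspace $U$ and a common $(i+1)$-dimensional projective subspace $W$ -- is nothing but ``$T$ is the full line pencil of $i$-spaces between $U$ and $W$'', which covers simultaneously the Lie line and the hyperbolic line of \cref{A} for symplectic polar spaces. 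For the easy direction (which also shows the exceptional sets really occur), suppose $T$ is of this form and $\alpha$ is an $i$-space opposite every member of $T$. Then $\alpha\cap U^\perp=\emptyset$: a point $x\in\alpha\cap U^\perp$ is collinear with all of $U$, so $x^\perp$ meets $W$ in all of $W$ or in a hyperplane of $W$ through $U$, i.e.\ in some $\alpha_j\in T$, whence $x$ is collinear with all of $\alpha_j$, contradicting opposition. But $\dim\alpha+\dim U^\perp>2r=\dim V$, so $\alpha\cap U^\perp\neq\emptyset$; contradiction.

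For the converse, assume $T$ is not exceptional. First suppose all members of $T$ contain a common $(i-1)$-dimensional singular subspace $U$. Then they do not all lie in a common $(i+1)$-space, so in $\Res_\Delta(U)$ -- symplectic of rank $r-i\geq 2$ -- they are $s+1$ points not on a common projective line, hence form neither a line nor a hyperbolic line; \cref{polarpoints} (if $r-i\geq 3$) or \cref{quadranglepoints} (if $r-i=2$, recalling that for order $(s,s)$ its exceptional sets are lines and large hyperbolic lines) produces an $i$-space $\alpha^*\supseteq U$ locally opposite every $\alpha_j$ at $U$, and a single perturbation -- replace $\alpha^*$ by an $i$-space locally opposite $\alpha^*$ at some $(i-1)$-space $U'\subseteq\alpha^*$ with $U'\neq U$, exactly as in the first paragraph of Step~II of the lemma preceding \cref{symplines} -- turns this into a genuine common opposite via \cref{lemma}. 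So suppose the members of $T$ do not all contain a common $(i-1)$-dimensional singular subspace. A dimension count (each $\alpha_j$ lies in at least $s+1$ singular $(i+1)$-spaces) lets us pick singular $(i+1)$-spaces $\beta_j\supseteq\alpha_j$ with $\{\beta_j\}$ not of the exceptional configuration for dimension $i+1$; the hypothesis on singular $(i+1)$-spaces then supplies a singular $(i+1)$-space $\beta$ opposite every $\beta_j$, and $\proj_\beta(\alpha_j)=\alpha_j^\perp\cap\beta$ is a single point $p_j$ for each $j$ (because $\beta_j^\perp\cap\beta=\emptyset$ while $\alpha_j^\perp$ exceeds $\beta_j^\perp$ by one dimension). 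If $\{p_0,\dots,p_s\}$ is not a projective line of $\beta$, then \cref{An} yields a hyperplane $\alpha$ of $\beta$ missing all $p_j$, so $\proj_\alpha(\alpha_j)=\emptyset$ and $\alpha$ is opposite every member of $T$; done.

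There remains the case $\{p_0,\dots,p_s\}=L$, a projective line of $\beta$ -- the heart of the argument -- handled along the lines of Steps~I, II and III of the lemma preceding \cref{symplines}. For each $j$, the points of $\beta_j$ collinear with all of $L$ form a hyperplane $H_j$ of $\alpha_j$. One then runs, over $(i-1)$-dimensional subspaces $N\subseteq\beta$ complementary to $L$, the corresponding ``free points'' $x_j\in\alpha_j$ (as in Step~I) viewed inside $\Res_\Delta(N)$, symplectic of rank $r-i$: \cref{polarpoints}, or \cref{quadranglepoints} in its large-hyperbolic-line form when $r-i=2$ (i.e.\ $i=r-2$), either hands us an $i$-space opposite all of $T$, and we stop, or forces the $x_j$ to form a line or a hyperbolic line of $\Res_\Delta(N)$ -- equivalently, forces the points of $\alpha_j\setminus H_j$ to be all collinear, respectively all non-collinear, with those of $\alpha_k\setminus H_k$ -- which labels $N$ of type $(L)$, respectively type $(O)$. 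Analysing the type-$(L)$/type-$(O)$ dichotomy over all admissible $N$, with $2\leq i\leq r-3$ parallel to Step~I and $i=r-2$ parallel to Step~III (the rank-$2$ arguments of Step~II being used inside the residues), forces in every branch that the members of $T$ lie in a common $(i+1)$-dimensional projective subspace and share a common $(i-1)$-dimensional singular subspace -- using \cref{An} to confine $T$ inside a common singular $(i+1)$-space and the hyperbolic-line description of the first paragraph to confine it inside a common non-singular one -- contradicting that $T$ is not exceptional. This completes the proof.

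The main obstacle is precisely this last case: each time \cref{polarpoints} or \cref{quadranglepoints} declines to produce an opposite inside a residue, one must trace the resulting degeneracy -- a line or a hyperbolic line -- back through every choice of the auxiliary subspace $N$ and assemble it into the single statement ``$T$ is a line pencil between an $(i-1)$-space and an $(i+1)$-space''. The two hypotheses are calibrated exactly so that this descent never leaves the symplectic polar spaces -- so that no ``ovoid in a symplectic subquadrangle'' exception appears, these being confined to the Hermitian and elliptic cases -- and so that \cref{A} is at hand one rank down; keeping track of the hyperplanes $H_j$ and of the collinearity between the free parts $\alpha_j\setminus H_j$ across the various $N$ is the technical core.
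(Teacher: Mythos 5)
Your setup matches the paper's: the easy direction via the dimension count $\dim\alpha+\dim U^\perp>2r$ is correct (and is a nice explicit verification the paper leaves implicit), the reduction to a singular $(i+1)$-space $\beta$ opposite well-chosen $\beta_j\supseteq\alpha_j$ with projection points $p_j$ is exactly the paper's first move, and the subcase where all members of $T$ share a common $(i-1)$-space is handled essentially as in the paper's closing paragraph. But the case you yourself call ``the heart of the argument'' --- the $p_j$ forming a line $L$ while the $\alpha_j$ do not all share an $(i-1)$-space --- is asserted, not proved. You defer it to ``Steps I, II and III of the lemma preceding \cref{symplines}'', but that lemma explicitly excludes symplectic polar spaces, and for a reason that bites precisely here: in its Step I, failure of \cref{polarpoints} in the residue of $N$ forces the free points $x_j$ to be mutually \emph{collinear} (the only exception being a line), whereas in a symplectic residue the hyperbolic-line exception also survives, so failure only forces the $x_j$ to lie with $N$ in some $(i+1)$-space $P_N$ depending on $N$. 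Assembling these varying local conclusions into the single global statement ``$T$ is a pencil between an $(i-1)$-space and an $(i+1)$-space'' is exactly the hard content, and ``analysing the dichotomy \ldots forces in every branch'' does not do it. The paper's actual resolution is a different and quite delicate argument: it fixes hyperplanes $H,H'$ of $U_0$ avoiding $D=U_0\cap U_1$, takes two points $x,y$ on the line $H^\perp\cap W$, applies the rank-$(r-1)$ induction hypothesis in the \emph{point} residues $\Res_\Delta(x)$, $\Res_\Delta(y)$, and runs a dimension count on the spans $A=\langle A_x,A_y\rangle$ and $A'$ to extract a common $(i-1)$-space $V_0$ of $U_1,\dots,U_s$ and then an explicit opposite through a suitably chosen point $z$. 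Nothing in your sketch substitutes for this.

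A secondary, smaller issue: your case split (``all members share a common $(i-1)$-space'' versus ``they do not'') does not cleanly isolate the configuration where all members of $T$ lie in a common \emph{singular} $(i+1)$-space $W$ without sharing an $(i-1)$-space. The paper disposes of this first (the radical of the form restricted to $W$ then exceeds dimension $i-1$, forcing $W$ singular, after which \cref{An} gives a point of $W$ off every member and one projects from an opposite $(i+1)$-space); in your scheme this configuration falls into the hard case and would have to be recognised and excluded there, which you do not do. So the proposal is a correct skeleton with the technical core missing.
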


\begin{proof}
Let $T$ be a set of $s+1$ singular subspaces of dimension $i$ of the symplectic polar space $\Delta$ of rank $r$ naturally embedded in $\PG(2r-1,s)$, $r\geq 4$. Suppose all members of $T$ are not contained in an $(i+1)$-dimensional subspace of $\PG(2r-1,s)$ if they share a common $(i-1)$-dimensional singular subspace of $\Delta$.  We show that there exists a singular $i$-dimensional subspace of $\Delta$ opposite all members of $T$. We set $T=\{U_0,U_1,\ldots, U_s\}$.

First we assume that all members of $T$ are contained in a common $(i+1)$-dimensional subspace $W$ of $\PG(2r-1,s)$. By our assumption, not all members of $T$ share the same $(i-1)$-dimensional subspace. This implies that the radical of the polar space induced in $W$ has dimension strictly larger than $i-1$; hence $W$ is a singular subspace. Our assumption and \cref{An} imply that we can find a point $x\in W$ not contained in any member of $T$. Let $W'$ be a singular subspace of dimension $i+1$ opposite $W$. Then $x^\perp\cap W'$ is a singular subspace of dimension $i$ opposite each member of $T$.

Henceforth we may assume that not all members of $T$ are contained in the same subspace of $\PG(2r-1,s)$ of dimension $i+1$.
 
 Include every member of $T$ in a singular subspace of dimension $i+1$ in such a way that not all of them are contained in a common subspace of dimension $i+2$ of $\PG(2r-1,s)$. By assumption we can find a subspace $W$  opposite all of these $(i+1)$-dimensional subspaces. Set $\{m_i\}=W\cap U_i^\perp$. If the $m_i$ do not form a line, then \cref{An} yields a singular $i$-space $U\subseteq W$ not containing any of the $m_i$ and hence opposite all members of $T$. Hence we may assume that the $m_i$ form a line $M$. Set $H_i=U_i\cap M^\perp$. Then $H_i$ is a hyperplane of $U_i$. 
 
Assume now that two members of $T$, say $U_0$ and $U_1$, do not share a common $(i-1)$-dimensional singular subspace. Set $D=U_0\cap U_1$. Let $H$ be a hyperplane of $U_0$ not containing $D$, and hence distinct from $H_0$. Then $K:=H^\perp\cap W$ is a line through $m_0$. Pick $x\in K\setminus\{m_0\}$ arbitrarily. Define $U^x_i:=\<x,x^\perp\cap U_i\>$. Suppose there exists an $i$-dimensional singular subspace $U$ through $x$ locally opposite all $U_i^x$. Then, by \cref{lemma}, $U$ is opposite each member of $T$. Our hypotheses imply that the $U_i^x$ are contained in an $(i+1)$-dimensional subspace $A_x$ of $\PG(2r-1,s)$ and all $U_i^x$ share a common $(i-1)$-space $B_x$. The singular subspace $B_x$ contains a hyperplane of $H$ and a hyperplane of $x^\perp\cap U_1$. As $D$ is not contained in $H$, these two hyperplanes do not coincide, and hence they generate $B_x$. Now we do the same construction with $y\in K\setminus\{x,m-)\}$ and obtain the similarly defined subspaces $A_y$ and $B_y$. It is elementary to check that $x^\perp\cap U_i\neq y^\perp\cap U_i$, for $i\in\{1,2,\ldots,s\}$. Hence $A:=\<A_x,A_y\>$ contains $U_i$ for all $i\in\{1,2,\ldots,s\}$.  The intersection $A_x\cap A_y$ contains $H$. Also, since $H$ does not contain $D$, it does not contain  the intersection $x^\perp\cap y^\perp\cap U_1$ (which is a singular subspace of dimension $(i-2)$).  It follows that $A_x$ and $A_y$ share at least a subspace of dimension $i$ (generated by $H$ and $x^\perp\cap y^\perp\cap U_1$). Hence $\dim A\in \{n+1,n+2\}$. Suppose $\dim A=n+1$. Then $A_x=A_y$ and contains the line $\<x,y\>$, which necessarily intersects $U_1$ for dimension reasons. But this contradicts the fact that $W$ is opposite some singular $(i+1)$-space containing $U_1$. Consequently $\dim A=n+2$. We can now do the same thing with another hyperplane $H'$ of $U_0$ not containing $D$ and distinct from $H_0$, and obtain the similarly defined subspace $A'$ of dimension $n+2$ and the line $\<x',y'\>$ of $W$, with $m_0\in\<x',y'\>$. Since both $A$ and $A'$ share the subspace $\<H,U_1\>$ of dimension at least $i+1$, we have $\dim\<A,A'\>\in \<n+2,n+3\>$. If $A=A'$, then $A$ contains the plane $\<x,x',m_0\>$, which necessarily intersects $U_1$ in at least a point since $\dim A=n+2$ and $\dim U_1=i$. This is again a contradiction. It follows that $A_0:=A\cap A'$ has dimension $i+1$ and contains all of $U_1,U_2,\ldots, U_s$, plus $H$ and $m_0$. If $A_0$ were singular, then $m_0=m_1$, a contradiction. Hence $A_0$ is not singular and all $U_i$, $i\in\{1,2,\ldots,s\}$, share a common $(i-1)$-space $V_0$. There is a unique $i$-space $U^*_0$ through $V_0$ in $A_0$ distinct from $U_i$ for any $i\in\{1,2,\ldots,s\}$.     The space $U^*_0$ contains $H$ and $m_0$. 
 
 Now we can find a singular $(i+1)$-space $W^*$ containing $U^*_0$ with the property that not all of its points are collinear to all points of $U_0$. In $W^*$, we can then find a point $z$ not collinear to all points of $U_1$ (because $U_1^\perp$ cuts out a hyperplanes of $W^*$). Since a point of $\Delta$ is collinear to either all points, or a hyperplane of points of $W^*$,  we see that $z^\perp$ intersects each $U_i$ in a hyperplane of $U_i$, and for $i=1,2,\ldots,s$, that hyperplane is necessarily $V_0$. Hence there exists an $i$-space $U$ through $z$ locally opposite the two spaces $\<z,H\>$ and $\<z,V_0\>$ at $z$, and $U$ is opposite each member of $T$ by \cref{lemma}. 
 
 Hence we may assume that each pair in $T$ intersects in an $(i-1)$-space. Then either that $(i-1)$-space is unique, say $V^*$, or all members of $T$ are contained in some $(i+1)$-space, say $W^*$. In the former case, our hypotheses permit to find an $i$-space $U$ through $V^*$ locally opposite each member of $T$, and then the projection $U'$ of $U$ onto a singular subspace opposite $V^*$ is opposite each member of $T$ at $V^*$. In the latter case we are back to the situation of the first paragraph of this proof, which we already handled. 
 
 This completes the proof of the lemma. 
\end{proof}

We can now prove \cref{A} for symplectic polar spaces. 

\begin{prop}\label{patch}
Let $\Gamma$ be a symplectic polar space of rank $r$ at least $2$ and order $(s,s)$. Let $T$ be a set of  $s+1$ singular subspaces of $\Gamma$ of dimension $k\leq r-2$. Then there exists a singular subspace of dimension $k$ opposite each member of $T$, except if either $T$ is a line of the corresponding $k$-Grassmannian geometry, or all members of $T$ contain a given $(k-1)$-dimensional subspace in the residue of which they form a hyperbolic line.
\end{prop}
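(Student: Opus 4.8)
The plan is to obtain \cref{patch} as a corollary of the lemmas already established, by a double induction: an outer induction on the rank $r$, and, within the inductive step, a \emph{downward} induction on the dimension $k$ of the members of $T$. Throughout we regard $\Gamma$ as naturally embedded in $\PG(2r-1,s)$. The base case $r=2$ forces $k=0$, and there the assertion is precisely \cref{quadranglepoints} specialised to $s=t$: the exceptional ``ovoid in a subquadrangle of order $(1,s)$'' appearing in that lemma is exactly a hyperbolic line of $\Gamma$, which, since the residue of the empty subspace is $\Gamma$ itself, is what \cref{patch} calls a hyperbolic line in a residue.

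For the inductive step I assume $r\geq 3$ and that \cref{A} holds for every symplectic polar space of rank at most $r-1$; in view of the cases already disposed of (\cref{polarpoints}, \cref{quadranglepoints}, \cref{quadranglelines}, \cref{MSSgeneral}), this amounts to assuming \cref{patch} itself, and its counterpart for maximal singular subspaces, in all ranks below $r$. Inside $\Gamma$ the dimension $k=0$ is handled by \cref{polarpoints}. The ``top'' dimension $k=r-1$ is not part of the statement, but I need it as an input: since $\Gamma$ is symplectic it is not a parabolic quadric and satisfies $t=s\neq s^2$, so \cref{MSSgeneral} applies and tells us that a set of $s+1$ generators has a common opposite unless they all contain a common $(r-2)$-dimensional singular subspace $U$, in which case, the $s+1$ generators through $U$ all lying in the $r$-space $U^{\perp}$, the set is forced to be the full pencil of generators through $U$. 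This is exactly the hypothesis of \cref{sympi} for the index $i=r-2$, so \cref{sympi} (whose remaining hypothesis ``\cref{A} in rank $\leq r-1$'' is the outer induction hypothesis) yields the case $k=r-2$; feeding this conclusion back into \cref{sympi} with $i=r-3$, then $i=r-4$, and so on, produces the cases $k=r-3,\dots,2$, because the hypothesis of \cref{sympi} at index $i$ is precisely the conclusion it delivered at dimension $i+1$. Finally, with the dimension $2$ now available (from the \cref{sympi} chain when $r\geq 4$, and directly from \cref{MSSgeneral} when $r=3$, where planes are generators), \cref{symplines} delivers the case $k=1$.

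What is left is to identify the exceptional sets. Each of \cref{symplines}, \cref{sympi} and \cref{MSSgeneral} produces as its sole exception a set $T$ of $s+1$ singular $k$-subspaces all containing a common $(k-1)$-dimensional singular subspace $V$ and all contained in a common $(k+1)$-dimensional projective subspace $W$; as the $k$-subspaces lying between $V$ and $W$ form a single pencil of $s+1$ members, $T$ is exactly that pencil. A short dimension count shows $W\subseteq V^{\perp}$ — otherwise $W\cap V^{\perp}$ would be too small to contain two distinct members of $T$ — so $V$ lies in the radical $R$ of the alternating form restricted to $W$; since $W/R$ carries a non-degenerate alternating form, $\dim R\equiv\dim W\pmod 2$, and together with $V\subseteq R\subseteq W$ and $\dim W=\dim V+2$ this forces $R=W$ or $R=V$. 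If $R=W$ then $W$ is singular and the pencil is, by definition, a line of the $k$-Grassmannian; if $R=V$ then every $k$-subspace of $W$ through $V$ is totally isotropic, and, passing to $\Res_{\Gamma}(V)$, the members of $T$ become the points of a non-totally-isotropic projective line, that is, a hyperbolic line of the symplectic polar space $\Res_{\Gamma}(V)$. Conversely each such configuration does occur among the exceptions of those lemmas, and that they genuinely admit no common opposite follows, in the hyperbolic-line case, from the easy half of the point case applied inside $\Res_{\Gamma}(V)$ (a hyperplane meets every line), and in the Grassmann-line case from the even simpler disjointness argument. I do not expect a genuine obstacle here: all the substantive geometry has been carried out in \cref{polarpoints}, \cref{MSSgeneral}, \cref{sympi} and \cref{symplines}, and the only points requiring care are getting the downward induction launched from \cref{MSSgeneral} at $k=r-1$ and performing the translation just described.
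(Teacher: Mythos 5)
Your overall architecture---outer induction on the rank, a downward induction on the dimension launched from the generator case, with \cref{polarpoints}, \cref{symplines} and \cref{sympi} doing the work---is the same as the paper's. But there is a genuine gap at the launching step. You assert that ``since $\Gamma$ is symplectic it is not a parabolic quadric \dots\ so \cref{MSSgeneral} applies''. This fails when $s$ is even: in characteristic~$2$ the symplectic polar space of rank $r$ is isomorphic, as a polar space, to the polar space of a parabolic quadric in $\PG(2r,s)$ (it has order $(s,s)$ and contains fully embedded grids, which is exactly the characterisation invoked in the proof of \cref{MSSgeneral}), so \cref{MSSgeneral} explicitly excludes it. Moreover the conclusion you want from it is simply false there: by \cref{MSSparabolic} (equivalently, case $(4)$ of \cref{A}), a set of $s+1$ generators can also fail to admit a common opposite when its members share only an $(r-3)$-dimensional singular subspace and form a regulus in its residue---a hyperbolic line of the dual polar space---and such a set is not a pencil through a common $(r-2)$-space. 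This is precisely why the paper's own proof of \cref{patch} cites \cref{MSSparabolic} ``in characteristic~$2$'' alongside \cref{MSSgeneral}.

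The knock-on effect is that your verification of the hypothesis of \cref{sympi} at $i=r-2$ (and of \cref{symplines} when $r=3$, where planes are generators) is incomplete: that hypothesis requires the \emph{only} exceptional sets of $(i+1)$-dimensional singular subspaces to be pencils (common $i$-space, common ambient $(i+2)$-space), and for $s$ even the regulus configurations violate this. To repair the argument you must use \cref{MSSparabolic} in place of \cref{MSSgeneral} when $s$ is even and then check that \cref{symplines} and \cref{sympi} still apply with this weaker input---for instance by observing that the auxiliary $(i+1)$-spaces $\alpha_i\supseteq U_i$ in those proofs are chosen with enough freedom to avoid the regulus configuration as well as the pencil one. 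Your treatment of the remaining dimensions and your final translation of the exceptional sets (singular ambient space $W$ gives a Grassmann line; non-singular $W$ gives a hyperbolic line in $\Res_\Gamma(V)$) are correct.
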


\begin{proof}
We prove this by induction on $r$, and for given $r$ we use induction on $r-i$. 

Let first $r=3$. Then $i=1$ and the assertion holds by  \cref{symplines}, \cref{MSSgeneral} and \cref{MSSparabolic} (we need the latter in characteristic $2$). 

Now assume $r\geq 4$. Then the assertion holds for $i\geq 2$ by \cref{sympi}, and it holds for $i=1$ by \cref{symplines}. 

The proof is complete. 
\end{proof}

Now \cref{ovoidcase} yields \cref{B}. 

\end{document}